\newtheorem{Theorem}{Theorem}[section]
\newtheorem{theorem}[Theorem]{Theorem}
\newtheorem{definition}[Theorem]{Definition}
\newtheorem{remark}[Theorem]{Remark}
\newtheorem{example}[Theorem]{Example}
\newtheorem{lemma}[Theorem]{Lemma}
\newtheorem{proposition}[Theorem]{Proposition}
\newtheorem{corollary}[Theorem]{Corollary}
\title[Cohomologies and linear deformations of relative Rota-Baxter operators+]{Cohomologies and linear deformations of relative Rota-Baxter operators on (pre-)Jacobi-Jordan algebras}
\author[N. Oro Djibril and S. Attan ]
{Nabil Oro Djibril and Sylvain Attan }
\address{Nabil Oro Djibril \newline
	Institut de Math\'ematiques et de Sciences Physiques, Universit\'{e} d'Abomey-Calavi
	01 BP 613-Oganla, Porto-Novo, B\'{e}nin}
\email{nabil.orodjibril@imsp-uac.org}
\address{Sylvain Attan \newline
	D\'{e}partement de Math\'{e}matiques, Universit\'{e} d'Abomey-Calavi
	01 BP 4521, Cotonou 01, B\'{e}nin}
\email{syltane2010@yahoo.fr}
\begin{document}
	\maketitle
	\begin{abstract}
		Some results on (pre-)Jacobi-Jordan algebras and their representations are proved. Moreover, the notion of matched pairs and relative Rota-Baxter operators on these algebras are introduced  and studied. The cohomology theory of relative Rota-Baxter operators on (pre)-Jacobi-Jordan algebras is introduced. We use the cohomological approach to study linear  deformations of relative Rota-Baxter operators. In particular, the notion of Nijenhuis elements is introduced
		to characterize trivial linear deformations. 
	\end{abstract}
	{\bf 2020 Mathematics Subject Classification:} 17C50,16W10,17B56 
	17C50,16W10,17B56.
	
	{\bf Keywords:} Jacobi-Jordan algebras, matched pair, relative Rota-Baxter operators, Nijenhuis operators, linear deformations.
	\section{Introduction}
	Commutative algebras that satisfy the Jacobi identity rather than the Jordan identity, are called Jacobi-Jordan algebras \cite{dbaf}. They are motivated by the fact that they constitute an interesting sub-category  of the well-known category of Jordan algebras introduced to explain some aspects of physics \cite{pjjvn}. Roughly speaking, Jacobi-Jordan algebras seem appeared first in \cite{kaz} and since then, different names such as mock-Lie algebras \cite{egmk}, Jordan algebras of nil index 3 \cite{sw}, Lie-Jordan algebras \cite{sonk} are  used for these algebraic structures. One can note that
	there is a close relationship between Jacobi-Jordan algebras and (anti)-associative algebras, indeed, they can be generated by 
	(anti)-associative algebras. Recall that an anti-associative algebra is an
	algebra satisfying the identity $(xy)x+ x(yz)=0$ for all $x,y,z.$.  Thanks to the approach of Eilenberg \cite{nj},  representations of Jacobi-Jordan algebras are introduced in \cite{pzu} whereas  a cohomology and deformation theories of Jacobi-Jordan algebras are developped \cite{aBsBaMsM}. The authors construct a cohomology based on two operators. It is called a zigzag cohomology since it deals with two types of cochains and two sequences of operators. Very close to Jacobi-Jordan algebras are pre-Jacobi-Jordan algebras. These are a  recent structure in the area of non-associative algebras, blending ideas from pre-Lie, Jacobi, and Jordan algebras. They are an important subclass of the class of Jacobi-Jordan-admissible algebras studied in \cite{bbmm} where various constructions of these algebras are provided. In recent years, some researchers have become interested in the study of these algebras. In this sense, the cohomology theory of pre-Jacobi-Jordan algebras is introduced \cite{aod} and is called a zigzag cohomology since the cochain complex consists of two sequences of applications. 
	
		The notion of Rota-Baxter algebras was first introduced by Baxter in his study of the fluctuation theory in
	probability \cite{gb}. Since then, much further work on this notion has been done. Indeed, Baxter's work was further investigated by Rota and Cartier respectively in \cite{gcr},\cite{pc} and many excellent results as regards concerning Rota-Baxter algebras are done by Guo
	and al. \cite{lg,lgwk,lgwk1}. Later, Tang, Bai, Guo and Sheng \cite{tbgs} studied deformation theory and cohomology theory of relative Rota-Baxter operators on Lie algebras, with applications to Rota-Baxter Lie algebras in mind. Next,  relative Rota-Baxter operators on Leibniz algebras were studied in \cite{ysr} and their deformation and cohomology theories were obtained in \cite{tsz}. Observe that the deformation of algebraic structures began with the seminal work of Gerstenhaber \cite{ger1,ger2} for associative algebras and followed by its extension to Lie algebras by Nijenhuis and Richardson \cite{nr1,nr2}. In general, deformation theory was developed for binary quadratic operads by Balavoine \cite{db}. Deformations of morphisms and $\mathcal{O}$-operators (also called relative Rota-Baxter operators) were developed in \cite{abm, das,fz1,fz2,tbgs}. 
	Up to now, there was very few study about (relative) Rota-Baxter operators  on (pre)-Jacobi-Jordan algebras.  Thus it is time  to move in this direction by approaching such a subject. More precisely, we are interested in the study of linear deformations of relative Rota-Baxter operators on (pre-)Jacobi-Jordan algebras using the cohomological approach. For this purpose, we first
	define the cohomology of relative Rota-Baxter operators on these algebras. Then we study
	linear deformations and introduce the notion of  Nijenhuis elements associated to a relative
	Rota Rota-Baxter operator, which can give rise to trivial linear deformations. 	
	
	The outline of the paper is as follows: In section 2, we give some basic notions and concepts, namely some properties of (pre-)Jacobi-Jordan algebras and their representations. Here, the notion of matched pair of Jacobi-Jordan algebras is provided and a necessary and sufiscient condition to obtain it, is given. 
	In Section 3 and Section 4, we first introduce the notion of relative Rota-Baxter operators on (pre-)Jacobi-Jordan algebras with respect to representations. Given a relative Rota-Baxter operator, there is a natural (pre-)Jacobi-Jordan algebra on the representation space. We define the cohomology theory of a relative Rota-Baxter operator on a (pre-)Jacobi-Jordan algebra in terms of the cohomology of the (pre-)Jacobi-Jordan algebra on the representation space. Next, we study the linear deformation theory of relative Rota-Baxter operators on (pre-)Jacobi-Jordan algebras. Moreover, we introduce the notion of  Nijenhuis elements associated to a relative Rota-Baxter operator, which gives rise to a trivial linear deformation of the relative Rota-Baxter operator. Finally, we  build a relationship between linear deformations of relative Rota-Baxter operators and linear deformations of the underlying (pre-)Jacobi-Jordan  algebras.
	
	Throughout this paper, all vector spaces  are finite dimensional and meant over a ground field $\mathbb{K}$ of characteristic 0.
	\section{Basic results on representations of (pre-)Jacobi-Jordan algebras}
	This section is devoted to some results on representations  of (pre-)Jacobi-Jordan algebras. The notion on matched pair of (pre-)Jacobi-Jordan algebra is introduced. The fundamental result says that, given  a matched pair of two left pre-Jacobi-Jordan algebras, a matched pair of their sub-adjacent Jacobi-Jordan algebras can be obtained.
	\subsection{Matched pairs of Jacobi-Jordan algebras}
	\begin{definition}
		Let $(A,\mu)$ be a algebra and 
		$\lambda\in\mathbb{K}.$  Let $R$ be a linear map satisfying 
		\begin{eqnarray}
			\mu(R(x),R(y))=R(\mu(R(x),y)+\mu(x,R(y))+\lambda\mu(x,y)),\ \forall x,y \in A. \label{Rota-Baxt}
		\end{eqnarray}
		Then, $R$ is called a Rota-Baxter operator of weight $\lambda$ and $(A,\mu, R)$ is called a Rota-Baxter algebra of weight 
		$\lambda.$
	\end{definition}
	\begin{definition}
		Let $(A,\ast)$ be an algebra.
		\begin{enumerate}
			\item 	A linear map $J: A^{\otimes 3} A$ defined by $$J(x,y,z):=(x\ast y)\ast z+(y\ast z)\ast x+(z\ast x)\ast y$$
			is called a jacobian of $A$ .
			\item The algebra $(A,\ast)$ is called a Jacobi-Jordan algebra if
			\begin{eqnarray}
				&& x\ast y=y\ast x \mbox{ ( commutativity)},\nonumber\\
				&& J(x,y,z)=0\label{JJi}, \mbox{for all $x,y,z\in A.$ }
			\end{eqnarray}
		\end{enumerate}
	\end{definition}
	\begin{example}
		There is a Jacobi-Jordan algebra $(A,\ast)$     
		\cite{dbaf} where non-zero products with respect to a basis $(e_1,e_2,e_3,e_4)$ are given by: $e_1\ast e_1:=e_2;\ e_1\ast e_4=e_4\ast e_1:=e_4.$
	\end{example}
	\begin{proposition}
		Let $(A, \ast)$ be a Jacobi-Jordan algebra and $(L,\bullet)$ be a commutative associative algebra. Then  $(A\otimes L, \circ)$ is a Jacobi-Jordan algebra where
		\begin{eqnarray}
			&& (x\otimes a)\circ(y\otimes b)=(x\ast y)\otimes(a\bullet b)  \mbox{  $\forall x,y\in A$ and $a,b\in L. $}\nonumber
		\end{eqnarray}
	\end{proposition}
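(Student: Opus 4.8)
The plan is to check directly the two defining axioms of a Jacobi-Jordan algebra for the product $\circ$, namely commutativity and the vanishing of the jacobian. Since both conditions are multilinear while $\circ$ is bilinear, it is enough to verify them on elementary tensors $x\otimes a,\ y\otimes b,\ z\otimes c$, as these span $A\otimes L$; the general case then follows by linearity.

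Commutativity is immediate: one computes $(x\otimes a)\circ(y\otimes b)=(x\ast y)\otimes(a\bullet b)$ and $(y\otimes b)\circ(x\otimes a)=(y\ast x)\otimes(b\bullet a)$, and these coincide because $\ast$ is commutative on $A$ and $\bullet$ is commutative on $L$. For the jacobian, I would expand
$$J_\circ(x\otimes a,y\otimes b,z\otimes c)=((x\ast y)\ast z)\otimes((a\bullet b)\bullet c)+((y\ast z)\ast x)\otimes((b\bullet c)\bullet a)+((z\ast x)\ast y)\otimes((c\bullet a)\bullet b).$$
The crucial observation is that, since $(L,\bullet)$ is commutative and associative, the three $L$-factors are all equal to $a\bullet b\bullet c$. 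This allows me to factor out this common element, obtaining $J_\circ(x\otimes a,y\otimes b,z\otimes c)=J_\ast(x,y,z)\otimes(a\bullet b\bullet c)$, which vanishes because $J_\ast(x,y,z)=0$ in the Jacobi-Jordan algebra $(A,\ast)$.

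The argument is essentially a routine verification; the only step requiring a little care is the alignment of the three tensor factors in $L$, which is exactly the point where both the commutativity and the associativity of $(L,\bullet)$ are needed at once. Once these factors are identified, the Jacobi identity of $A$ finishes the proof on elementary tensors, and the spanning argument extends the conclusion to all of $A\otimes L$.
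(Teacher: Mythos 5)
Your proof is correct and follows essentially the same route as the paper's: verify commutativity from that of $\ast$ and $\bullet$, then use the commutativity and associativity of $(L,\bullet)$ to align the three $L$-factors of the jacobian into a common element $(a\bullet b)\bullet c$, factor it out, and conclude by the Jacobi identity $J_\ast(x,y,z)=0$ in $A$. Your explicit remark that multilinearity reduces the check to elementary tensors is a point the paper leaves implicit, but it is the same argument.
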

	\begin{proof}
		First, the commutativity of $\circ$ follows from the one of $\ast$ and $\bullet$. Next, pick $x,y,z\in A$ and $a,b,c\in L.$ Then,
		\begin{eqnarray}
			&& \Big((x\otimes a)\circ(y\otimes b) \Big)\circ (z\otimes c)+
			\Big((y\otimes b)\circ(z\otimes c) \Big)\circ (x\otimes a)+
			\Big((z\otimes c)\circ(x\otimes a) \Big)\circ (y\otimes b)\nonumber\\
			&&=\Big((x\ast y)\otimes(a\bullet b) \Big)\circ (z\otimes c)+
			\Big((y\ast z)\otimes(b\bullet c) \Big)\circ (x\otimes a)\nonumber\\
			&&+
			\Big((z\ast x)\otimes(c\bullet a) \Big)\circ (y\otimes b)
			=\Big((x\ast y)\ast z\Big)\otimes\Big((a\bullet b)\bullet c\Big)\nonumber\\
			&&+
			\Big((y\ast z)\ast x\Big)\otimes\Big((b\bullet c)\bullet a\Big)
			+\Big((z\ast x)\ast y\Big)\otimes\Big((c\bullet a)\bullet b\Big)\nonumber\\
			&&=\Big((x\ast y)\ast z+(y\ast z)\ast x+(z\ast x)\ast y\Big)\otimes\Big((a\bullet b)\bullet c\Big)\nonumber\\
			&& \mbox{  ( by the commutativity  and the associativity of $(A,\bullet)$ )}\nonumber\\
			&&=0 \mbox{ ( by (\ref{JJi}) ).}\nonumber
		\end{eqnarray}
		Hence, $(A\otimes L, \circ)$ is a Jacobi-Jordan algebra.
	\end{proof}
	Now, we give the definition of representations of a Jacobi-Jordan algebra.
	\begin{definition}\cite{pzu}
		A representation of a Jacobi-Jordan algebra $(A, \ast)$  is a couple $(V,\rho)$ where $V$ is a vector space and $\rho: A\rightarrow gl(V)$ is a linear map
		such that the following identity holds:
		\begin{eqnarray}
			\rho(x\ast y)=-\rho(x)\rho(y)-\rho(y))\rho(x) \mbox{  for all $x,y\in A.$ }\label{rHJJ2}
		\end{eqnarray}
	\end{definition}
	\begin{proposition}\label{PEx0}
		Let  $f:(A, \ast)\rightarrow (B, \bullet)$ be a morphism of Jacobi-Jordan algebras. Then $B^f:=(B, \rho)$ is a representation of $(A,\ast)$ where
		$\rho(a)b:=f(a)\bullet b$ for all $(a,b)\in A\times B.$
	\end{proposition}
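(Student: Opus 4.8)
The plan is to verify directly the two conditions in the definition of a representation of $(A,\ast)$: that $\rho$ is a well-defined linear map $A\to gl(B)$, and that it satisfies the defining identity (\ref{rHJJ2}), namely $\rho(x\ast y)=-\rho(x)\rho(y)-\rho(y)\rho(x)$ for all $x,y\in A$.

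First I would check the linearity requirements. For each fixed $a\in A$, the assignment $b\mapsto f(a)\bullet b$ is a linear endomorphism of $B$ by the bilinearity of $\bullet$, so $\rho(a)\in gl(B)$. Likewise, $a\mapsto\rho(a)$ is linear because $f$ is linear and $\bullet$ is bilinear: $\rho(\alpha a_1+\beta a_2)b=f(\alpha a_1+\beta a_2)\bullet b=\alpha\big(f(a_1)\bullet b\big)+\beta\big(f(a_2)\bullet b\big)$. This step is routine.

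The core of the argument is the identity (\ref{rHJJ2}). I would evaluate both sides on an arbitrary $b\in B$. Since $f$ is a morphism of Jacobi-Jordan algebras, $f(x\ast y)=f(x)\bullet f(y)$, so the left-hand side reads $\rho(x\ast y)b=\big(f(x)\bullet f(y)\big)\bullet b$. Unfolding the composition on the right-hand side gives $-\rho(x)\rho(y)b-\rho(y)\rho(x)b=-f(x)\bullet\big(f(y)\bullet b\big)-f(y)\bullet\big(f(x)\bullet b\big)$. Hence the identity to establish reduces to $\big(f(x)\bullet f(y)\big)\bullet b+f(x)\bullet\big(f(y)\bullet b\big)+f(y)\bullet\big(f(x)\bullet b\big)=0$.

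Finally, I would recognize the left-hand side of this last equation as the jacobian of $(B,\bullet)$ evaluated at the triple $\big(f(x),f(y),b\big)$. Using commutativity of $\bullet$, the second term equals $\big(f(y)\bullet b\big)\bullet f(x)$ and the third equals $\big(b\bullet f(x)\big)\bullet f(y)$, so the expression is precisely $J\big(f(x),f(y),b\big)$, which vanishes by the Jacobi-Jordan identity (\ref{JJi}) in $B$. Thus (\ref{rHJJ2}) holds and $B^f$ is a representation of $(A,\ast)$. There is no genuine obstacle here; the only point requiring care is the bookkeeping of commutativity when matching the three terms against the jacobian.
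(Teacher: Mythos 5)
Your proof is correct and follows essentially the same route as the paper: both evaluate $\rho(x\ast y)$ on $b\in B$, use the morphism property to get $(f(x)\bullet f(y))\bullet b$, and then apply the Jacobi-Jordan identity (\ref{JJi}) in $B$ at the triple $(f(x),f(y),b)$ together with commutativity of $\bullet$ to identify the result with $-\rho(x)\rho(y)b-\rho(y)\rho(x)b$. The only difference is cosmetic: you reduce to the vanishing of the jacobian and also record the routine linearity check, whereas the paper transforms the left-hand side step by step.
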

	\begin{proof}
		For all $(x,y)\in A^{\times 2}$ and $b\in B,$ since $f$ is a morphism we have:
		\begin{eqnarray}
			&&\rho(x\ast y)b=f(x\ast y)\bullet b=(f(x)\bullet f(y))\bullet b\nonumber\\
			&&=-(f(y)\bullet b)\bullet f(x)-(b\bullet f(x))\bullet f(y) \mbox{ ( by (\ref{JJi}) )}\nonumber\\
			&&-f(x)\bullet(f(y)\bullet b)-f(y)\bullet(f(x)\bullet b) \mbox{ ( commutativity of $\bullet$ )}\nonumber\\
			&&-\rho(x)\rho(y)b-\rho(y)\rho(x)b.\nonumber
		\end{eqnarray}
		Hence, $\rho(x\ast y)=-\rho(x)\rho(y)-\rho(y)\rho(x).$
	\end{proof}
	Now, using Proposition \ref{PEx0}, we obtain the following example as applications.
	\begin{example}
		\begin{enumerate}
			\item Let $(A,\ast)$ be a Jacobi-Jordan algebra. Define a left multiplication 
			$ L: A\rightarrow gl(A)$  by $L(x)y:=x\ast y$  for all $x, y \in A.$ Then $(A , L)$ is a representation of $(A, \ast),$  called a regular representation.
			\item Let $(A,\ast)$ be a Jacobi-Jordan algebra and $B$ be an ideal of $(A,\ast).$ Then $B$  inherits a structure of representation of $(A,\ast)$ where 
			$\rho(a)b:=a\ast b$ for all $(a,b)\in A\times B.$
		\end{enumerate}
	\end{example}
	It is easy to prove.
	\begin{proposition}
		Let $(V,\rho)$	be a representation of a Jacobi-Jordan algebra $(A,\ast)$. Then $(V^{\ast},\pi)$ is a representation of $(A,\ast)$ (called the dual representation) where
		\begin{eqnarray}
			(\pi(x) f)(v):=f(\rho(x)(v)) \mbox{ $\forall (x,f,v)\in A\times V^{\ast}\times V$.}
		\end{eqnarray}
	\end{proposition}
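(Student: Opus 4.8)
The plan is to verify the defining identity (\ref{rHJJ2}) for $\pi$ directly, by evaluating both sides on an arbitrary covector $f\in V^{\ast}$ and vector $v\in V$. The only structural feature to watch is the order reversal inherent in composing transpose (dual) operators; everything else is an unwinding of definitions together with the hypothesis that $(V,\rho)$ is a representation.

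First I would compute the composites appearing on the right-hand side. By the definition $(\pi(x)g)(w)=g(\rho(x)(w))$, applying it twice gives, for all $f\in V^{\ast}$ and $v\in V$,
\begin{eqnarray}
(\pi(x)\pi(y)f)(v)=(\pi(y)f)(\rho(x)(v))=f\big(\rho(y)\rho(x)(v)\big),\nonumber
\end{eqnarray}
and symmetrically $(\pi(y)\pi(x)f)(v)=f\big(\rho(x)\rho(y)(v)\big)$. Hence
\begin{eqnarray}
\big((-\pi(x)\pi(y)-\pi(y)\pi(x))f\big)(v)=-f\big(\rho(y)\rho(x)(v)\big)-f\big(\rho(x)\rho(y)(v)\big).\nonumber
\end{eqnarray}

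Next I would treat the left-hand side. Using the definition of $\pi$ and then the representation identity (\ref{rHJJ2}) for $\rho$,
\begin{eqnarray}
(\pi(x\ast y)f)(v)=f\big(\rho(x\ast y)(v)\big)=-f\big(\rho(x)\rho(y)(v)\big)-f\big(\rho(y)\rho(x)(v)\big).\nonumber
\end{eqnarray}
Comparing the two displays, the expressions coincide because the operator $\rho(x)\rho(y)+\rho(y)\rho(x)$ is symmetric under interchange of $x$ and $y$, so the order reversal produced by dualization is absorbed harmlessly. As $f$ and $v$ are arbitrary, this yields $\pi(x\ast y)=-\pi(x)\pi(y)-\pi(y)\pi(x)$, which is precisely (\ref{rHJJ2}) for $(V^{\ast},\pi)$.

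The main (and only mild) obstacle is the bookkeeping: one must respect that $\pi(x)\pi(y)$ transposes to $\rho(y)\rho(x)$ rather than to $\rho(x)\rho(y)$. Because the defining relation of a representation involves the symmetric combination $\rho(x)\rho(y)+\rho(y)\rho(x)$, this reversal is exactly what makes the sign-free definition $(\pi(x)f)(v):=f(\rho(x)(v))$ correct; had the representation axiom not been symmetric in this way, a sign twist in the definition of $\pi$ would have been required. Linearity of the assignment $x\mapsto\pi(x)$ and of each $\pi(x)$ on $V^{\ast}$ is immediate from the definition.
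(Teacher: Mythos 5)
Your proof is correct and follows essentially the same route as the paper's: unwind the definition of $\pi$ to see that $(\pi(x)\pi(y)f)(v)=f(\rho(y)\rho(x)(v))$, then invoke the representation identity for $\rho$ and the symmetry of $\rho(x)\rho(y)+\rho(y)\rho(x)$ in $x$ and $y$. Your write-up is in fact cleaner, since the paper's displayed computation contains a typographical slip (writing $f\pi(y)\pi(x)$ where $f\rho(y)\rho(x)$ is meant) that your version avoids.
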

	\begin{proof}
		Let $x,y\in A$, $f\in V^{\ast}$ and $v\in V.$ Then, we have
		\begin{eqnarray}
			&&(\pi(x)\pi(y)f)(v)+(\pi(y)\pi(x) f)(v)\nonumber\\
			&&=
			(f\pi(y)\pi(x))(v)+(f\pi(x)\pi(y))(v)=
			-(\pi(x\ast y)f)(v).\nonumber
		\end{eqnarray}
		Hence, the conclusion follows.
	\end{proof}
	Let recall the following result which is very useful.
	\begin{proposition}\label{spHJJ}\cite{absb}
		Let $(A,\ast)$ be a 
		Jacobi-Jordan algebra. Then $(V, \rho)$ is a representation of $(A, \ast)$ if and only if the direct sum of vector spaces $A\oplus V,$  turns into a 
		Jacobi-Jordan
		algebra with the multiplication defined by
		\begin{eqnarray}
			(x+u)\diamond (y+v):=x\ast y+(\rho(x)v+\rho(y)u).\label{SdpHJJ1}
		\end{eqnarray}
		This Jacobi-Jordan algebra is called the semi-direct product of $A$ with $V$ and is denoted by $A\ltimes V.$
	\end{proposition}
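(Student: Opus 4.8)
The plan is to verify the two defining axioms of a Jacobi-Jordan algebra — commutativity and the vanishing of the jacobian — for the product $\diamond$ on $A\oplus V$, and to show that the representation identity \eqref{rHJJ2} is exactly what is needed, and what is forced, for the jacobian to vanish. Commutativity is the easy half: writing $(x+u)\diamond(y+v)=x\ast y+(\rho(x)v+\rho(y)u)$, the $A$-component is symmetric by commutativity of $\ast$ and the $V$-component $\rho(x)v+\rho(y)u$ is manifestly symmetric in the two arguments, so $\diamond$ is commutative without using any hypothesis on $\rho$.

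For the forward implication I would take arbitrary elements $X=x+u$, $Y=y+v$, $Z=z+w$ and expand $J_\diamond(X,Y,Z)=(X\diamond Y)\diamond Z+(Y\diamond Z)\diamond X+(Z\diamond X)\diamond Y$ by applying \eqref{SdpHJJ1} twice. The result splits along the decomposition $A\oplus V$. The $A$-component collects to $(x\ast y)\ast z+(y\ast z)\ast x+(z\ast x)\ast y=J(x,y,z)$, which vanishes by \eqref{JJi}. The $V$-component I would sort according to which of $u,v,w$ each summand acts on: the coefficient of $w$ is $\rho(x\ast y)+\rho(x)\rho(y)+\rho(y)\rho(x)$, the coefficient of $u$ is $\rho(y\ast z)+\rho(y)\rho(z)+\rho(z)\rho(y)$, and the coefficient of $v$ is $\rho(z\ast x)+\rho(x)\rho(z)+\rho(z)\rho(x)$. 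Each of these is zero by \eqref{rHJJ2}, using commutativity $z\ast x=x\ast z$ to bring the last one into the stated form, so $J_\diamond=0$ and $A\oplus V$ is a Jacobi-Jordan algebra.

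For the converse I would run the same computation on a well-chosen specialization. Setting $u=v=0$ and $z=0$, so that $X=x$ and $Y=y$ lie in $A$ while $Z=w$ lies in $V$, the $A$-component of $J_\diamond$ disappears and the expansion reduces to $\big(\rho(x\ast y)+\rho(x)\rho(y)+\rho(y)\rho(x)\big)w$. Since $A\oplus V$ is assumed to be a Jacobi-Jordan algebra this must vanish for every $w\in V$, which yields $\rho(x\ast y)=-\rho(x)\rho(y)-\rho(y)\rho(x)$, that is \eqref{rHJJ2}; hence $(V,\rho)$ is a representation.

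The computation is entirely bookkeeping, so the only real care needed is organizational: keeping the three double products straight, tracking which of the three vectors $u,v,w$ each $\rho$-term acts on, and remembering to invoke commutativity of $\ast$ so that the $v$-coefficient can be matched against the representation identity exactly as stated. I do not anticipate any conceptual obstacle.
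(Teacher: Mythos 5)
Your proof is correct. Note that the paper itself does not prove this proposition: it is recalled from the reference \cite{absb} and stated without proof, so there is no internal argument to compare against. Your direct verification is exactly the expected one, and it is consistent in style with what the paper does prove, namely the more general matched-pair theorem, whose proof likewise expands the cyclic sum $\circlearrowleft (X\diamond Y)\diamond Z$ on a direct sum, splits it into an $A$-component and a $V$-component, and reads off the equivalence between the Jacobi identity upstairs and the representation-type identities downstairs. Two small remarks: your invocation of commutativity for the coefficient of $v$ is unnecessary, since $\rho(z\ast x)+\rho(z)\rho(x)+\rho(x)\rho(z)=0$ is literally \eqref{rHJJ2} applied to the pair $(z,x)$; and your converse, obtained by specializing to $X=x$, $Y=y$ in $A$ and $Z=w$ in $V$, is precisely the clean way to extract \eqref{rHJJ2} from the Jacobi identity of $\diamond$, so the argument is complete in both directions.
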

	\begin{definition}
		A matched pair of Jacobi-Jordan algebras denoted by $(A, B,\rho_1,\rho_2),$ consists of two Jacobi-Jordan algebras $(A,\ast)$ and 
		$(B,\bullet)$ together with representations $\rho_1: A\rightarrow gl(B)$ and 
		$\rho_2: B\rightarrow gl(A)$  such that   the following conditions hold for all $x,y\in A,$ $a,b\in B$:
		\begin{eqnarray}
			&& \rho_1(x)(a\bullet b)+(\rho_1(x)a)\bullet b+(\rho_1(x)b)\bullet a\nonumber\\
			&&+\rho_1(\rho_2(a)x)b+\rho_1(\rho_2(b)x)a=0,\label{mpHJJ1}\\
			&& \rho_2(a)(x\ast y)+(\rho_2(a)x)\ast y+(\rho_2(a)y)\ast x\nonumber\\
			&&+\rho_2(\rho_1(x)a)y+\rho_2(\rho_1(y)a)x=0.\label{mpHJJ2}
		\end{eqnarray}
	\end{definition}
	\begin{theorem}
		Let $(A,\ast)$ and 
		$(B,\bullet)$ be Jacobi-Jordan algebras. Then,
		$(A,B,\rho_1,\rho_2),$ is a matched pair of Jacobi-Jordan algebras if and only if  $(A\oplus B, \triangleleft)$ is a Jacobi-Jordan algebra where
		\begin{eqnarray}
			(x+a)\triangleleft(y+b)&:=&(x\ast y+\rho_2(a)y+\rho_2(b)x)+(a\bullet b+\rho_1(x)b+\rho_1(y)a) \label{opHJJ1}
		\end{eqnarray}
	\end{theorem}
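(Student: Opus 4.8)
The plan is to check the two defining axioms of a Jacobi-Jordan algebra for $(A\oplus B,\triangleleft)$ separately. Commutativity of $\triangleleft$ costs nothing and needs no hypothesis: interchanging $x+a$ and $y+b$ in (\ref{opHJJ1}) merely permutes the summands, and each summand is left invariant by the commutativity of $\ast$ and of $\bullet$. Hence the entire content of the statement lies in the equivalence between the vanishing of the jacobian $J_{\triangleleft}$ of $\triangleleft$ and the pair of matched-pair conditions (\ref{mpHJJ1})--(\ref{mpHJJ2}). I would set up the notation $J_{\ast}$, $J_{\bullet}$ for the jacobians of the two factors and reserve $J_{\triangleleft}$ for the total space.

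Because $\triangleleft$ is bilinear and $J_{\triangleleft}$ is trilinear (and, for a commutative product, totally symmetric in its three arguments), $J_{\triangleleft}$ vanishes identically on $A\oplus B$ if and only if it vanishes on every homogeneous triple, i.e. on triples each of whose entries lies entirely in $A$ or entirely in $B$. This reduces the verification to the four types $(A,A,A)$, $(A,A,B)$, $(A,B,B)$ and $(B,B,B)$, classified by how many arguments sit in each factor, and I would dispatch them one by one.

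The types $(A,A,A)$ and $(B,B,B)$ are immediate: since $(x+0)\triangleleft(y+0)=x\ast y$, one gets $J_{\triangleleft}(x,y,z)=J_{\ast}(x,y,z)=0$ by (\ref{JJi}), and symmetrically $J_{\bullet}=0$ for $(B,B,B)$. The informative case is $(A,A,B)$: expanding $J_{\triangleleft}(x,y,a)$ with $x,y\in A$, $a\in B$ and sorting the output into its $A$- and $B$-parts, I expect the $A$-part to assemble exactly into $\rho_2(a)(x\ast y)+(\rho_2(a)x)\ast y+(\rho_2(a)y)\ast x+\rho_2(\rho_1(x)a)y+\rho_2(\rho_1(y)a)x$, which is the left-hand side of (\ref{mpHJJ2}), while the $B$-part collapses to $\big(\rho_1(x\ast y)+\rho_1(x)\rho_1(y)+\rho_1(y)\rho_1(x)\big)a$ and vanishes by the representation axiom (\ref{rHJJ2}) for $\rho_1$. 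Thus $J_{\triangleleft}(x,y,a)=0$ is equivalent to (\ref{mpHJJ2}). The type $(A,B,B)$ follows by the evident symmetry $A\leftrightarrow B$, $\ast\leftrightarrow\bullet$, $\rho_1\leftrightarrow\rho_2$ with no fresh computation: its $B$-part reproduces the left-hand side of (\ref{mpHJJ1}) and its $A$-part vanishes by the representation axiom for $\rho_2$.

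Gathering the four cases, $J_{\triangleleft}\equiv 0$ holds precisely when both (\ref{mpHJJ1}) and (\ref{mpHJJ2}) hold, which combined with the automatic commutativity yields the asserted equivalence. I expect the only delicate point to be the bookkeeping in the case $(A,A,B)$: each occurrence of $\triangleleft$ in (\ref{opHJJ1}) splits into an $A$-part and a $B$-part, so the three cyclic summands of the jacobian must be expanded carefully and regrouped, using (\ref{rHJJ2}) to recognize the composites $\rho_1(x)\rho_1(y)$ and the operator-valued term $\rho_2(\rho_1(x)a)$, together with a harmless appeal to commutativity of $\ast$ to match $(\rho_2(a)y)\ast x$ with the ordering displayed in (\ref{mpHJJ2}). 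Once the regrouping is organized cleanly on this one mixed type, the remaining mixed type demands nothing beyond invoking the symmetry.
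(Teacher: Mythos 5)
Your proof is correct, and it organizes the verification genuinely differently from the paper. The paper proceeds by one monolithic computation: it expands the full cyclic sum $\circlearrowleft_{(x+a,y+b,z+c)}\big((x+a)\triangleleft(y+b)\big)\triangleleft(z+c)$ for arbitrary inhomogeneous elements, cancels all blocks of the form $J_{\ast}$, $J_{\bullet}$ and $\rho_2(a\bullet b)z+\rho_2(a)\rho_2(b)z+\rho_2(b)\rho_2(a)z$ (and their analogues) using (\ref{JJi}) and (\ref{rHJJ2}), and is left with six blocks, three instances of the left-hand side of (\ref{mpHJJ2}) and three of (\ref{mpHJJ1}), from which it reads off the equivalence. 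You instead reduce first, by trilinearity and the total symmetry of the jacobian of a commutative product, to homogeneous triples, so that only the four types $(A,A,A)$, $(A,A,B)$, $(A,B,B)$, $(B,B,B)$ need checking; the two pure types are trivial, only $(A,A,B)$ is computed by hand (your identification of its $A$-part with the left-hand side of (\ref{mpHJJ2}) and of its $B$-part with $\big(\rho_1(x\ast y)+\rho_1(x)\rho_1(y)+\rho_1(y)\rho_1(x)\big)a$ is exactly right), and $(A,B,B)$ follows from the manifest $A\leftrightarrow B$ symmetry of (\ref{opHJJ1}). Your organization buys two things: far less bookkeeping, and a cleaner ``only if'' direction --- the paper's final step, passing from the vanishing of the \emph{sum} of six blocks for all $x,y,z,a,b,c$ to the vanishing of each individual block, tacitly requires specializing some variables to $0$, which is precisely the homogeneous reduction you perform at the outset. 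One further remark: since the $A$-part and the $B$-part of $J_{\triangleleft}(x,y,a)$ lie in different summands of $A\oplus B$, both must vanish separately; hence in the converse direction the Jacobi identity on $A\oplus B$ actually \emph{forces} the representation axiom (\ref{rHJJ2}) for $\rho_1$ (and, symmetrically, for $\rho_2$) rather than needing it as a hypothesis. Neither you nor the paper exploits this --- both treat the representation property as a standing assumption when cancelling the $B$-part --- so nothing is at stake for the equivalence as stated, but your decomposition makes this strengthening visible essentially for free.
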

	\begin{proof} First, the commutativity of $\triangleleft$ is equivalent to the one of $\ast$ and $\bullet$. Next, for all $x,y,z\in A$ and $a,b,c\in B,$ we compute
		\begin{eqnarray}
			&&\circlearrowleft_{(x+a,y+b,z+c)}\Big((x+b)\triangleleft(y+b) \Big)\triangleleft(z+c)\nonumber\\
			&&=
			\circlearrowleft_{(x+a,y+b,z+c)}\Big( (x\ast y)\ast z+(\rho_2(a)y)\ast z+(\rho_2(b)x)\ast z+\rho_2(a\bullet b)z\nonumber\\
			&&+\rho_2(\rho_1(x)b)z+\rho_2(\rho_1(y)a)z+\rho_2(c)(x\ast y)+\rho_2(c)\rho_2(a)y+\rho_2(c)\rho_2(b)x\Big)\nonumber\\
			&&+\circlearrowleft_{(x+a,y+b,z+c)}\Big( (a\bullet b)\bullet c+(\rho_1(x)b)\bullet c+(\rho_1(y)a)\bullet c+\rho_1(x\ast y)c\nonumber\\
			&&+\rho_1(\rho_2(a)y)c+\rho_1(\rho_2(b)x)c+\rho_1(z)(a\bullet b)+\rho_1(z)\rho_1(x)b+\rho_1(z)\rho_1(y)a\Big)\nonumber
		\end{eqnarray}
		\begin{eqnarray}
			&&=\Big( \circlearrowleft_{(x,y,z)}(x\ast y)\ast z\Big)+\Big( \rho_2(a\bullet b)z+\rho_2(a)\rho_2(b)z+\rho_2(b)\rho_2(a)z\Big)\nonumber\\
			&&+\Big( \rho_2(b\bullet c)x+\rho_2(b)\rho_2(c)x+\rho_2(c)\rho_2(b)x\Big)+
			\Big( \rho_2(c\bullet a)y+\rho_2(c)\rho_2(a)y\nonumber\\
			&&+\rho_2(a)\rho_2(c)y\Big)+\Big( \rho_2(c)(x\ast y)+(\rho_2(c)x)\ast y+(\rho_2(c)y)\ast x +\rho_2(\rho_1(x)c)y\nonumber\\
			&&+\rho_2(\rho_1(y)c)x\Big)
			+\Big( \rho_2(b)(z\ast x)+(\rho_2(b)z)\ast x+(\rho_2(b)x)\ast z +\rho_2(\rho_1(z)b)x\nonumber\\
			&&+\rho_2(\rho_1(x)b)z\Big)
			+\Big( \rho_2(a)(y\ast z)+(\rho_2(a)y)\ast z+(\rho_2(a)z)\ast y +\rho_2(\rho_1(y)a)z\nonumber\\
			&&+\rho_2(\rho_1(z)a)y\Big)+\Big( \circlearrowleft_{(a,b,c)}(a\bullet b)\bullet c\Big)+\Big( \rho_1(x\ast y)c+\rho_1(x)\rho_1(y)c+\rho_1(y)\rho_1(x)c\Big)\nonumber\\
			&&+\Big( \rho_1(y\ast z)a+\rho_1(y)\rho_1(z)a+\rho_1(z)\rho_1(y)a\Big)+
			\Big( \rho_1(z\ast x)b+\rho_1(z)\rho_1(x)b\nonumber\\
			&&+\rho_1(x)\rho_1(z)b\Big)+\Big( \rho_1(z)(a\bullet b)+(\rho_1(z)a)\bullet b+(\rho_1(z)b)\bullet a +\rho_1(\rho_2(a)z)b\nonumber\\
			&&+\rho_1(\rho_2(b)z)a\Big)
			+\Big( \rho_1(y)(c\bullet a)+(\rho_1(y)c)\bullet a+(\rho_1(y)a)\bullet c +\rho_1(\rho_2(c)y)a\nonumber\\
			&&+\rho_1(\rho_2(a)y)c\Big)
			+\Big( \rho_1(x)(b\bullet c)+(\rho_1(x)b)\bullet c+(\rho_1(x)c)\bullet b +\rho_1(\rho_2(b)x)c\nonumber\\
			&&+\rho_1(\rho_2(c)x)b\Big).\nonumber
		\end{eqnarray}
		Therefore, by (\ref{JJi}) and (\ref{rHJJ2}), we have
		\begin{eqnarray}
			&&\circlearrowleft_{(x+a,y+b,z+c)}\Big((x+b)\triangleleft(y+b) \Big)\triangleleft(z+c)\nonumber\\
			&&=\Big( \rho_2(c)(x\ast y)+(\rho_2(c)x)\ast y+(\rho_2(c)y)\ast x +\rho_2(\rho_1(x)c)y+\rho_2(\rho_1(y)c)x\Big)\nonumber\\
			&&+\Big( \rho_2(b)(z\ast x)+(\rho_2(b)z)\ast x+(\rho_2(b)x)\ast z +\rho_2(\rho_1(z)b)x
			+\rho_2(\rho_1(x)b) z\Big)\nonumber\\
			&&+\Big( \rho_2(a)(y\ast z)+(\rho_2(a)y)\ast z+(\rho_2(a)z)\ast y +\rho_2(\rho_1(y)a)z
			+\rho_2(\rho_1(z)a)y\Big)\nonumber\\
			&&+\Big( \rho_1(z)(a\bullet b)+(\rho_1(z)a)\bullet b+(\rho_1(z)b)\bullet a +\rho_1(\rho_2(a)z)b
			+\rho_1(\rho_2(b)z)a\Big)\nonumber\\
			&&+\Big( \rho_1(y)(c\bullet a)+(\rho_1(y)c)\bullet a+(\rho_1(y)a)\bullet c +\rho_1(\rho_2(c)y)a
			+\rho_1(\rho_2(a)y)c\Big)\nonumber\\
			&&+\Big( \rho_1(x)(b\bullet c)+(\rho_1(x)b)\bullet c+
			(\rho_1(x)c)\bullet b +\rho_1(\rho_2(b)x)c+\rho_1(\rho_2(c)x)b\Big).\nonumber
		\end{eqnarray}
		Hence, (\ref{JJi}) is satisfied in $A\oplus B$ if and only if (\ref{mpHJJ1}) and (\ref{mpHJJ2}) hold.
	\end{proof}
\subsection{Matched pairs of pre-Jacobi-Jordan algebras}
\begin{definition}
	Let $(A,\cdot)$ be an algebra.
	\begin{enumerate}
		\item  The anti-associator of $(A,\cdot)$ is the map defined by
		\begin{eqnarray}
			Aasso(x,y,z):=(x\cdot y)\cdot z+x\cdot(y\cdot z) 
			\mbox{ for all $x,y,z\in A.$} \label{antiHas}
		\end{eqnarray}
		\item A left pre-Jacobi-Jordan (left skew-symmetric) algebra  is an algebra $(A,\cdot)$ satisfying
		\begin{eqnarray}
			&& Aasso(x,y,z)=-Aasso(y,x,z) \mbox{ for all $x,y,z\in A.$} \label{HpJJi}
		\end{eqnarray}
		i.e., the anti-associator is left skew-symmetric. Actually, (\ref{HpJJi}) is equivalent to
		\begin{eqnarray}
			(x\ast y)\cdot z=-x\cdot(y\cdot z)-y\cdot(x\cdot z)
			\mbox{ for all $x,y,z\in A.$} \label{HpJJi2}
		\end{eqnarray}
		where $x\ast y=x\cdot y+y\cdot x$ for all 
		$x,y\in A.$
		\item If the anti-associator is right skew-symmetric i.e., 
		\begin{eqnarray}
			Aasso(x,y,z)=-Aasso(x,z,y)\label{HpJJir}
		\end{eqnarray}
		or equivalently
		\begin{eqnarray}
			x\cdot(y\ast z)=-(x\cdot y)\cdot z-(x\cdot z)\cdot y \mbox{ for all $x,y,z\in A,$}\nonumber
		\end{eqnarray}
		then, the algebra is said to be a right pre-Jacobi-Jordan ( right skew symmetric) algebra.	
	\end{enumerate}
\end{definition}
\begin{proposition}\label{lpHJJHJJ}
	Let $(A,\cdot)$ be a left  pre-Jacobi-Jordan  algebra. Then the product given by
	\begin{eqnarray}
		x\ast y=x\cdot y+y\cdot x \label{asHJJ}
	\end{eqnarray}
	defines a Jacobi-Jordan algebra structure on $A,$ which is called the associated (or sub-adjacent) Jacobi-Jordan algebra
	of $(A,\cdot)$ denoted by $A^C$.
\end{proposition}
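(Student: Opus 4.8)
The plan is to verify the two defining axioms of a Jacobi-Jordan algebra for $(A,\ast)$: commutativity and the vanishing of the jacobian $J$. Commutativity is immediate, since $x\ast y = x\cdot y + y\cdot x$ is visibly symmetric in $x$ and $y$, so $x\ast y = y\ast x$ with no appeal to the pre-Jacobi-Jordan hypothesis whatsoever.

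The substance lies in the Jacobi identity $J(x,y,z) = (x\ast y)\ast z + (y\ast z)\ast x + (z\ast x)\ast y = 0$. First I would expand a single term. Writing $(x\ast y)\ast z = (x\ast y)\cdot z + z\cdot(x\ast y)$ and applying the left skew-symmetry in the form (\ref{HpJJi2}) to the first summand yields $(x\ast y)\cdot z = -x\cdot(y\cdot z) - y\cdot(x\cdot z)$, while the second summand expands directly from the definition of $\ast$ as $z\cdot(x\ast y) = z\cdot(x\cdot y) + z\cdot(y\cdot x)$. Thus each of the three terms in $J$ resolves into a sum of four monomials in the $\cdot$-product.

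Next I would form the cyclic sum over $(x,y,z)$ and collect the twelve resulting monomials. The key observation is that every monomial of the shape $u\cdot(v\cdot w)$ occurs exactly twice, with opposite signs: for instance $x\cdot(y\cdot z)$ appears with a minus sign in the expansion of $(x\ast y)\ast z$ and with a plus sign in that of $(y\ast z)\ast x$, and the analogous pairing holds for each of the remaining five shapes. Hence all twelve monomials cancel in pairs and $J(x,y,z)=0$, establishing that $(A,\ast)$ is a Jacobi-Jordan algebra.

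This is essentially bookkeeping rather than a genuine obstacle; the only point demanding care is the correct application of (\ref{HpJJi2}), which presupposes that the left argument of the outer $\cdot$-product be presented as a $\ast$-product so that the left skew-symmetry can be invoked. It is worth noting that this is precisely the mechanism converting the left skew-symmetry of the anti-associator into the Jacobi identity for the symmetrized product, so that no hypothesis beyond (\ref{HpJJi2}) enters the argument.
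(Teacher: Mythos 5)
Your proof is correct and follows essentially the same route as the paper's: both verify commutativity trivially and then kill the cyclic sum $J(x,y,z)$ by a direct expansion into twelve $\cdot$-monomials governed by the left skew-symmetry hypothesis. The only cosmetic difference is that you invoke the hypothesis in its rewritten form (\ref{HpJJi2}) at the outset and cancel the monomials in pairs, whereas the paper expands everything with positive signs and regroups the cyclic sum as $\circlearrowleft_{(x,y,z)}\Big(Aasso(x,y,z)+Aasso(y,x,z)\Big)$, which vanishes by (\ref{HpJJi}).
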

\begin{proof}
	For all $x,y,z\in A,$ we prove (\ref{JJi}) as follows
	\begin{eqnarray}
		&&J(x,y,z)=\circlearrowleft_{(x,y,z)}(x\ast y)\ast z\nonumber\\
		&&=\circlearrowleft_{(x,y,z)}
		\Big((x\cdot y)\cdot z
		+(y\cdot x)\cdot z+z\cdot(x\cdot y)+z\cdot(y\cdot x) \Big)\nonumber\\
		&&=\circlearrowleft_{(x,y,z)}\Big(Aasso(x,y,z)+Aasso(y,x,z) \Big)=0 \mbox{ ( by (\ref{HpJJi}) ).}\nonumber
	\end{eqnarray}
\end{proof}
	\begin{definition}\cite{aod}\label{rpHJJcl}
		A representation of a left pre-Jacobi-Jordan algebra $(A,\cdot)$ on a vector space $V$ consists of a pair $(\rho, \mu),$ where $\rho,\mu: A\rightarrow gl(V )$  are linear maps satisfying:
		\begin{eqnarray}
			&&\rho(x\ast y)=-\rho(x)\rho(y)-\rho(y)\rho(x),
			\label{7}\\
			&&\mu(y)\mu(x)+\mu(x\cdot y)=-\mu(y)\rho(x)-\rho(x)\mu(y), \label{8}
		\end{eqnarray}
		for all $x,y\in A$ where $x\ast y:=x\cdot y+y\cdot x.$
	\end{definition}
	Observe that, Condition (\ref{7}) means that $(V,\rho)$ is a representation of the subadjacent Jacobi-Jordan of $(A,\cdot).$ One can easily prove that $(V,\rho,\mu)$  is a representation of a left pre-Jacobi-Jordan algebra $(A,\cdot)$ if and only if the direct sum $A\oplus V$ of vector spaces turns into a left pre-Jacobi-Jordan algebra under the product
	\begin{eqnarray}
		(x+u)\circledast (y+v):=x\cdot y+(\rho(x)v+\mu(y)u) \mbox{ for all $x,y\in A$ 
			and $u,v\in V.$}\label{spjj}
	\end{eqnarray}
	This left pre-Jacobi-Jordan algebra is called a semi-direct product of $A$ and $V$  denoted  by $A\ltimes V.$
	
	It is easy to prove the following.
	
	\begin{proposition}\label{sumrl}
		Let $(V, \rho,\mu)$ be a representation of a left pre-Jacobi-Jordan algebra $(A,\cdot)$ and $(A,\ast)$
		be its associated Jacobi-Jordan algebra. Then
		$(V,\rho+\mu)$ is a representation of $(A,\ast),$
	\end{proposition}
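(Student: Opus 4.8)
The plan is to verify directly that the single operator $\pi := \rho + \mu$ satisfies the defining identity (\ref{rHJJ2}) of a representation of the associated Jacobi-Jordan algebra $(A,\ast)$, namely $\pi(x\ast y) = -\pi(x)\pi(y) - \pi(y)\pi(x)$ for all $x,y\in A$. Since everything reduces to linear combinations of compositions of $\rho$ and $\mu$, no conceptual difficulty arises; the proof is a matter of expanding both sides and matching terms, which is why the statement is flagged as easy.

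First I would expand the left-hand side as $\pi(x\ast y) = \rho(x\ast y) + \mu(x\ast y)$. The term $\rho(x\ast y)$ is handled immediately by (\ref{7}), which gives $\rho(x\ast y) = -\rho(x)\rho(y) - \rho(y)\rho(x)$. For the term $\mu(x\ast y)$, I would invoke the relation $x\ast y = x\cdot y + y\cdot x$ from (\ref{asHJJ}) to write $\mu(x\ast y) = \mu(x\cdot y) + \mu(y\cdot x)$, and then apply (\ref{8}) to each summand. Solving (\ref{8}) for $\mu(x\cdot y)$ yields $\mu(x\cdot y) = -\mu(y)\mu(x) - \mu(y)\rho(x) - \rho(x)\mu(y)$, and interchanging the roles of $x$ and $y$ gives the analogous expression for $\mu(y\cdot x)$.

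Next I would expand the right-hand side $-\pi(x)\pi(y) - \pi(y)\pi(x)$ by substituting $\pi = \rho + \mu$ and distributing, producing eight summands: the pure and mixed products $\rho(x)\rho(y)$, $\rho(x)\mu(y)$, $\mu(x)\rho(y)$, $\mu(x)\mu(y)$ from the first composition, together with their transposes from the second. Collecting the left-hand side from the previous step also produces exactly eight summands. A term-by-term comparison then shows the two sides agree: the $\rho\rho$ terms are supplied by (\ref{7}), the $\mu\mu$ and mixed $\rho\mu$ terms by the two instances of (\ref{8}), and all signs align.

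The only point requiring care is the bookkeeping of the four mixed products: one must track which instance of (\ref{8}) contributes each of $\mu(y)\rho(x)$, $\rho(x)\mu(y)$, $\mu(x)\rho(y)$ and $\rho(y)\mu(x)$, and confirm that the symmetrized combination on the right recovers all of them with the correct sign. This is the step most likely to conceal a sign slip, but there is no genuine obstacle, and the identity (\ref{rHJJ2}) closes, establishing that $(V,\rho+\mu)$ is a representation of $(A,\ast)$.
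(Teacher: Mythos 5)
Your proof is correct, but it takes a genuinely different route from the paper. You verify the representation identity (\ref{rHJJ2}) for $\pi=\rho+\mu$ by brute force: expand $\pi(x\ast y)=\rho(x\ast y)+\mu(x\cdot y)+\mu(y\cdot x)$, apply (\ref{7}) once and (\ref{8}) twice (once as stated, once with $x$ and $y$ interchanged), and match the resulting eight terms against the eight terms of $-\pi(x)\pi(y)-\pi(y)\pi(x)$; the bookkeeping you describe does close, with every mixed term $\rho(x)\mu(y)$, $\mu(y)\rho(x)$, $\mu(x)\rho(y)$, $\rho(y)\mu(x)$ appearing exactly once on each side with a minus sign. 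The paper instead argues structurally, with no term-matching at all: since $(V,\rho,\mu)$ is a representation of $(A,\cdot)$, the semi-direct product $A\ltimes V=(A\oplus V,\circledast)$ of (\ref{spjj}) is a left pre-Jacobi-Jordan algebra; symmetrizing $\circledast$ (Proposition \ref{lpHJJHJJ}) yields a Jacobi-Jordan algebra whose product is computed in one line to be
$(x+u)\star(y+v)=x\ast y+\big((\rho+\mu)(x)v+(\rho+\mu)(y)u\big)$,
which is precisely the semi-direct product form (\ref{SdpHJJ1}) for the pair $\big((A,\ast),(V,\rho+\mu)\big)$; Proposition \ref{spHJJ}, the if-and-only-if characterization of representations via semi-direct products, then gives the conclusion. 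Your approach is more elementary and self-contained (it uses only the defining identities), while the paper's approach trades the computation for reuse of already-established machinery and makes the conceptual reason visible: symmetrization of the pre-Jacobi-Jordan semi-direct product is the Jacobi-Jordan semi-direct product of the symmetrized data. Both proofs are complete and valid.
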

	\begin{proof}
		Let $(V, \rho,\mu)$ be a representation of a left pre-Jacobi-Jordan algebra $(A,\cdot)$ and $(A,\ast)$ be its associated Jacobi-Jordan algebra.
		
		We know that $A\ltimes V=(A\oplus V,\circledast)$
		is a left pre-Jacobi-Jordan algebra where $\circledast$ is defined as (\ref{spjj}). Consider its associated Jacobi-Jordan algebra $(A\oplus V,\star),$
		where $\star$ is a Jordan product using $\circledast$. Then,	we have
		\begin{eqnarray}
			&&(x+u)\star (y+v)=(x+u)\circledast (y+v)+
			(y+v)\circledast (x+u)\nonumber\\
			&&=x\cdot y+(\rho(x)v+\mu(y)u)+
			y\cdot x+(\rho(y)u+\mu(x)v)\nonumber\\
			&&=x\ast y+\Big((\rho+\mu)(x)v+(\rho+\mu)(y)u \Big).\nonumber
		\end{eqnarray}
		Thanks to Proposition \ref{spHJJ}, we deduce that $(V,\rho+\mu)$ is a representation of $(A,\ast).$
	\end{proof}
	\begin{definition}
		A matched pair of left pre-Jacobi-Jordan algebras denoted by $(A_1,A_2,\rho_1,
		\mu_1,\rho_2,\mu_2),$ consists of two left pre-Jacobi-Jordan algebras $\mathcal{A}_1:=(A_1,\cdot)$ and 
		$\mathcal{A}_2:=(A_2,\top)$ together with representations $\rho_1, \mu_1: A_1\rightarrow gl(A_2)$ and 
		$\rho_2, \mu_2: A_2\rightarrow gl(A_1)$  such that the following conditions hold for all $x,y,z\in A_1,$ $a,b,c\in A_2:$  
		\begin{eqnarray}
			\rho_1(x)(a\intercal b)&=&-\rho_1(\rho_2(a)x+\mu_2(a)x)b-(\rho_1(x)a+
			\mu_1(x)a)\intercal b\nonumber\\
			&&-\mu_1(\mu_2(b)x)a-a\intercal(\rho_1(x)b),\label{mppHJJ1}\\
			\mu_1(x)(a\circledast b)&=&-a\intercal(\mu_1(x)b)-b\intercal(\mu_1(x)a)
			-\mu_1(\rho_2(a)x)b\nonumber\\
			&&-\mu_1(\rho_2(b)x)a,\label{mppHJJ2}\\
			\rho_2(a)(x\cdot y)&=&-\rho_2(\rho_1(x)a+\mu_1(x)a)y-(\rho_2(a)x+
			\mu_2(a)x)\cdot y\nonumber\\
			&&-\mu_2(\mu_1(y)a)x-x\cdot(\rho_2(a)y),\label{mppHJJ3}\\
			\mu_2(a)(x\ast y)&=&-x\cdot(\mu_2(a)
			y)-y\cdot(\mu_2(a)x)
			-\mu_2(\rho_1(x)a)y\nonumber\\
			&&-\mu_2(\rho_1(y)a)x,\label{mppHJJ4}
		\end{eqnarray}
		where $\ast$ is a product of the sub-adjacent Jacobi-Jordan algebra $A_1^C$ and $\circledast$ is a product of the sub-adjacent Jacobi-Jordan algebra $A_2^C.$
	\end{definition}
	\begin{theorem}
		Let $\mathcal{A}_1:=(A_1,\cdot)$ and 
		$\mathcal{A}_2:=(A_2,\top)$ be left pre-Jacobi-Jordan algebras. Then,
		$(A_1,A_2,\rho_1,\mu_1,\rho_2,\mu_2),$ is a matched pair of left pre-Jacobi-Jordan algebras if and only if  $(A_1\oplus A_2, \diamond)$ is a left pre-Jacobi-Jordan algebra where
		\begin{eqnarray}
			(x+a)\diamond(y+b)&:=&(x\cdot y+\rho_2(a)y+\mu_2(b)x)+(a\top b+\rho_1(x)b+\mu_1(y)a). \label{oppHJJ1}
		\end{eqnarray}
	\end{theorem}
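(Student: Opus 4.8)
The plan is to verify the defining identity of a left pre-Jacobi-Jordan algebra, namely the left skew-symmetry of the anti-associator $Aasso(u,v,w)+Aasso(v,u,w)=0$ (see (\ref{HpJJi})), for the product $\diamond$ given in (\ref{oppHJJ1}), and to show that it reduces exactly to the four matched pair axioms (\ref{mppHJJ1})--(\ref{mppHJJ4}). First I would fix $u=x+a$, $v=y+b$, $w=z+c$ with $x,y,z\in A_1$ and $a,b,c\in A_2$, and expand $(u\diamond v)\diamond w$ and $u\diamond(v\diamond w)$ using (\ref{oppHJJ1}). Since $\diamond$ respects the decomposition $A_1\oplus A_2$ (the $A_1$-part of a product stays in $A_1$ and likewise for $A_2$), the whole identity splits into an $A_1$-valued part and an $A_2$-valued part, which may be treated independently.

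Next I would collect the terms of the $A_1$-component of $Aasso(u,v,w)+Aasso(v,u,w)$. I expect them to organize into three groups. The purely $A_1$ terms $(x\cdot y)\cdot z+x\cdot(y\cdot z)+(y\cdot x)\cdot z+y\cdot(x\cdot z)$ equal $Aasso(x,y,z)+Aasso(y,x,z)$ and vanish because $\mathcal{A}_1$ is left pre-Jacobi-Jordan. The terms carrying a single representation $\rho_2$ or $\mu_2$ acting on a product of $A_2$, together with the associated compositions $\rho_2(a)\rho_2(b)$, $\mu_2(a)\mu_2(b)$ and the like, cancel by the representation axioms (\ref{7}) and (\ref{8}) applied to $(\rho_2,\mu_2)$ as a representation of $A_2$ on $A_1$ (using $a\circledast b=a\top b+b\top a$). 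The remaining cross terms should regroup precisely into the left-hand sides of (\ref{mppHJJ3}) and (\ref{mppHJJ4}). By the symmetry of the construction under interchanging the two factors ($A_1\leftrightarrow A_2$, $\rho_1\leftrightarrow\rho_2$, $\mu_1\leftrightarrow\mu_2$), the $A_2$-component is handled the same way and collapses to (\ref{mppHJJ1}) and (\ref{mppHJJ2}).

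With this bookkeeping done, both implications follow at once. For the ``if'' direction, assuming (\ref{mppHJJ1})--(\ref{mppHJJ4}) forces every surviving group to vanish, so $\diamond$ satisfies (\ref{HpJJi}) and $(A_1\oplus A_2,\diamond)$ is a left pre-Jacobi-Jordan algebra. For the ``only if'' direction, each axiom is recovered by specializing the inputs: since the identity holds for all $u,v,w$ and is multilinear in the six independent variables $x,y,z,a,b,c$, one switches on only the variables occurring in a given axiom (setting the others to zero) to isolate (\ref{mppHJJ3}), (\ref{mppHJJ4}) from the $A_1$-part and (\ref{mppHJJ1}), (\ref{mppHJJ2}) from the $A_2$-part. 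The main obstacle is purely the bookkeeping: because the anti-associator is skew only in its first two slots (unlike the fully cyclic Jacobian used in the Jacobi-Jordan case), I cannot exploit cyclic symmetry and must track each of the eighteen terms in $Aasso(u,v,w)$ together with its counterpart in $Aasso(v,u,w)$ by hand, matching them correctly against the pre-Jacobi-Jordan identity, the two representation axioms, and the two relevant matched pair conditions.
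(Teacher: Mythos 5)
Your proposal is correct and follows essentially the same route as the paper's proof: expand the symmetrized anti-associator $Aasso_{A_1\oplus A_2}(x+a,y+b,z+c)+Aasso_{A_1\oplus A_2}(y+b,x+a,z+c)$, cancel the pure $A_1$- and $A_2$-blocks by (\ref{HpJJi}) and the mixed blocks by the representation axioms (\ref{7})--(\ref{8}), and identify the surviving groups with the left-hand sides of (\ref{mppHJJ1})--(\ref{mppHJJ4}), with the $A_1$-component yielding (\ref{mppHJJ3})--(\ref{mppHJJ4}) and the $A_2$-component yielding (\ref{mppHJJ1})--(\ref{mppHJJ2}). Your explicit multilinearity/specialization argument for recovering each individual axiom in the ``only if'' direction is a detail the paper leaves implicit, but it is the natural completion of the same computation rather than a different method.
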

	\begin{proof} Let $x,y,z\in A_1$ and $a,b,\in A_2$, then by straightforward computations
		\begin{eqnarray}
			&& Aasso_{A_1\oplus A_2}(x+a,y+b,z+c)\nonumber\\
			&&=((x+a)\diamond(y+b))\diamond(z+c)+
			(x+a)\diamond((y+b)\diamond(z+c))\nonumber\\
			&&=(x\cdot y)\cdot z+(\rho_2(a)y)\cdot z+(\mu_2(b)x)\cdot z+\rho_2(a\intercal b)z+\rho_2(\rho_1(x)b)z\nonumber\\
			&&+\rho_2(\mu_1(y)a)z+\mu_2(c)(x\cdot y)+\mu_2(c)\rho_2(a)y+\mu_2(c)\mu_2(b)x+(a\intercal b)\intercal c\nonumber\\
			&&+(\rho_1(x)b)\intercal c+(\mu_1(y)a)\intercal c+\rho_1(x\cdot y)c+\rho_1(\rho_2(a)y)c
			+\rho_1(\mu_2(b)x)c\nonumber\\
			&&+\mu_1(z)(a\intercal b)+\mu_1(z)\rho_1(x)b+\mu_1(z)\mu_1(y)a
			+x\cdot(y\cdot z)+x\cdot(\rho_2(b)z)\nonumber\\
			&&+x\cdot(\mu_2(c)y)+\rho_2(a)(y\cdot z)+\rho_2(a)\rho_2(b)z+\rho_2(a)\mu_2(c)y+\mu_2(b\intercal c)x\nonumber\\
			&&+\mu_2(\rho_1(y)c)x+\mu_2(\mu_1(z)b)x
			+a\intercal(b\intercal c)+a\intercal(\rho_1(y)c)
			+a\intercal(\mu_1(z)b)\nonumber\\
			&&+\rho_1(x)(b\intercal c)+\rho_1(x)\rho_1(y)c+\rho_1(x)\mu_1(z)b+\mu_1(y\cdot z)a+\mu_1(\rho_2(b)z)a\nonumber\\
			&&+\mu_1(\mu_2(c)y)a.\nonumber
		\end{eqnarray}
		Switching $x+a$ and $y+b$ in the above expression of $Aasso_{A_1\oplus A_2}(x+a,y+b,z+c)$, we obtain  $Aasso_{A_1\oplus A2}(y+b,x+a,z+c).$
		Next, after rearranging terms, we obtain
		\begin{eqnarray}
			&&Aasso_{A_1\oplus A_2}(x+a,y+b,z+c)+Aasso_{A_1\oplus A_2}(y+b,x+a,z+c)\nonumber\\
			&&
			=\Big(Aasso_{A_1}(x,y,z)+Aasso_{A_1}(y,x,z)\Big)
			+\Big(Aasso_{A_2}(a,b,c)+Asso_{A_2}(b,a,c)\Big)\nonumber\\
			&&+
			\Big(\mu_1(z)\mu_1(y)a+\mu_1(y\cdot z)a+\mu_1(z)\rho_1(y)a
			+\rho_1(y)\mu_1(z)a \Big)\nonumber\\
			&&+	\Big(\mu_1(z)\mu_1(x)b+\mu_1(x\cdot z)b+\mu_1(z)\rho_1(x)b+\rho_1(x)\mu_1(z)b \Big)\nonumber\\
			&&+\Big(\mu_2(c)\mu_2(b)x+\mu_2(b\intercal c)x+\mu_2(c)\rho_2(b)x+\rho_2(b)\mu_2(c)x \Big)+
			+\Big(\mu_2(c)\mu_2(a)y\nonumber\\
			&&+\mu_2(a\intercal c)y+\mu_2(c)\rho_2(a)y+\rho_2(a)\mu_2(c)y \Big)+
			\Big(\rho_1(x\ast y)c+\rho_1(x)\rho_1(y)c\nonumber\\
			&&+ \rho_1(y)\rho_1(x)c\Big)+\Big(\rho_2(a\circledast b)z+\rho_2(a)\rho_2(b)z+ \rho_2(b)\rho_2(a)z\Big)\nonumber
		\end{eqnarray}
		\begin{eqnarray}		
			&&+
			\Big( \rho_1(y)(a\intercal c)+\rho_1(\rho_2(a)y+\mu_2(a)y)c+(\rho_1(y)a+\mu_1(y)a)\intercal c+\mu_1(\mu_2(c)y)a\nonumber\\
			&&+a\intercal(\rho_1(y)c)\Big)
			+\Big( \rho_1(x)(b\intercal c)+\rho_1(\rho_2(b)x+\mu_2(b)x)c+(\rho_1(x)b+\mu_1(x)b)\intercal c\nonumber\\
			&&+\mu_1(\mu_2(c)x)b
			+b\intercal(\rho_1(x)c)\Big)
			+\Big(\mu_1(z(a\circledast b)+a\intercal(\mu_1(z)b)+b\intercal(\mu_1(z)a)\nonumber\\
			&&+\mu_1(\rho_2(a)z)b+\mu_1(\rho_2(b)z)a   \Big)+
			\Big(\rho_2(a)(y\cdot z)+\rho_2(\rho_1(y)a+\mu_1(y)a)z\nonumber\\
			&&+(\rho_2(a)y+\mu_2(a)y)\cdot z+\mu_2(\mu_1(z)a)y+y\cdot(\rho_2(a)z) \Big)+\Big(\rho_2(b)(x\cdot z)\nonumber\\
			&&+\rho_2(\rho_1(x)b+\mu_1(x)b)z+(\rho_2(b)x+\mu_2(b)x)\cdot z+\mu_2(\mu_1(z)b)x+x\cdot(\rho_2(b)z) \Big)\nonumber\\
			&&+\Big(\mu_2(c)(x\ast y)+x\cdot(\mu_2(c)y)+y\cdot(\mu_2(c)x)
			+\mu_2(\rho_1(x)c)y+\mu_2(\rho_1(y)c)x \Big).\nonumber
		\end{eqnarray}
		Thanks to  (\ref{HpJJi}), (\ref{7}) and (\ref{8}), we obtain
		\begin{eqnarray}
			&&Aasso(x+a,y+b,z+c)+Aasso(y+b,x+a,z+c)
			\nonumber\\ 
			&&=\Big( \rho_1(y)(a\intercal c)+\rho_1(\rho_2(a)y+\mu_2(a)y)c+(\rho_1(y)a+\mu_1(y)a)\intercal c+\mu_1(\mu_2(c)y)a\nonumber\\
			&&+a\intercal(\rho_1(y)c)\Big)
			+\Big( \rho_1(x)(b\intercal c)+\rho_1(\rho_2(b)x+\mu_2(b)x)c+(\rho_1(x)b+\mu_1(x)b)\intercal c\nonumber\\
			&&+\mu_1(\mu_2(c)x)b
			+\alpha_2(b)\intercal(\rho_1(x)c)
			\Big)
			+\Big(\mu_1(z(a\circledast b)+a\intercal(\mu_1(z)b)+b\intercal(\mu_1(z)a)\nonumber\\
			&&+\mu_1(\rho_2(a)z)b+\mu_1(\rho_2(b)z)a   \Big)+
			\Big(\rho_2(a)(y\cdot z)+\rho_2(\rho_1(y)a+\mu_1(y)a)z\nonumber\\
			&&+(\rho_2(a)y+\mu_2(a)y)\cdot z+\mu_2(\mu_1(z)a)y+y\cdot(\rho_2(a)z) \Big)+\Big(\rho_2(b)(x\cdot z)\nonumber\\
			&&+\rho_2(\rho_1(x)b+\mu_1(x)b)z+(\rho_2(b)x+\mu_2(b)x)\cdot z+\mu_2(\mu_1(z)b)x+x\cdot(\rho_2(b)z) \Big)\nonumber\\
			&&+\Big(\mu_2(c)(x\ast y)+x\cdot(\mu_2(c)y)+y\cdot(\mu_2(c)x)
			+\mu_2(\rho_1(x)c)y+\mu_2(\rho_1(y)c)x \Big).\nonumber
		\end{eqnarray}
		We deduce that (\ref{HpJJi}) holds in $A_1\oplus A_2$  if and only (\ref{mppHJJ1}), (\ref{mppHJJ2}), (\ref{mppHJJ3}) and (\ref{mppHJJ4}) holds.
	\end{proof}
	\begin{proposition}
		Let $(A_1,A_2,\rho_1,\mu_1,\rho_2,
		\mu_2)$ be a matched pair of left pre-Jacobi-Jordan algebras
		$\mathcal{A}_1:=(A_1,\cdot)$ and $\mathcal{A}_2:=(A_2,\top).$  Then,
		$(A_1, A_2, \rho_1+\mu_1,\rho_2+\mu_2)$ is a matched pair of sub-adjacent Jacobi-Jordan algebras $A_1^C:=(A_1,\ast)$ and $A_2^C:=(A_2,\circledast)$.
	\end{proposition}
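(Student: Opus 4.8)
The plan is to sidestep any direct verification of the matched-pair identities \eqref{mpHJJ1}--\eqref{mpHJJ2} and instead derive them from the two structure theorems already established. Since $(A_1,A_2,\rho_1,\mu_1,\rho_2,\mu_2)$ is assumed to be a matched pair of left pre-Jacobi-Jordan algebras, the preceding theorem tells us that $(A_1\oplus A_2,\diamond)$, with $\diamond$ given by \eqref{oppHJJ1}, is itself a left pre-Jacobi-Jordan algebra. Proposition \ref{lpHJJHJJ} then provides its sub-adjacent Jacobi-Jordan algebra $(A_1\oplus A_2)^C=(A_1\oplus A_2,\star)$, where $X\star Y:=X\diamond Y+Y\diamond X$.

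The key step is to compute $\star$ explicitly on elements $x+a$ and $y+b$ by applying \eqref{oppHJJ1} twice and collecting the $A_1$- and $A_2$-components. Using $x\cdot y+y\cdot x=x\ast y$ and $a\top b+b\top a=a\circledast b$ together with the obvious grouping of the action terms, one finds
\begin{align*}
	(x+a)\star(y+b)&=(x+a)\diamond(y+b)+(y+b)\diamond(x+a)\\
	&=\big(x\ast y+(\rho_2+\mu_2)(a)y+(\rho_2+\mu_2)(b)x\big)\\
	&\quad+\big(a\circledast b+(\rho_1+\mu_1)(x)b+(\rho_1+\mu_1)(y)a\big).
\end{align*}
This is precisely the product \eqref{opHJJ1} attached to the data $(A_1^C,A_2^C,\rho_1+\mu_1,\rho_2+\mu_2)$, so $\star$ coincides with the candidate matched-pair product $\triangleleft$ for the sub-adjacent algebras.

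It then remains to observe that this data is admissible. By Proposition \ref{sumrl}, $\rho_1+\mu_1$ is a representation of $A_1^C=(A_1,\ast)$ and $\rho_2+\mu_2$ is a representation of $A_2^C=(A_2,\circledast)$ on the respective underlying spaces, and the commutativity of $\ast$ and $\circledast$ is inherited from Proposition \ref{lpHJJHJJ}. Hence $(A_1^C,A_2^C,\rho_1+\mu_1,\rho_2+\mu_2)$ is legitimate input for the matched-pair theorem for Jacobi-Jordan algebras, and the direct-sum product it defines is exactly $\star$, which the previous paragraphs show to be a Jacobi-Jordan product. Invoking the converse direction of that theorem forces conditions \eqref{mpHJJ1} and \eqref{mpHJJ2} to hold, which is exactly the assertion that $(A_1,A_2,\rho_1+\mu_1,\rho_2+\mu_2)$ is a matched pair of the sub-adjacent Jacobi-Jordan algebras.

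The only genuine work lies in the component bookkeeping of the middle step; I expect the main (though routine) obstacle to be keeping the $\rho_i$-- and $\mu_i$--terms correctly paired so that they assemble into $\rho_i+\mu_i$, after which the identification $\star=\triangleleft$ and hence the whole statement follow formally from the two structure theorems and Proposition \ref{sumrl}.
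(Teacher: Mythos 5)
Your proof is correct, but it takes a genuinely different route from the paper's. The paper argues head-on: after invoking Proposition \ref{sumrl} (as you do) to get that $\theta_1:=\rho_1+\mu_1$ and $\theta_2:=\rho_2+\mu_2$ are representations of $A_1^C$ and $A_2^C$, it expands the left-hand sides of \eqref{mpHJJ1} and \eqref{mpHJJ2} for $\theta_1,\theta_2$ and regroups the terms into blocks that vanish by the pre-Jacobi-Jordan matched-pair conditions \eqref{mppHJJ1}--\eqref{mppHJJ4}; the work is all in this term-by-term bookkeeping. You instead avoid verifying the identities directly: you run the forward direction of the pre-Jacobi-Jordan matched-pair theorem to get the pre-Jacobi-Jordan structure $\diamond$ on $A_1\oplus A_2$, pass to its sub-adjacent Jacobi-Jordan algebra via Proposition \ref{lpHJJHJJ}, check by a one-line symmetrization that this product $\star$ coincides with the candidate product $\triangleleft$ built from $(\ast,\circledast,\rho_1+\mu_1,\rho_2+\mu_2)$, and then invoke the backward direction of the Jacobi-Jordan matched-pair theorem. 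Your identification $\star=\triangleleft$ is computed correctly, and you rightly supply the representation hypotheses (via Proposition \ref{sumrl}) before applying that backward direction --- this matters, since the paper's proof of that theorem cancels the representation-type terms using \eqref{rHJJ2}, so its ``if and only if'' is established under the standing assumption that the structure maps are representations. In short, your argument is more conceptual and shorter (it says that forming the sub-adjacent algebra commutes with forming the matched-pair direct sum), at the price of leaning on both equivalence theorems as black boxes; the paper's computation is heavier but shows explicitly how the four pre-Jacobi-Jordan compatibility conditions assemble into the two Jacobi-Jordan ones.
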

	\begin{proof}
		Since $(A_1,\rho_2,\mu_2)$ and $(A_2,\rho_1,\mu_1)$ are representations of left pre-Jacobi-Jordan algebras $(A_2,\intercal)$
		and $(A_1,\cdot)$ respectively, it follows by Proposition \ref{sumrl}  that $(A_1,\theta_2:=\rho_2+\mu_2)$ and $(A_2,\theta_1:=\rho_1+\mu_1)$ are representations of sub-adjacent Jacobi-Jordan algebras $A_2^C$ and $A_1^C$ respectively.
		Next, let $x,y\in A_1$ and $a,b\in A_2.$ Then, by straightforward computations, after rearranging terms we get:
		\begin{eqnarray}
			&& \theta_1(x)(a\circledast b)+(\theta_1(x)a)\circledast b+(\theta_1(x)b)\circledast a
			+\theta_1(\theta_2(a)x)b+\theta_1(\theta_2(b)x)a\nonumber\\
			&&=
			\Big(\rho_1(x)(a\intercal b)+\rho_1(\rho_2(a)x+\mu_2(a)x)b
			+(\rho_1(x)a+\mu_1(x)a)
			\intercal b
			+\mu_1(\mu_2(b)x)a\nonumber\\
			&&+a\intercal(\rho_1(x)b)\Big)
			+ \Big(\rho_1(x)(b\intercal a)+\rho_1(\rho_2(b)x+\mu_2(b)x)a+(\rho_1(x)b+\mu_1(x)b)\intercal a\nonumber\\
			&&
			+\mu_1(\mu_2(a)x)b+b\intercal(\rho_1(x)a)\Big)
			+\Big(\mu_1(x)(a\circledast b)+a\intercal(\mu_1(x)b)+b\intercal(\mu_1(x)a)
			\nonumber\\
			&&+\mu_1(\rho_2(a)x)b+\mu_1(\rho_2(b)x)a\Big)=0 
			\mbox{ ( by (\ref{mppHJJ1}), (\ref{mppHJJ2}) )}.\nonumber
		\end{eqnarray}
		Hence, we obtain (\ref{mpHJJ1}). Similarly, we compute
		\begin{eqnarray}
			&& \theta_2(a)(x\ast y)+(\theta_2(a)x)\ast y+(\theta_2(a)y)\star x
			+\theta_2(\theta_1(x)a)y+\theta_2(\theta_1(y)a)x\nonumber\\
			&&=
			\Big(\rho_2(a)(x\cdot y)+\rho_2(\rho_1(x)a+\mu_1(x)a)y
			+(\rho_2(a)x+\mu_2(a)x)\cdot y
			+\mu_2(\mu_1(y)a)x\nonumber\\
			&&+x\cdot(\rho_2(a)y)\Big)
			+ \Big(\rho_2(a)(y\cdot x)+\rho_2(\rho_1(y)a+\mu_1(y)a)x+(\rho_2(a)y+\mu_2(a)y)\cdot x\nonumber\\
			&&
			+\mu_2(\mu_1(x)a)y+y\cdot(\rho_2(a)x)\Big)
			+\Big(\mu_2(a)(x\ast y)+x\cdot(\mu_2(a)y)+y\cdot(\mu_2(a)x)
			\nonumber\\
			&&+\mu_2(\rho_1(x)a)y+\mu_2(\rho_1(y)a)x\Big)=0 
			\mbox{ ( by (\ref{mppHJJ3}), (\ref{mppHJJ4}) )}.\nonumber
		\end{eqnarray}
		Then, we get also (\ref{mpHJJ2}).
	\end{proof}	
	\section{Cohomologies and linear deformations of a relative Rota-Baxter operators on Jacobi-Jordan algebras.}
	This section  contains most results of this paper. We first introduce the notion of relative Rota-Baxter operators of Jacobi-Jordan algebras  and prove several results about this concepts. Linear deformations of Jacobi-Jordan algebras  and the one of their relative Rota-Baxter operators are also studied and links between these two deformations are given.
	\subsection{Cohomologies of relative Rota-Baxter operators on Jacobi-Jordan algebras}
	\begin{definition}
		Let $(V,\rho)$ be a representation of a Jacobi-Jordan algebra $(A,\ast).$ A linear
		operator $T : V\rightarrow A$ is called a relative Rota-Baxter operator of $A$ with respect to $\rho$ if it satisfies 
		\begin{eqnarray}
			&& T(u)\ast T(v)=T\Big(\rho(T(u))v+\rho(T(v))u\Big) \mbox{ for all $u,v\in V$.} \label{rbHJJ2}
		\end{eqnarray}
	\end{definition}
	Observe  that Rota-Baxter operators on Jacobi-Jordan algebras are relative Rota-Baxter operators with respect to the regular representation.
	\begin{example}
		Consider the $3$-dimensional Jacobi-Jordan algebra $(A,\ast)$ \cite{dbaf} where  nonzero products with respect to a basis $(e_1,e_2,e_3)$ are given by:
		$$e_1\ast e_1=e_2=e_3\ast e_3.$$
		Then the linear map $T: A\rightarrow A$  defined by 
		$$T(e_1):=a_1e_1+a_2e_2+a_3e_3,\ T(e_2):=b_1e_1+b_2e_2+b_3e_3,\ T(e_3):=c_1e_1+c_2e_2+c_3e_3$$ is a relative Rota-Baxter operator with respect to the regular representation if and only if\\
		$$\begin{cases}
			b_1=b_3=0\\
			a_1^2+a_3^2=2a_1b_2\\
			a_1c_1+a_3c_3=a_3b_2+b_2c_1\\
			c_1^2+c_3^2=2b_2c_3
		\end{cases} 
		.$$
		In particular, we get $T=\left(
		\begin{array}{cccc}
			0&0&0\\
			a_2&0&c_2\\
			0&0&0
		\end{array}
		\right)$ 
		and $T=\left(
		\begin{array}{cccc}
			0&0&0\\
			a_2&b_2&c_2\\
			0&0&b_2+|b_2|
		\end{array}
		\right).$
	\end{example}
	\begin{proposition}
		Let $(A, \ast)$ be a Jacobi-Jordan algebra and $(V,\rho)$ be a representation of $(A, \ast).$
		Then, $A\oplus V$ is a representation of $(A, \ast)$ under the maps $\rho_{A\oplus V}: A\rightarrow gl(A\oplus V)$ defined by
		\begin{eqnarray}
			&&\rho_{A\oplus V}(a)(b+v):=(a\ast b)+\rho(a)v.\nonumber
		\end{eqnarray}
		Define the linear map $T: A\oplus V\rightarrow A, a+u\mapsto a.$ Then $T$ is up to a scalar coefficient, a relative Rota-Baxter operator on $A$ with
		respect to the representation $(A\oplus V,\rho_{A\oplus V}).$
	\end{proposition}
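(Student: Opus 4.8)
The plan is to establish the two assertions in turn: first that $(A\oplus V,\rho_{A\oplus V})$ is a representation of $(A,\ast)$, and then that the projection $T$ satisfies the relative Rota-Baxter identity \eqref{rbHJJ2} up to a scalar factor.

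For the first assertion I would note that $\rho_{A\oplus V}(a)$ acts block-diagonally on $A\oplus V$: restricted to the summand $A$ it is $\rho_{A\oplus V}(a)b=a\ast b=L(a)b$, the left multiplication of the regular representation, while restricted to $V$ it is $\rho(a)$. Hence $\rho_{A\oplus V}=L\oplus\rho$, and the defining identity \eqref{rHJJ2} for $\rho_{A\oplus V}$ decouples into its two summands. On $A$ it becomes $L(x\ast y)=-L(x)L(y)-L(y)L(x)$, i.e. the statement that $(A,L)$ is a representation, which follows from the Jacobi identity \eqref{JJi} together with commutativity. On $V$ it is precisely \eqref{rHJJ2} for $(V,\rho)$, true by hypothesis. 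Therefore $\rho_{A\oplus V}$ satisfies \eqref{rHJJ2} and $(A\oplus V,\rho_{A\oplus V})$ is a representation.

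For the second assertion I would fix arbitrary elements $U=a+u$ and $W=b+v$ of $A\oplus V$, so that $T(U)=a$ and $T(W)=b$, and the left-hand side of \eqref{rbHJJ2} is $T(U)\ast T(W)=a\ast b$. For the right-hand side I would compute $\rho_{A\oplus V}(T(U))W=\rho_{A\oplus V}(a)(b+v)=(a\ast b)+\rho(a)v$ and, symmetrically, $\rho_{A\oplus V}(T(W))U=(b\ast a)+\rho(b)u$; summing and using commutativity of $\ast$ gives $2(a\ast b)+\rho(a)v+\rho(b)u$. Applying $T$, which kills the $V$-component, leaves $2(a\ast b)$. Thus $T(U)\ast T(W)=\tfrac12\,T\big(\rho_{A\oplus V}(T(U))W+\rho_{A\oplus V}(T(W))U\big)$, which is exactly \eqref{rbHJJ2} up to the scalar coefficient $\tfrac12$.

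There is no real obstacle: both parts are direct verifications. The only point worth signalling is the source of the scalar coefficient, which arises solely from the commutativity $a\ast b=b\ast a$ doubling the $A$-valued term on the right-hand side while the left-hand side carries a single copy of $a\ast b$. This is precisely why the conclusion is stated \emph{up to a scalar coefficient} rather than asserting that $T$ is a relative Rota-Baxter operator on the nose.
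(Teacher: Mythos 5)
Your proposal is correct and follows essentially the same route as the paper: the paper likewise verifies the representation identity for $\rho_{A\oplus V}$ componentwise (the $A$-component via commutativity and \eqref{JJi}, the $V$-component via \eqref{rHJJ2}), and then computes $T(a+u)\ast T(b+v)=a\ast b$ against $T\big(\rho_{A\oplus V}(T(a+u))(b+v)+\rho_{A\oplus V}(T(b+v))(a+u)\big)=2(a\ast b)$. Your observation that $\rho_{A\oplus V}=L\oplus\rho$ is just a cleaner packaging of the paper's direct computation, and your explicit identification of the scalar as $\tfrac12$ makes precise what the paper leaves as ``up to a scalar coefficient.''
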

	\begin{proof}
		First, let $a,b,c\in A$ and $u\in V.$ Then by the commutativity of $\ast$, identities 
		(\ref{JJi}) and (\ref{rHJJ2}), we obtain
		\begin{eqnarray}
			&&\rho_{A\oplus V}(a\ast b)(c+u)=(a\ast b)\ast c+\rho(a\ast b)u=-(a\ast(b\ast c)+b\ast (a\ast c))-(\rho(a)\rho(b)c+\rho(b)\rho(a)c)\nonumber\\
			&&-\rho_{A\oplus V}(a)\rho_{A\oplus V}(b)(c+u)
			-\rho_{A\oplus V}(b)\rho_{A\oplus V}(a)(c+u).\nonumber
		\end{eqnarray}
		Next, pick $a,b\in A$ and $u,v\in V,$ then $T(a+u)\ast T(b+v)=a\ast b$ and
		\begin{eqnarray}
			T(\rho_{A\oplus V}(T(a+u))(b+v)+\rho_{A\oplus V}(T(b+v))(a+u))=T(a\ast b+\rho(a)v+b\ast a+\rho(b)u)=2(a \ast b).\nonumber
		\end{eqnarray}
		Hence, the conclusion follows.
	\end{proof}
	Let give some characterizations of relative Rota-Baxter operators on Jacobi-Jordan algebras.
	\begin{proposition}
		Let $(V,\rho)$ be a representation  of a Jacobi-Jordan algebra $(A, \ast)$. Then, a linear map $T : V\rightarrow A$ is a relative Rota-Baxter operator with respect to $(V,\rho)$ if and only if the graph of $T,$
		$$G_r(T):=\{(T(v), v), v\in  V\}$$
		is a subalgebra of the semi-direct product algebra $A\ltimes V.$
	\end{proposition}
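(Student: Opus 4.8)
The plan is to reduce the subalgebra condition to a single multiplicative closure check, since $G_r(T)$ is automatically a linear subspace of $A\oplus V$: it is the image of the injective linear map $v\mapsto T(v)+v$, hence closed under addition and scalar multiplication by linearity of $T$. Thus being a subalgebra of $A\ltimes V$ is equivalent to closure under the semi-direct product $\diamond$ defined in (\ref{SdpHJJ1}).

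First I would take two arbitrary elements $T(u)+u$ and $T(v)+v$ of $G_r(T)$, with $u,v\in V$, and compute their product using (\ref{SdpHJJ1}) with $A$-parts $T(u),T(v)$ and $V$-parts $u,v$ respectively, obtaining
$$(T(u)+u)\diamond(T(v)+v)=T(u)\ast T(v)+\big(\rho(T(u))v+\rho(T(v))u\big).$$
This element splits into an $A$-component $T(u)\ast T(v)$ and a $V$-component $\rho(T(u))v+\rho(T(v))u$. Next I would observe that a general element $a+w$ of $A\oplus V$ lies in $G_r(T)$ precisely when $a=T(w)$. Applying this criterion to the product above, closure of $G_r(T)$ under $\diamond$ holds if and only if, for all $u,v\in V$,
$$T(u)\ast T(v)=T\big(\rho(T(u))v+\rho(T(v))u\big),$$
which is exactly the defining identity (\ref{rbHJJ2}) of a relative Rota-Baxter operator. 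Reading this equivalence in both directions establishes the claim.

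There is essentially no obstacle: the statement is a direct reformulation of (\ref{rbHJJ2}), and the argument amounts to the one-line computation above together with the trivial remark that the graph is a linear subspace. The only point worth stating explicitly is that, because of this linearity, verifying the subalgebra property requires checking multiplicative closure alone.
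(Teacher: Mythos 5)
Your proposal is correct and follows essentially the same route as the paper: compute the product of two graph elements under the semi-direct product (\ref{SdpHJJ1}) and observe that membership of the result in $G_r(T)$ is precisely the identity (\ref{rbHJJ2}). Your explicit remark that $G_r(T)$ is automatically a linear subspace, so only multiplicative closure needs checking, is a small point the paper leaves implicit.
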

	\begin{proof}
		Pick $u,v\in V.$ Then, we compute:
		\begin{eqnarray}
			(Tu,u)\diamond (Tv,v)=(Tu\ast Tv, \rho(Tu)v+\rho(Tv)u).\nonumber
		\end{eqnarray}
		Hence 
		\begin{eqnarray}
			(Tu,u)\diamond (Tv,v)\in Gr(T)\Leftrightarrow Tu\ast Tv=T(\rho(Tu)v+\rho(Tv)u).
		\end{eqnarray}
	\end{proof}
	The following result shows that a relative Rota-Baxter operator can be lifted up the
	Rota-Baxter operator.
	\begin{proposition}
		Let $(A, \ast)$ be a Jacobi-Jordan algebra, $(V,\rho)$ be a representation of $A$ and
		$T : V\rightarrow A$ be a linear map. Define 
		$\widehat{T}\in End(A\oplus V)$ by 
		$\widehat{T}(a+v):=Tv.$ Then T is an relative Rota-Baxter operator with respect to to $(V,\rho)$
		if and only if $\widehat{T}$ is a Rota-Baxter operator on $A\oplus V$ of weight $\lambda=0.$
	\end{proposition}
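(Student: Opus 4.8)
The plan is to unwind what it means for $\widehat{T}$ to be a Rota-Baxter operator of weight $0$ on the semi-direct product Jacobi-Jordan algebra $A\ltimes V=(A\oplus V,\diamond)$ furnished by Proposition \ref{spHJJ}, and to verify that this identity collapses exactly to the relative Rota-Baxter condition (\ref{rbHJJ2}). Concretely, specializing the defining identity (\ref{Rota-Baxt}) to $\mu=\diamond$, $R=\widehat{T}$ and $\lambda=0$, the assertion ``$\widehat{T}$ is Rota-Baxter of weight $0$'' reads
\[
\widehat{T}(X)\diamond\widehat{T}(Y)=\widehat{T}\big(\widehat{T}(X)\diamond Y+X\diamond\widehat{T}(Y)\big),\qquad \forall\, X,Y\in A\oplus V.
\]
I would evaluate both sides on arbitrary $X=a+u$ and $Y=b+v$ with $a,b\in A$ and $u,v\in V$, and simplify using the explicit semi-direct product (\ref{SdpHJJ1}).

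The observation driving the whole computation is that, by its very definition, $\widehat{T}(a+u)=Tu$ lies entirely in $A$; that is, it has vanishing $V$-component. First I would treat the left-hand side: since both $\widehat{T}(X)=Tu$ and $\widehat{T}(Y)=Tv$ have zero $V$-part, the representation terms in (\ref{SdpHJJ1}) vanish and one is left with $\widehat{T}(X)\diamond\widehat{T}(Y)=Tu\ast Tv$. Next I would expand the two summands inside $\widehat{T}$ on the right. Using (\ref{SdpHJJ1}) one computes $\widehat{T}(X)\diamond Y=Tu\ast b+\rho(Tu)v$ and $X\diamond\widehat{T}(Y)=a\ast Tv+\rho(Tv)u$, so that their sum has $A$-component $Tu\ast b+a\ast Tv$ and $V$-component $\rho(Tu)v+\rho(Tv)u$. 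Since $\widehat{T}$ reads off $T$ of the $V$-component and annihilates the $A$-component, applying it gives $\widehat{T}\big(\widehat{T}(X)\diamond Y+X\diamond\widehat{T}(Y)\big)=T\big(\rho(Tu)v+\rho(Tv)u\big)$.

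Comparing the two sides, the Rota-Baxter identity for $\widehat{T}$ is thus equivalent to $Tu\ast Tv=T\big(\rho(Tu)v+\rho(Tv)u\big)$, which is precisely (\ref{rbHJJ2}); as each manipulation is an equivalence, both implications follow simultaneously. The one point deserving genuine care — and essentially the only subtlety — is the bookkeeping of $A$- versus $V$-components in the semi-direct product: one must notice that the free parameters $a,b\in A$ drop out completely, since they never reach the argument of $T$ (the map $\widehat{T}$ sees only $V$-components, and $\widehat{T}(X),\widehat{T}(Y)$ carry none). Consequently the identity quantified over all of $A\oplus V$ is genuinely equivalent to a condition quantified only over $V$, and I expect no further obstacle beyond this careful tracking of components.
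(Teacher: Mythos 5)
Your proposal is correct and follows essentially the same route as the paper: evaluate both sides of the weight-zero Rota--Baxter identity for $\widehat{T}$ on arbitrary elements $a+u$, $b+v$ of the semi-direct product, observe that the left side reduces to $Tu\ast Tv$ and the right side to $T\big(\rho(Tu)v+\rho(Tv)u\big)$, and conclude the equivalence with the relative Rota--Baxter condition. Your explicit remark that the $A$-components $a,b$ drop out (so the quantification over $A\oplus V$ collapses to one over $V$) is a point the paper leaves implicit, but the computation is identical.
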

	\begin{proof}
		Let $a,b\in A$ and $u,v\in V.$ Then, we have
		\begin{eqnarray}
			\widehat{T}(a+u)\diamond\widehat{T}(b+v)=Tu\diamond Tv=Tu\ast Tv.\nonumber 
		\end{eqnarray}
		and
		\begin{eqnarray}
			&&\widehat{T}(\widehat{T}(a+u)\diamond(b+v)+(a+u)\diamond\widehat{T}(b+v))=\widehat{T}(Tu\ast b+\rho(Tu)v+a\ast Tv+\rho(Tv)u)\nonumber\\
			&&=T(\rho(Tu)v+\rho(Tv)u)\nonumber.
		\end{eqnarray}
		Hence,
		\begin{eqnarray}
			&&\widehat{T}(a+u)\diamond\widehat{T}(b+v)=\widehat{T}(\widehat{T}(a+u)\diamond(b+v)+(a+u)\diamond\widehat{T}(b+v))\nonumber
		\end{eqnarray}
		if and only if,
		\begin{eqnarray}
			&&Tu\ast Tv=T(\rho(Tu)v+\rho(Tv)u).
		\end{eqnarray}	
	\end{proof}
	\begin{proposition}\label{HJJpJJ}
		Let $(A,\ast)$ be a Jacobi-Jordan algebra and $(V, \rho)$ be a representation. If $T$ is
		a relative Rota-Baxter operator with respect to $\rho$, then $V_T:=(V, \ast_T)$ is a Jacobi-Jordan algebra, where
		\begin{eqnarray}
			u\ast_T v:=\rho(T(u))v+ \rho(T(v))u\mbox{ for $u,v\in V$} \label{revasHJJ1}
		\end{eqnarray}
		and $T$ is a morphism of Jacobi-Jordan algebras.
	\end{proposition}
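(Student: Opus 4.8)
The plan is to verify the two defining axioms of a Jacobi-Jordan algebra for $(V,\ast_T)$ — commutativity and the vanishing of the jacobian — and then to check the morphism condition, the last two being almost immediate. Commutativity of $\ast_T$ is clear straight from (\ref{revasHJJ1}), since $u\ast_T v=\rho(T(u))v+\rho(T(v))u$ is manifestly symmetric in $u$ and $v$. The morphism property is equally direct: applying $T$ to (\ref{revasHJJ1}) and invoking the relative Rota-Baxter identity (\ref{rbHJJ2}) gives $T(u\ast_T v)=T\big(\rho(T(u))v+\rho(T(v))u\big)=T(u)\ast T(v)$, which is exactly the statement that $T\colon(V,\ast_T)\to(A,\ast)$ is a morphism. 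I would record this identity first, since it is also the engine driving the Jacobi identity.

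The real content is $J_T(u,v,w)=0$. My approach is to expand $(u\ast_T v)\ast_T w$ in stages. Using (\ref{revasHJJ1}) twice, $(u\ast_T v)\ast_T w=\rho(T(u\ast_T v))w+\rho(T(w))\big(\rho(T(u))v+\rho(T(v))u\big)$; I then replace $T(u\ast_T v)$ by $T(u)\ast T(v)$ via the morphism identity just obtained, and finally expand $\rho(T(u)\ast T(v))$ through the representation identity (\ref{rHJJ2}) as $-\rho(T(u))\rho(T(v))-\rho(T(v))\rho(T(u))$. Abbreviating $X:=\rho(T(u))$, $Y:=\rho(T(v))$, $Z:=\rho(T(w))$, this yields the compact form $(u\ast_T v)\ast_T w=-XYw-YXw+ZXv+ZYu$.

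Summing cyclically over $(u,v,w)$ then presents $J_T(u,v,w)$ as a sum of twelve operator terms, and the key point is that they cancel in pairs. For instance, the two $w$-terms $-XYw-YXw$ produced by $(u\ast_T v)\ast_T w$ are annihilated by the $+XYw$ and $+YXw$ coming from the other two cyclic summands, and the $u$- and $v$-terms cancel in the same fashion. Hence $J_T=0$, which together with commutativity shows $(V,\ast_T)$ is a Jacobi-Jordan algebra, and the morphism claim has already been checked.

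I expect the only genuine obstacle to be the bookkeeping of the cyclic sum: one must track carefully which of $X,Y,Z$ acts on which of $u,v,w$ and in which \emph{order}, because the cancellations match $XY$ with $XY$ rather than with $YX$. Everything else reduces to a direct substitution of the three cited identities (\ref{rbHJJ2}), (\ref{rHJJ2}) and (\ref{revasHJJ1}).
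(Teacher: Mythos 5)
Your proof is correct: the expansion $(u\ast_T v)\ast_T w=-XYw-YXw+ZXv+ZYu$ (with $X=\rho(T(u))$, $Y=\rho(T(v))$, $Z=\rho(T(w))$) is right, and the twelve terms of the cyclic sum do cancel in pairs exactly as you describe, while commutativity and the morphism property are handled the same way in any treatment. However, your route through the Jacobi identity differs from the paper's. The paper never computes the cyclic sum: instead it introduces the non-symmetrized product $u\cdot v:=\rho(T(u))v$, observes that $\ast_T$ is its symmetrization, and checks the single identity
\begin{equation*}
(u\ast_T v)\cdot w=-u\cdot(v\cdot w)-v\cdot(u\cdot w),
\end{equation*}
which is the left pre-Jacobi-Jordan identity (\ref{HpJJi2}) and is precisely your computation \emph{before} summing cyclically (morphism identity from (\ref{rbHJJ2}), then expansion of $\rho(T(u)\ast T(v))$ via (\ref{rHJJ2})). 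It then invokes Proposition \ref{lpHJJHJJ}, which says that symmetrizing any left pre-Jacobi-Jordan product yields a Jacobi-Jordan algebra, thereby delegating all of your cancellation bookkeeping to that earlier, already-proved lemma. What your approach buys is self-containment: it needs only the two cited identities and no structural detour, at the cost of tracking the order of the operators $X,Y,Z$ across three cyclic summands. What the paper's approach buys is brevity and extra information: it exhibits the finer left pre-Jacobi-Jordan structure $u\cdot v=\rho(T(u))v$ on $V$ underlying $\ast_T$, which is exactly the structure the paper exploits again in Section 4 (where the analogous proposition for pre-Jacobi-Jordan algebras produces $(V,\cdot)$ itself and the induced structure on $T(V)$). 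Both arguments are sound; yours is a legitimate, more elementary alternative.
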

	\begin{proof}
		Let $u, v, w\in V$ and put $u\cdot v=\rho(T(u))v.$ Then, we have
		$u\ast_T v=u\cdot v + v\cdot u.$  It is then enough to prove thanks to Proposition \ref{lpHJJHJJ} that $\cdot$ satisfies (\ref{HpJJi2}). Note
		first that $T(u\ast_T v) = T(u)\ast T(v).$ Then, we compute 
		(\ref{HpJJi2}) as follows:
		\begin{eqnarray}
			&&(u\ast_T v)\cdot w=\rho(T(u)\ast T(v))w-\rho(T u)\rho(Tv)w-\rho(T v)\rho(tu)w\nonumber\\
			&&=-u\cdot(v\cdot w)-v\cdot(u\cdot w).\nonumber
		\end{eqnarray}
		Therefore, $(V, \ast_T)$ is a  Jacobi-Jordan algebra and $T$ is a morphism of Jacobi-Jordan algebras.
	\end{proof}
	\begin{proposition}
		Let $T : V\rightarrow A$ be a relative Rota-Baxter operator on the Jacobi-Jordan algebra $( A, \ast)$ with respect to the representation $(V,\rho).$
		Let us define a map $\rho_T : V\rightarrow  gl(A)$ given by
		\begin{eqnarray}
			\rho_T(u)x:=T(u)\ast x-T(\rho(x)u) \mbox{ for all $(u,x)\in V\times A $}\nonumber
		\end{eqnarray}
		Then, the couple $(A,\rho_T)$ is a representation of the  Jacobi-Jordan algebra $V_T=(V,\ast_T)$.
	\end{proposition}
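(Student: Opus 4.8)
The plan is to reduce the statement to the single defining identity of a representation. Since $(V,\ast_T)$ is a Jacobi-Jordan algebra by Proposition \ref{HJJpJJ}, condition (\ref{rHJJ2}) tells us that $(A,\rho_T)$ is a representation of $(V,\ast_T)$ precisely when
\[
\rho_T(u\ast_T v)=-\rho_T(u)\rho_T(v)-\rho_T(v)\rho_T(u)\qquad\text{for all }u,v\in V.
\]
So I would fix $u,v\in V$ and an arbitrary $x\in A$, apply both sides to $x$, and expand everything using the definitions $u\ast_T v=\rho(T(u))v+\rho(T(v))u$ and $\rho_T(u)x=T(u)\ast x-T(\rho(x)u)$.

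First I would simplify the left-hand side. Writing $\rho_T(u\ast_T v)x=T(u\ast_T v)\ast x-T(\rho(x)(u\ast_T v))$ and using the relative Rota-Baxter identity (\ref{rbHJJ2}) in the form $T(u\ast_T v)=T(u)\ast T(v)$, the left-hand side becomes $(T(u)\ast T(v))\ast x$ together with two terms of the shape $T(\rho(x)\rho(T(\cdot))\cdot)$. Expanding $\rho_T(u)\rho_T(v)x$ and $\rho_T(v)\rho_T(u)x$ produces, besides the pure $A$-terms $T(u)\ast(T(v)\ast x)$ and $T(v)\ast(T(u)\ast x)$, several terms of the form $T(\cdots)$ together with one genuinely mixed product $T(u)\ast T(\rho(x)v)$ and its $u\leftrightarrow v$ partner.

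The computation then splits cleanly into two independent pieces. The terms lying purely in $A$, namely $(T(u)\ast T(v))\ast x+T(u)\ast(T(v)\ast x)+T(v)\ast(T(u)\ast x)$, vanish immediately: by commutativity they are exactly the three cyclic summands of the jacobian $J(T(u),T(v),x)$, which is zero by (\ref{JJi}).

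The remaining work, and the only delicate part, is to show that all the terms of the form $T(\cdots)$ cancel. Here the two structural identities do the job. I would first apply the Rota-Baxter identity (\ref{rbHJJ2}) to the mixed product $T(u)\ast T(\rho(x)v)$ and its partner, which rewrites each as a sum of $T(\cdots)$ terms and, crucially, cancels the terms $T(\rho(T(\rho(x)v))u)$ already present from the expansion. I would then apply the representation identity (\ref{rHJJ2}) for $\rho$ to the factors $\rho(T(v)\ast x)$ and $\rho(T(u)\ast x)$, turning each into $-\rho(T(\cdot))\rho(x)-\rho(x)\rho(T(\cdot))$. After these two substitutions every surviving $T(\cdots)$ term is matched by exactly one term of opposite sign, so the whole expression collapses to zero. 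The main obstacle is purely organizational bookkeeping, keeping track of the eight or so $T(\cdots)$ terms and pairing them correctly, rather than any conceptual difficulty.
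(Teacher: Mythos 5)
Your proposal is correct and follows essentially the same route as the paper's proof: expand $\rho_T(u\ast_T v)x$ via $T(u\ast_T v)=T(u)\ast T(v)$, expand $\rho_T(u)\rho_T(v)x$ and its $u\leftrightarrow v$ partner, apply (\ref{rbHJJ2}) to the mixed products $T(u)\ast T(\rho(x)v)$ (cancelling the $T(\rho(T(\rho(x)v))u)$ terms) and (\ref{rHJJ2}) to $\rho(T(v)\ast x)$, then kill the pure $A$-terms with the Jacobi identity and cancel the remaining $T(\cdots)$ terms in pairs. The bookkeeping you describe does close up exactly as claimed, so nothing is missing.
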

	\begin{proof}
		Let $u,v\in V,\ x\in A.$ Recall that $u\ast_T v=\rho(Tu)v+\rho(Tv)u$ and $T(u\ast_T v)=T(u)\ast T(v).$ Then, by  straightforward  computations, we have
		\begin{eqnarray}
			&&\rho_T(u\ast_T v)x=(T(u)\ast T(v))\ast x-T\Big(\rho(x)\rho(Tu)v\Big)-T\Big(\rho(x)\rho(Tv)u\Big).\nonumber
		\end{eqnarray}
		Also, we compute
		\begin{eqnarray}
			&&\rho_T(u)\rho_T(v)x=Tu\ast(Tv\ast x)-Tu\ast T(\rho(x)v)-
			T\Big(\rho(Tv\ast x )u\Big)\nonumber\\
			&&+T\Big( \rho(T(\rho(x)v))u\Big)=
			Tu\ast(Tv\ast x)-T\Big(\rho(Tu)\rho(x)v+\rho(T(\rho(x)v))u \Big)\nonumber\\
			&&-
			T\Big(\rho(T v)\rho(x)u+\rho(x)\rho(Tv)u \Big)+T\Big( \rho(T(\rho(x)v))u\Big)
			\mbox{ (by (\ref{rbHJJ2}) and (\ref{rHJJ2})  ) }\nonumber\\
			&&=
			T u\ast(Tv\ast x)-T\Big(\rho(T u)\rho(x)v\Big)+
			T\Big(\rho(T v)\rho(x)u\Big)+T\Big(\rho(x)\rho(Tv)u \Big).\nonumber
		\end{eqnarray}
		Switching $u$ and $v$ in the above equation, we come to
		\begin{eqnarray}
			\rho_T(v)\rho_T(u)x=
			Tv\ast(Tu\ast x)-T\Big(\rho(T v)\rho(x)u\Big)+
			T\Big(\rho(T u)\rho(x)v\Big)+T\Big(\rho(x)\rho(Tu)v \Big).\nonumber
		\end{eqnarray}
		It follows by (\ref{JJi}) that
		\begin{eqnarray}
			-\rho_T(u)\rho_T(v)x-\rho_T(v)\rho_T(u)x=\rho_T(u\ast_T v)x,\nonumber
		\end{eqnarray}
		i.e., (\ref{rHJJ2}) holds in $(A,\rho_T).$
	\end{proof}

Let consider the Jacobi-Jordan algebra $V_T=(V,\ast_T)$ and its representation $(A,\rho_T)$.
Thanks to the cochain complex  of the so-called zigzag cohomologies of Jacobi-Jordan algebra introduced in \cite{aBsBaMsM},  the corresponding   cochain complex of the Jacobi-Jordan algebra $(V,\ast_T)$ with coefficients in the representation $(A,\rho_T)$ is  given by    $ (C^{\bullet}(V,A), \mathbf{A}^{\bullet}(V,A),d_{\rho_T}^{\bullet},\delta_{\rho_T}^{\bullet}) $   where 
$$\mbox{ for all } n\in \mathbb{N},n\geq 1 ,\   \mathbf{C}^{n}(V,A):=\{f:V^{\otimes n}\rightarrow A, f \text{ is linear}\},\ 
 \mathbf{A}^{n}(V,A)=Hom(\wedge^{n} V, A),$$
 $$d_{\rho_T}^n: C^n(V,A)\rightarrow C^{n+1}(V,A) \mbox{  and } \delta_{\rho_T}^n: A^n(V,A)\rightarrow C^{n+1}(V,A).$$ 
	More precisely,
	$$d_{\rho_T}^0(x)(v)=\delta_{\rho_T}^0(x)(v):=\rho_T(v)(x)=T(v)\ast x-T(\rho(x)v).$$
	Furtheremore, for any $n\in\mathbb{N}\setminus\{0\},$ 
	\begin{eqnarray}
		&& d_{\rho_T}^n f(u_1, \cdots, u_{n+1})=\sum\limits_{i=1}^{n+1}\rho_T(u_i)f(u_1,\cdots,\widehat{u_i},\cdots, u_{n+1})\nonumber\\
		&&+\sum\limits_{1\leq i< j\leq n+1} f(u_i\ast_T u_j, u_1, \cdots, \widehat{u_i}, \cdots, \widehat{u_j}, \cdots, u_{n+1}) 
	\end{eqnarray}
	\begin{eqnarray}
		&& \delta_{\rho_T}^n f(u_1, \cdots, u_{n+1})=\sum\limits_{i=1}^{n+1}\rho_T(u_i)f(u_1,\cdots,\widehat{u_i},\cdots, u_{n+1})\nonumber\\
		&&-\sum\limits_{1\leq i< j\leq n+1} f(u_i\ast_T u_j, u_1, \cdots, \widehat{u_i}, \cdots, \widehat{u_j}, \cdots, u_{n+1}). 
	\end{eqnarray}
and 
\begin{eqnarray}
\forall n\in\mathbb{N}\setminus\{0\}, d_{\rho_T}^n\circ\delta_{\rho_T}^{n-1}=0.
\end{eqnarray}
	\begin{definition} 
		Let $T$ be a relative Rota-Baxter operator of the Jacobi-Jordan algebra $(A,\ast)$ with respect to a representation 
		$(V,\rho).$ The cochain complex  $(C^{\bullet}(V,A)=\bigoplus_{k=0}^{+\infty} C^k(V,A),\mathbf{A}^{\bullet}(V,A)=\bigoplus_{k=0}^{+\infty} \mathbf{A}^k(V,A) ,d_{\rho_T}^{\bullet},\delta_{\rho_T}^{\bullet})$ is
		called the cohomology complex for the relative Rota-Baxter operator $T.$
	\end{definition}
	The $k^{th}$ cohomology space of $V$ with values/coefficients in $A$ is given 
	by the quotient
	\begin{eqnarray}
		H^k(V,A):=Z^k(V,A)/B^k(V,A).
	\end{eqnarray}
	It is obvious that $x\in A$ is closed if and only if $L_x\circ T-T\circ p(x)=0$ and $f\in C^1(V,A)$ is closed if and only if
	$T(u)\ast f(v)+T(v)\ast f(u)-T(\rho(f(u))(v)+\rho(f(v))(u))-f(\rho(T(u))(v)+\rho(T(v))(u))=0.$ 
	\subsection{Linear deformations of a relative Rota-Baxter operators on Jacobi-Jordan algebras.}
	\begin{definition}
		Let $(A,\ast)$ be a Jacobi-Jordan algebra and $ T:V\rightarrow A $ be a relative Rota-Baxter operator with respect to a representation $(V,\rho)$. We say that a linear map  $\mathcal{Z}:V\rightarrow A$ generates a linear deformation of $T$  if \,\  
		 $\forall  t \in \mathbb{K},T_{t}=T+t\mathcal{Z}$ is a relative Rota-Baxter  operateur of $(A,\ast)$ with respect to $(V,\rho).$
	\end{definition}
	Observe that a linear map $ \mathcal{Z}:V\rightarrow A $ generates a linear deformation of $T$ if only and if : 
	\begin{eqnarray}
		&&\mathcal{Z}u \ast \mathcal{Z}v= \mathcal{Z}(\rho(\mathcal{Z}u)v+\rho(\mathcal{Z}v)u),\label{defor2}\\
		&& Tu\ast \mathcal{Z}v + Tv\ast \mathcal{Z}u-T(\rho(\mathcal{Z}(u))v+\rho(\mathcal{Z}(v))u)-\mathcal{Z}(\rho(T(u))v+\rho(T(v))u)
		=0.\label{defor3}
	\end{eqnarray}
	The identity (\ref{defor2}) means that $\mathcal{Z}$ is a relative Rota-Baxter operateur of $(A,\ast)$ with respect to $(V,\rho).$ Observe that (\ref{defor3}) is equivalent to
	\begin{eqnarray*}
		\mathcal{Z}(u\ast_Tv)=\rho_T(u)\mathcal{Z}v+\rho_T(v)\mathcal{Z}u.
	\end{eqnarray*}
	Therefore,  $(\ref{defor3})$ means that $\mathcal{Z}\in Der(V,A)$ i.e. $\mathcal{Z}$ is a derivation of the Jacobi-Jordan algebra $(V,\ast_T)$ with values in the representation $(A,\rho_T).$
	
	\begin{definition}\label{D9j}
		Let $ T $ and $ T' $ be two relative Rota-Baxter operators on a Jacobi-Jordan algebra $ (A,\cdot) $ with respect to a representation $ (V, \rho) $. A morphism from $ T' $ to $ T $ consits of a Jacobi-Jordan algebras morphism $\phi_{A} : A\to A $ and a linear map $ \phi_{V} : V\to V $ such that:
		\begin{eqnarray}
			\label{36j}
			T \circ \phi_{V}=\phi_{A}\circ T',
		\end{eqnarray}
		\begin{eqnarray}
			\label{37j}
			\phi_{V}\circ\rho(x)=\rho(\phi_{A}(x))\circ\phi_{V} \mbox{ $\forall x\in A $.}
		\end{eqnarray}
		
		In particular, if both $ \phi_A$ and $ \phi_{V}$ are inversible, $ (\phi_{A},\phi_{V}) $ is called an isomorphism from $ T' $ to $ T $.
	\end{definition}
	\begin{proposition}\label{Pro8j}
		Let $ T $ and $ T' $ be two relative Rota-Baxter operators on a Jacobi-Jordan algebra $ (A,\cdot) $ with respect to a representation $ (V; \rho) $ and $ (\phi_{A},\phi_{V}) $ a morphism (resp. an isomorphism) from $ T' $ to $ T $. Then $ \phi_{V} $ is a morphism (resp. an isomorphism) of a Jacobi-Jordan algebra $ (V,\cdot_{T'}) $ to $ (V,\cdot_{T})$.
	\end{proposition}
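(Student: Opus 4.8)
The plan is to verify directly that $\phi_V$ preserves the deformed products, using the two compatibility conditions (\ref{36j}) and (\ref{37j}) that define a morphism from $T'$ to $T$. Recall from Proposition \ref{HJJpJJ} and (\ref{revasHJJ1}) that the induced Jacobi-Jordan products are $u\ast_{T'}v=\rho(T'(u))v+\rho(T'(v))u$ and $u\ast_T v=\rho(T(u))v+\rho(T(v))u$ for $u,v\in V$; these are what the statement denotes $\cdot_{T'}$ and $\cdot_T$. So the goal is to establish, for all $u,v\in V$, the identity
\begin{eqnarray}
\phi_V(u\ast_{T'}v)=\phi_V(u)\ast_T\phi_V(v).\nonumber
\end{eqnarray}

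First I would expand the left-hand side by linearity of $\phi_V$, obtaining $\phi_V(\rho(T'(u))v)+\phi_V(\rho(T'(v))u)$. Next I would apply the intertwining relation (\ref{37j}), taking $x=T'(u)$ and $x=T'(v)$ in turn, to push $\phi_V$ through $\rho$; this yields $\rho(\phi_A(T'(u)))\phi_V(v)+\rho(\phi_A(T'(v)))\phi_V(u)$. Finally I would invoke (\ref{36j}) in the form $\phi_A\circ T'=T\circ\phi_V$ to rewrite $\phi_A(T'(u))=T(\phi_V(u))$ and $\phi_A(T'(v))=T(\phi_V(v))$, so the expression becomes $\rho(T(\phi_V(u)))\phi_V(v)+\rho(T(\phi_V(v)))\phi_V(u)$, which is precisely $\phi_V(u)\ast_T\phi_V(v)$ by the definition of $\ast_T$. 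This shows $\phi_V$ is a morphism of Jacobi-Jordan algebras from $(V,\cdot_{T'})$ to $(V,\cdot_T)$.

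For the isomorphism case, if both $\phi_A$ and $\phi_V$ are invertible then $\phi_V$ is already a bijective algebra morphism; since the inverse of a bijective morphism of Jacobi-Jordan algebras is again a morphism, $\phi_V$ is an isomorphism from $(V,\cdot_{T'})$ to $(V,\cdot_T)$.

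I expect no genuine obstacle here: the argument is a short chain of substitutions, and the only point requiring care is applying the two compatibility conditions in the correct order, namely first the intertwining relation (\ref{37j}) to transport $\phi_V$ across $\rho$, and then (\ref{36j}) to identify $\phi_A\circ T'$ with $T\circ\phi_V$. Commutativity of the products transfers automatically and needs no separate verification.
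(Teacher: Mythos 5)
Your proposal is correct and follows essentially the same route as the paper's own proof: expand $u\ast_{T'}v$ by linearity of $\phi_V$, push $\phi_V$ through $\rho$ via the intertwining relation (\ref{37j}), and then replace $\phi_A\circ T'$ by $T\circ\phi_V$ using (\ref{36j}) to recognize $\phi_V(u)\ast_T\phi_V(v)$. Your explicit remark on the isomorphism case (that the inverse of a bijective morphism is again a morphism) is a small addition the paper leaves implicit, but the argument is the same.
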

	\begin{proof}
		Let $ u,v\in V $, then by (\ref{36j}),  we have:\\
		$ \begin{array}{lll}
			\phi_{V} (u\cdot_{T'} v)=\phi_{V}(\rho(T'u)v+\rho(T'v)u)=\phi_{V}(\rho(T'u)v)+\phi_{V}(\rho(T'v)u)\\
			=\rho(\phi_A(T'u))\phi_{V}(v)+\rho(\phi_A(T'v))\phi_{V}(u)=\rho(T(\phi_{V}(u)))\phi_{V}(v)+\rho(T(\phi_{V}(v)))\phi_{V}(u)\\
			=\phi_{V}(u)\cdot_{T}\phi_{V}(v).
		\end{array} $\\
		Then $ \phi_{V} $ is a morphism of left pre-Jacobi-Jordan algebra $ (V,\cdot_{T'}) $ to $ (V,\cdot_{T}) $ .
	\end{proof}
	\begin{definition}\label{D12j}
		Let $ T $ be a relative Rota-Baxter operator on a Jacobi-Jordan algebra $ (A,\ast) $ with respect to a representation $(V;\rho) $. Two linear deformations $ T^{1}_{t}=T+tJ_{1} $ and $ T^{2}_{t}=T+tJ_{2} $ of $T$ are said to be equivalent if there exists $ x\in C^0(V,A) $ such that $ (Id_{A}+tL_{x},Id_{V}+t\rho(x)) $ is a morphism from $ T_{t}^{2} $ to $ T_{t}^{1} $. In particular, a linear deformation $ T_{t}=T+tJ $ of a relative Rota-Baxter operator $ T $ is said to be trivial if there exists $ x\in C^0(V,A) $ such that $ (Id_{A}+tL_{x},Id_{V}+t\rho(x)) $ is a morphism from $ T_{t} $ to $ T $.  	
	\end{definition}
	Suppose that there exists $ x\in C^0(V,A) $ such that $ (Id_{A}+tL_{x},Id_{V}+t\rho(x)) $ is a morphism from $T_{t}^{2} $ to $ T_{t}^{1} $. Then,
	$ Id_{A}+tL_{x} $ is a Jacobi-Jordan algebras morphism of $ (A,\cdot) $ and (\ref{36j})-(\ref{37j}) hold.\\  
	$(Id_{A}+tL_{x})(y\cdot z)=(Id_{A}+tL_{x})(y)\cdot(Id_{A}+tL_{x})(z), \forall y,z\in A $,\\
	if and only if $ x $ satisfies
	\begin{eqnarray}
		&&(x\ast y)\ast (x\ast z)=0,   \, \forall \,\ y, z\in A,\label{39j}\\
		&&(x\ast y)\ast z+(z\ast x)\ast y-(y\ast z)\ast x=0 ,\,\ \forall\,\ y, z\in A.\label{40j}
	\end{eqnarray}
	Then by Eq.(\ref{36j}), we get \\\\
	$(T+tJ_{1})\circ(Id_{V}+t\rho(x))(v)=(Id_{A}+tL_{x})\circ(T+tJ_{2})(v),\hspace*{0.5cm}\forall v\in V$,\\\\
	which holds if and only if  $ x $ satisfies
	\begin{eqnarray}
		&&J_{2}-J_{1}= T\circ\rho(x)-L_{x}\circ T=-\partial_{\rho_{T}}(x), \label{41j}\\
		&&J_{1}\circ\rho(x)=L_{x}\circ J_{2}. \label{42j}
	\end{eqnarray}
	Also by Eq.(\ref{37j}), we obtain \\\\
	$ (Id_{V}+t\rho(x))\circ\rho(y)=\rho(y+tL_{x}(y))\circ(Id_{V}+t\rho(x)), $\\\\
	which holds if and only if $ x $ satisfies
	\begin{eqnarray}
		&&\rho(x\ast y)\circ\rho(x)=0, \,\ \forall y\in A,\label{43j}\\
		&&\rho(x\ast y)+\rho(y)\circ \rho(x)-\rho(x)\circ \rho(y)=0,\,\ \forall y\in A.\label{44j}
	\end{eqnarray}
	Note that Eq. (\ref{41j}) means that $$J_{2}-J_{1}=-\partial_{\rho_{T}}(x).$$ Thus, we have the following result:
	\begin{theorem}
		Let $T$ be a relative Rota-Baxter operator on  a Jacobi-Jordan algebra $(A,\ast)$ with
		respect to a representation $(V,\rho)$. If two linear deformations $T_{t}^1 = T +tJ_1$ and $T_{t}^2= T +tJ_2$ are equivalent, then $J_1$ and $J_2$ are in the same cohomology class of $H^1 (V,A)$.	
	\end{theorem}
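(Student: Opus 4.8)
The plan is to read off the cohomological content directly from the equivalence hypothesis, relying on the expansion of the morphism conditions already carried out above. By Definition \ref{D12j}, the equivalence of $T_{t}^1=T+tJ_1$ and $T_{t}^2=T+tJ_2$ furnishes an element $x\in C^0(V,A)$ such that $(Id_{A}+tL_{x},Id_{V}+t\rho(x))$ is a morphism from $T_{t}^2$ to $T_{t}^1$. First I would substitute this $x$ into the intertwining relation (\ref{36j}), namely $(T+tJ_1)\circ(Id_V+t\rho(x))=(Id_A+tL_x)\circ(T+tJ_2)$, and compare coefficients of $t$. The constant term reduces to the tautology $T=T$, while the linear-in-$t$ term is exactly the identity (\ref{41j}), which reads $J_2-J_1=T\circ\rho(x)-L_x\circ T=-\partial_{\rho_T}(x)$.

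The second step is to recognize $\partial_{\rho_T}(x)$ as a genuine $1$-coboundary. Since $x\in C^0(V,A)$ and $\partial_{\rho_T}(x)(v)=(L_x\circ T-T\circ\rho(x))(v)=T(v)\ast x-T(\rho(x)v)=d^0_{\rho_T}(x)(v)$, the map $\partial_{\rho_T}(x)$ is precisely the image of $x$ under the degree-zero differential $d^0_{\rho_T}=\delta^0_{\rho_T}$, hence lies in $B^1(V,A)$. Consequently (\ref{41j}) shows that $J_1-J_2=\partial_{\rho_T}(x)$ is a coboundary.

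It remains to check that $J_1$ and $J_2$ actually define classes in $H^1(V,A)$, that is, that they are $1$-cocycles. For this I would invoke that each $T_{t}^i$ is by hypothesis a relative Rota-Baxter operator for every $t$, so that $J_i$ generates a linear deformation and therefore satisfies (\ref{defor3}); as already observed, (\ref{defor3}) is equivalent to $J_i\in Der(V,A)$, and a derivation of $(V,\ast_T)$ with values in $(A,\rho_T)$ is exactly an element of $Z^1(V,A)$. Combining the two facts, $J_1,J_2\in Z^1(V,A)$ together with $J_1-J_2\in B^1(V,A)$, yields $[J_1]=[J_2]$ in $H^1(V,A)$. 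The only delicate point is the sign bookkeeping in this last step, namely confirming that the derivation identity (\ref{defor3}) matches the cocycle condition of the zigzag complex; but this is forced by the very definitions of $\ast_T$ and $\rho_T$ and requires no additional computation.
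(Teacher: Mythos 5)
Your proposal is correct and follows essentially the same route as the paper: the paper also expands the intertwining condition (\ref{36j}) in powers of $t$, extracts (\ref{41j}) as the linear coefficient, and concludes $J_2-J_1=-\partial_{\rho_T}(x)\in B^1(V,A)$, so $[J_1]=[J_2]$ in $H^1(V,A)$. Your additional verification that $J_1,J_2$ are closed (via (\ref{defor3}), i.e.\ they are derivations of $(V,\ast_T)$ with values in $(A,\rho_T)$) is a point the paper only treats implicitly in its discussion of linear deformations, and it is handled correctly in your argument.
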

	\begin{definition}
		Let $T$ be a relative Rota-Baxter operator on a Jacobi-Jordan algebra $(A,\ast )$ with 
		respect to a representation $(V, \rho).$ An element $x\in C^0(V,A)$ is called a Nijenhuis element associated to $T$ if $x$ satisfies (\ref{39j}), (\ref{40j}), (\ref{43j}), (\ref{44j}) and the equation
		\begin{eqnarray}
			L_x\circ T\circ\rho(x)-L_{x}\circ L_{x}\circ T=0.
			\label{N1}
		\end{eqnarray}
		Denote by $N_{ij}(T )$ the set of Nijenhuis elements associated to a relative Rota-Baxter operator $T.$
	\end{definition}
	The following lemma is very useful to prove the next theorem.
	\begin{lemma}\label{LemT1}
		Let $T$ be a relative Rota-Baxter operator on a Jacobi-Jordan algebra $(A,\cdot)$ with
		respect to a representation $(V,\rho)$. 
		Let $\phi_A: A\rightarrow A$ be a Jacobi-Jordan algebras isomorphism and $\phi_V: V\rightarrow V$
		an isomorphism of vector spaces such that Eqs.
		(\ref{37j}) holds. Then, 
		$	\varphi=\varphi_A^{-1}\circ T\circ\varphi_V$ 
		is a relative Rota-Baxter operator on the on the Jacobi-Jordan algebra $(A,\cdot)$ with
		respect to a representation $(V,\rho)$.
	\end{lemma}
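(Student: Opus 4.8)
The plan is to verify directly that $\varphi:=\phi_A^{-1}\circ T\circ\phi_V$ satisfies the defining identity (\ref{rbHJJ2}) of a relative Rota-Baxter operator, namely $\varphi(u)\ast\varphi(v)=\varphi\big(\rho(\varphi(u))v+\rho(\varphi(v))u\big)$ for all $u,v\in V$. Before computing I would record two consequences of the hypotheses. First, since $\phi_A$ is an isomorphism of Jacobi-Jordan algebras, its inverse $\phi_A^{-1}$ is again an algebra morphism, so $\phi_A^{-1}(x\ast y)=\phi_A^{-1}(x)\ast\phi_A^{-1}(y)$. Second, substituting $x\mapsto\phi_A^{-1}(y)$ in the intertwining relation (\ref{37j}) gives $\phi_V\circ\rho(\phi_A^{-1}(y))=\rho(y)\circ\phi_V$ (only the invertibility of $\phi_A$ is needed here).

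Next I would expand the left-hand side. Using that $\phi_A^{-1}$ is a morphism and then the Rota-Baxter identity (\ref{rbHJJ2}) for $T$ applied to the pair $\phi_V(u),\phi_V(v)$, I obtain
\begin{align*}
\varphi(u)\ast\varphi(v)&=\phi_A^{-1}\big((T\phi_V(u))\ast(T\phi_V(v))\big)\\
&=\phi_A^{-1}T\big(\rho(T\phi_V(u))\phi_V(v)+\rho(T\phi_V(v))\phi_V(u)\big).
\end{align*}

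For the right-hand side I would write $\varphi\big(\rho(\varphi(u))v+\rho(\varphi(v))u\big)=\phi_A^{-1}T\phi_V\big(\rho(\varphi(u))v+\rho(\varphi(v))u\big)$ and then push $\phi_V$ inside using the relation derived above. Since $\varphi(u)=\phi_A^{-1}(T\phi_V(u))$, the recorded consequence of (\ref{37j}) taken with $y=T\phi_V(u)$ yields $\phi_V\circ\rho(\varphi(u))=\rho(T\phi_V(u))\circ\phi_V$, so that $\phi_V\big(\rho(\varphi(u))v\big)=\rho(T\phi_V(u))\phi_V(v)$, and symmetrically for the $v$-term. Hence the argument of $\phi_A^{-1}T$ on the right-hand side equals $\rho(T\phi_V(u))\phi_V(v)+\rho(T\phi_V(v))\phi_V(u)$, matching the left-hand side exactly, which establishes (\ref{rbHJJ2}) for $\varphi$.

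The computation is short, and the only real subtlety—the step I expect to be the main point—is using (\ref{37j}) in its inverted form $\phi_V\circ\rho(\phi_A^{-1}(y))=\rho(y)\circ\phi_V$ to commute $\phi_V$ past $\rho(\varphi(u))$; once this intertwining is in place, matching the two sides is a direct substitution. It is worth noting that the invertibility of $\phi_V$ plays no role in the argument (only $\phi_A$ need be invertible), so the conclusion in fact holds under slightly weaker hypotheses than stated.
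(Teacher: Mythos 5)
Your proof is correct and follows essentially the same route as the paper's: expand $\varphi(u)\ast\varphi(v)$ via the morphism property of $\phi_A^{-1}$, apply the Rota-Baxter identity (\ref{rbHJJ2}) for $T$ at the pair $\phi_V(u),\phi_V(v)$, and then use the intertwining relation (\ref{37j}) (your ``inverted'' form is exactly the paper's application of (\ref{37j}) at $x=\varphi(u)$, since $\phi_A(\varphi(u))=T\phi_V(u)$) to move $\phi_V$ past $\rho$; the only cosmetic difference is that you expand both sides and meet in the middle rather than writing one chain of equalities. Your closing observation that invertibility of $\phi_V$ is never used is accurate and applies equally to the paper's argument.
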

\begin{proof}  First, observe that
\begin{equation}
	\varphi=\varphi_A^{-1}\circ T\circ\varphi_V\iff \varphi_A\circ \varphi=T\circ \varphi_V .
	\label{R1}
\end{equation}		
	Next, let $ u,v\in V$, then we  have\\
	$ \begin{array}{lll}
\varphi(u)\cdot\varphi(v)&=&\bigg(\varphi_A^{-1}\circ T\circ\varphi_V\bigg)(u)\cdot \bigg(\varphi_A^{-1}\circ T\circ\varphi_V\bigg)(v)  \mbox{(by the definition of $ \varphi$)} \\
&=&\varphi_A^{-1}\bigg(T(\varphi_{V}(u))\bigg)\cdot\varphi_A^{-1}\bigg(T(\varphi_{V}(v))\bigg)\\
&=&\varphi_A^{-1}\bigg(T(\varphi_{V}(u))\cdot T(\varphi_{V}(v))\bigg)(\text{ since $ \varphi_A^{-1} $ is a morphism})\\
&=&\varphi_A^{-1}\bigg(T\bigg(\rho(T(\varphi_{V}(u)))\varphi_{V}(v)+\rho(T(\varphi_{V}(v)))\varphi_{V}(u)\bigg)\bigg)\mbox{ (by $ \ref{rbHJJ2}$ )}\\
&=&(\varphi_A^{-1}\circ T)\bigg(\rho(T(\varphi_{V}(u)))\varphi_{V}(v)+\rho(T(\varphi_{V}(v)))\varphi_{V}(u)\bigg)\\
&=&(\varphi_A^{-1}\circ T)\bigg(\rho((T\circ\varphi_{V})(u))\varphi_{V}(v)+\rho((T\circ\varphi_{V})(v))\varphi_{V}(u)\bigg)\\
&=&(\varphi_A^{-1}\circ T)\bigg(\rho((\varphi_A\circ\varphi)(u))\varphi_{V}(v)+\rho((\varphi_A\circ\varphi)(v))\varphi_{V}(u)\bigg) \mbox{(by \ref{R1} )}\\
&=&(\varphi_A^{-1}\circ T)\bigg(\rho((\varphi_A(\varphi(u)))\varphi_{V}(v)+\rho((\varphi_A(\varphi(v)))\varphi_{V}(u)\bigg) \\
	\end{array} $\\
$ \begin{array}{lll}
&=&(\varphi_A^{-1}\circ T)\bigg(\varphi_{V}\circ\rho(\varphi(u))v+\varphi_{V}\circ\rho(\varphi(v))u\bigg)\mbox{ (by \ref{37j})} \\
&=&(\varphi_A^{-1}\circ T\circ \varphi_{V})\bigg(\rho(\varphi(u))v+\rho(\varphi(v))u\bigg)\\
&=&\varphi\bigg(\rho(\varphi(u))v+\rho(\varphi(v))u\bigg).
	\end{array} $ \\
	Therefore, $	\varphi=\varphi_A^{-1}\circ T\circ\varphi_V$ 
	is a relative Rota-Baxter operator on the Jacobi-Jordan algebra $(A,\cdot)$ with
	respect to a representation $(V,\rho)$.	
\end{proof}
	By Eqs. (\ref{39j})-(\ref{44j}), it is obvious that a trivial linear deformation gives rise to a Nijenhuis
element. Conversely, a Nijenhuis element can also generate a trivial linear deformation as the
following theorem shows.
\begin{theorem}\label{thJP}
	Let $T$ be a relative Rota-Baxter operator on a Jacobi-Jordan algebra $(A,\cdot)$ with
	respect to a representation $(V,\rho).$ Then for any $x\in N_{ij}(T ),$ $T_t=T + tJ$ with 
	$J=-\partial_{\rho_{T}}(x)$	is a
	trivial linear deformation of the relative Rota-Baxter operator $T$.
\end{theorem}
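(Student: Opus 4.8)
The plan is to produce, for the given Nijenhuis element $x\in N_{ij}(T)$, the explicit pair $(\phi_A,\phi_V):=(Id_A+tL_x,\ Id_V+t\rho(x))$ and to verify that it is a morphism from $T_t=T+tJ$ to $T$ in the sense of Definition \ref{D9j}, which is exactly what Definition \ref{D12j} demands for triviality. Recall that $x\in N_{ij}(T)$ means that $x$ satisfies (\ref{39j}), (\ref{40j}), (\ref{43j}), (\ref{44j}) together with the Nijenhuis equation (\ref{N1}), and that, by (\ref{41j}), the cochain $J=-\partial_{\rho_T}(x)$ unwinds to $J=T\circ\rho(x)-L_x\circ T$.

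Two of the three morphism conditions would then come for free. By the computations recorded just before the definition of a Nijenhuis element, conditions (\ref{39j})--(\ref{40j}) are precisely what is needed for $\phi_A=Id_A+tL_x$ to be a Jacobi-Jordan algebra morphism, while (\ref{43j})--(\ref{44j}) are precisely the conditions under which the compatibility (\ref{37j}) holds for the pair $(\phi_A,\phi_V)$. Thus only the remaining relation (\ref{36j}), namely $T\circ\phi_V=\phi_A\circ T_t$, requires checking.

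For (\ref{36j}) I would expand both sides as polynomials in $t$. On the one hand $T\circ\phi_V=T+t\,T\circ\rho(x)$; on the other hand $\phi_A\circ T_t=T+t\big(J+L_x\circ T\big)+t^2\,L_x\circ J$. The constant terms agree; the coefficients of $t$ agree because $J+L_x\circ T=T\circ\rho(x)$ is exactly the defining identity $J=T\circ\rho(x)-L_x\circ T=-\partial_{\rho_T}(x)$; and the coefficient of $t^2$ vanishes since $L_x\circ J=L_x\circ T\circ\rho(x)-L_x\circ L_x\circ T=0$ is precisely the Nijenhuis equation (\ref{N1}). Hence (\ref{36j}) holds, and $(\phi_A,\phi_V)$ is a morphism from $T_t$ to $T$.

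It then remains to certify that $T_t$ is genuinely a linear deformation, i.e. a relative Rota-Baxter operator for every $t\in\mathbb{K}$. Here I would invoke Lemma \ref{LemT1}: since $\det(Id_A+tL_x)$ and $\det(Id_V+t\rho(x))$ are polynomials in $t$ equal to $1$ at $t=0$, for all but finitely many $t$ both $\phi_A$ and $\phi_V$ are isomorphisms, $\phi_A$ being moreover a Jacobi-Jordan algebra isomorphism with (\ref{37j}) holding; then $\varphi=\phi_A^{-1}\circ T\circ\phi_V$ is a relative Rota-Baxter operator, and (\ref{36j}) rewritten as $\phi_A\circ T_t=T\circ\phi_V=\phi_A\circ\varphi$ forces $T_t=\varphi$ after cancelling the invertible $\phi_A$. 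Thus $T_t$ satisfies (\ref{rbHJJ2}) for all but finitely many $t$; as that identity is, for fixed $u,v$, a polynomial in $t$ of degree at most two and $\mathbb{K}$ is infinite (characteristic $0$), it must hold for every $t$, giving triviality. I expect the main obstacle to be exactly this last step: the isomorphism $\phi_A$ required by Lemma \ref{LemT1} need not be invertible at the finitely many bad values of $t$, so the density argument over the infinite field $\mathbb{K}$ is what bridges the gap. An alternative route avoiding the lemma is to verify (\ref{defor2})--(\ref{defor3}) directly, where (\ref{defor3}) is automatic because $J=-\partial_{\rho_T}(x)$ is a coboundary (hence a cocycle), leaving (\ref{defor2})---that $J$ itself be a relative Rota-Baxter operator---as the one genuinely computational point to extract from the Nijenhuis conditions.
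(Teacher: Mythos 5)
Your proposal is correct and follows essentially the same route as the paper's own proof: the same pair $(Id_A+tL_x,\,Id_V+t\rho(x))$, the same use of (\ref{39j})--(\ref{40j}) and (\ref{43j})--(\ref{44j}) for the morphism and compatibility conditions, the same derivation of (\ref{36j}) from $J=T\circ\rho(x)-L_x\circ T$ together with $L_x\circ J=0$ coming from the Nijenhuis equation (\ref{N1}), and the same appeal to Lemma \ref{LemT1} to conclude that $T_t$ is a relative Rota-Baxter operator. Your closing polynomial-density argument over the infinite field $\mathbb{K}$ is a welcome sharpening of the paper's informal ``for $t$ sufficiently small'' step, but it does not change the structure of the argument.
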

\begin{proof}
Suppose that $ T $ be a Rota-Baxter relative operator on a Jacobi-Jordan algebra $ (A,\ast) $ with respect to a representation $ (V,\rho) $ and pick $ (x,v)\in N_{ij}(T)\times V $. Then, we have
$$ 
Jv=-\partial_{\rho_{T}}(x)v
=T(\rho(x)v)-T(v)\ast x
=T\circ\rho(x)v-L_{x}\circ Tv.
 $$
i.e.,
 \begin{eqnarray}
J=T\circ\rho(x)-L_{x}\circ T
\label{J1}.
\end{eqnarray} 
Therefore, by (\ref{N1}), we obtain

 \begin{eqnarray}
L_{x}\circ J=0.
\label{J2}
\end{eqnarray}
Next, from the relation $ T_{t}=T+tJ $ and  (\ref{J2}), we have \begin{eqnarray}
L_{x}\circ T_{t}=L_{x}\circ T.
\label{J3}
\end{eqnarray}
Using again the relation $ T_{t}=T+tJ $, we have\\\\
$ \begin{array}{lll}
T_{t}-T&=&tJ\\
T_{t}-T&=&t\bigg(T\circ\rho(x)-L_{x}\circ T\bigg)\text{( by \ref{J1})}\\
T_{t}-T&=&t\bigg(T\circ\rho(x)-L_{x}\circ T_{t}\bigg)\text{ (by \ref{J3})}\\
T_{t}-T&=&tT\circ\rho(x)-tL_{x}\circ T_{t},\\
T_{t}+tL_{x}\circ T_{t}&=&tT\circ\rho(x)+T.
\end{array} $\\
Thus, \begin{eqnarray}
\bigg(Id_{A}+tL_{x}\bigg)\circ T_{t}=T\circ \bigg(Id_{V}+t\rho(x)\bigg).
\label{J4}
\end{eqnarray}
Let $y,z\in A$ and $ t\in\mathbb{K}.$ By $ (\ref{39j}) $ and $  (\ref{40j}) $, we have \\\\
$ \begin{array}{lll}
y\ast z+tx\ast (y\ast z)=y\ast z+ty\ast(x\ast z)+t(x\ast y)\ast z+t^{2}(x\ast y)\ast(x\ast z)
\end{array} $\\
and therefore, \begin{eqnarray}
\bigg(Id_{A}+tL_{x}\bigg)(y\ast z)=\bigg(Id_{A}+tL_{x}\bigg)(y)\ast \bigg(Id_{A}+tL_{x}\bigg)(z).
\label{J5}
\end{eqnarray}
By (\ref{43j}) and (\ref{44j}), we have\\\\
$ \begin{array}{lll}
\rho(y)+t\rho(x)\circ\rho(y)=\rho(y)+t\rho(y)\circ\rho(x)+t\rho(x\cdot y)+t^{2}t\rho(x\cdot y)\circ\rho(x)
\end{array} $\\
\\
 and therefore, \begin{eqnarray}
 \bigg(Id_{V}+t\rho(x)\bigg)\circ \rho(y)=\rho\bigg(\bigg(Id_{A}+tL_{x}\bigg)(y)\bigg)\circ \bigg(Id_{V}+t\rho(x)\bigg).
 \label{J6}
 \end{eqnarray}
For $ t $ sufficiently small, we see that $ Id_{A}+tL_{x} $ is a Jacobi-Jordan algebras isomorphism and $ Id_{V}+t\rho(x) $
is an isomorphism of vector spaces. Thus, by $ (\ref{J4}) $ we have\\
 $$ T_{t}=\bigg(Id_{A}+tL_{x}\bigg)^{-1}\circ T\circ \bigg(Id_{V}+t\rho(x)\bigg). $$ 
 By Lemma \ref{LemT1}, we deduce that $ T_{t} $ is a relative Rota-Baxter operator on the Jacobi-Jordan algebra
 $ (A,\ast) $ with respect to the representation $ (V,\rho) $, for t sufficiently small. Thus, $ J $ given by Eq. \ref{J1} satisfies the conditions (\ref{defor2}) and (\ref{defor3}).
 Hence, for any $x\in N_{ij}(T ),$ $T_t=T + tJ$ with 
  $J=-\partial_{\rho_{T}}(x)$ 	is a
 trivial linear deformation of the relative Rota-Baxter operator $T$.
\end{proof}

	\begin{definition}
		Let $(A,\ast)$ be a  Jacobi-Jordan algebra. We say that a bilinear map $\psi\in C^2(A,A)$ generate a linear  deformation of the multiplication $\ast$ of the Jacobi-Jordan algebra $(A,\ast)$ if the $t$-parametrized family of bilinear operations
		\begin{eqnarray}
			\mu_t(u, v)=u\ast v+t\psi(u,v) \mbox{ for $t\in\mathbb{K}$ }
		\end{eqnarray}
		gives to $A$ the structure of Jacobi-Jordan algebras i.e. $(A,\mu_t)$ is a Jacobi-Jordan algebra for all $t\in \mathbb{K}.$
	\end{definition}
	By computing the Jordan-Jacobi identity of $\mu_t$ and using Jacobi identity for $\ast$ we obtain:
	\begin{eqnarray}
		&&t\Big(x\ast\psi(y,z)+y\ast\psi(z,x)+z\ast\psi(x,y)+\psi(x,y\ast z)+\psi(y,z\ast x)\nonumber\\
		&&+\psi(z,x\ast y)\Big)+t^2\Big(\psi(\psi(x,y),z)+\psi(\psi(y,z),x)+\psi(\psi(z,x),y)\Big).\nonumber
	\end{eqnarray}
	Hence, $(A,\mu_t)$ is a Jacobi-Jordan algebra for all $t\in \mathbb{K}$ if and only if:
	\begin{eqnarray}
		&&\psi(x,y)=\psi(y,x), \label{defor4b}\\
		&& \psi(\psi(x,y),z)+ \psi(\psi(y,z),x)+ \psi(\psi(z,x),y)=0, \label{defor5}\\
		&&x\ast\psi(y,z)+y\ast\psi(z,x)+z\ast\psi(x,y)\nonumber\\
		&&+\psi(x,y\ast z)+\psi(y,z\ast x)+\psi(z,x\ast y)=0.\label{defor6}
	\end{eqnarray}
	Obviously, (\ref{defor4b}) and (\ref{defor5}) mean that $\psi$ must itself defines a Jacobi algebra structure on $A$.
	Furthermore, (\ref{defor6}) means that $\psi$ is closed with respect to the adjoint representation $(A, L)$ , i.e. $d^2\psi=0.$
	\begin{definition}
		Let $(A, \ast)$ be a Jacobi-Jordan algebra. Two linear deformations 
		$\mu^1_t=\ast+t\psi_1$ and $\mu^2_t=\ast+t\psi_2$ are said to be equivalent if there exists
		a linear operator $N\in gl(A)$ such that 
		$T_t=Id+tN$ is a Jacobi-Jordan algebras
		morphism from $(A, \mu^2_t)$ to $(A, \mu^1_t)$. In particular, a linear deformation
		$\mu_t=\ast+t\psi$ of a Jacobi-Jordan algebra $(A, \ast)$ is said to be trivial if there
		exists a linear operator $N\in gl(A)$ such that for all $t\in\mathbb{K}$
		$T_t=Id+tN$ is  a Jacobi-Jordan algebras
		morphism from $(A, \mu_t)$ to $(A, \ast).$
	\end{definition}
	For all $t\in\mathbb{K}$
	$T_t=Id+tN$ being a
	morphism of algebras is equivalent to
	\begin{eqnarray}
		&&\psi_2(x,y)-\psi_1(x,y)= x\ast N(y)+N(x)\ast y-N(x\ast y), \label{eqdf1}\\
		&&\psi_1(x,N(y))+\psi_1(N(x),y)=N(\psi_2(x,y))-N(x)\ast N(y),\label{eqdf11}\\
		&&\psi_1(N(x),N(y))=0.
	\end{eqnarray}
	Observe that (\ref{eqdf1}) means that $\psi_2-\psi_1=\delta^1 N\in B^2(A,A).$ Hence, it follows:
	\begin{theorem}
		Let $(A, \ast)$ be a Jacobi-Jordan algebra. If two linear deformations 
		$\mu^1_t=\ast+t\psi_1$ and $\mu^2_t=\ast+t\psi_2$ are equivalent, then $\psi_1$ and $\psi_2$ are in the
		same cohomology class of $H^2(A, A)$.
	\end{theorem}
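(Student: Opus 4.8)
The plan is to deduce the statement directly from Eq.~(\ref{eqdf1}), which already isolates the first-order part of the equivalence condition. By definition of equivalence there is a linear map $N \in gl(A)$ for which $T_t = Id + tN$ is a Jacobi-Jordan morphism from $(A, \mu^2_t)$ to $(A, \mu^1_t)$ for all $t \in \mathbb{K}$. Expanding the morphism identity $T_t(\mu^2_t(x,y)) = \mu^1_t(T_t x, T_t y)$ and matching the coefficients of $t^0, t^1, t^2, t^3$ yields, respectively, the tautology $x \ast y = x \ast y$, the relation (\ref{eqdf1}), the relation (\ref{eqdf11}), and $\psi_1(Nx, Ny) = 0$; only (\ref{eqdf1}) is needed for the conclusion.

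Next I would record that $\psi_1$ and $\psi_2$ are both $2$-cocycles. Indeed, each $\mu^i_t$ is a linear deformation of $(A, \ast)$, so by (\ref{defor6}) we have $d^2 \psi_i = 0$, i.e.\ $\psi_1, \psi_2 \in Z^2(A,A)$. Then I would identify the right-hand side of (\ref{eqdf1}) as a coboundary. Rewriting $N(x) \ast y = y \ast N(x)$ by commutativity of $\ast$, that right-hand side becomes $x \ast N(y) + y \ast N(x) - N(x \ast y)$, which is exactly $\delta^1 N (x,y)$, the value of the zigzag coboundary $\delta^1$ on $N$ read in the adjoint representation $(A,L)$ (substitute $\rho_T = L$ and $\ast_T = \ast$ in the formula for $\delta^n_{\rho_T}$ with $n = 1$). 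Hence $\psi_2 - \psi_1 = \delta^1 N \in B^2(A,A)$.

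Combining the two facts, $\psi_1$ and $\psi_2$ are cocycles whose difference is a coboundary, so $[\psi_1] = [\psi_2]$ in $H^2(A,A) = Z^2(A,A)/B^2(A,A)$, as claimed; here $B^2 \subseteq Z^2$ because $d^2 \circ \delta^1 = 0$, already noted in the excerpt, so the quotient is well defined. I do not anticipate a genuine obstacle: the argument is a routine ``cocycles differing by a coboundary lie in the same class'' computation, and the only delicate point is bookkeeping of the signs and the use of commutativity so that the right-hand side of (\ref{eqdf1}) matches the sign convention of $\delta^1$ exactly.
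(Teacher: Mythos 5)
Your proposal is correct and follows essentially the same route as the paper: the paper likewise expands the morphism condition $T_t\circ\mu^2_t=\mu^1_t\circ(T_t\times T_t)$ in powers of $t$ to obtain exactly the relations (\ref{eqdf1}), (\ref{eqdf11}) and $\psi_1(N(x),N(y))=0$, and then concludes from (\ref{eqdf1}) alone that $\psi_2-\psi_1=\delta^1 N\in B^2(A,A)$. Your additional remarks (that $\psi_1,\psi_2\in Z^2(A,A)$ by (\ref{defor6}) and that $B^2\subseteq Z^2$ since $d^2\circ\delta^1=0$) are left implicit in the paper but are consistent with it and make the argument more complete.
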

	Observe that a deformation $\mu_t=\ast+t\psi$ of a Jacobi-Jordan algebra $(A, \ast)$ is  trivial i.e., there
	exists a linear operator $N\in gl(A)$ such that for all $t\in\mathbb{K}$
	$T_t=Id+tN$ is  a Jacobi-Jordan algebras
	morphism from $(A, \mu_t)$ to $(A, \ast)$ if and only if
	\begin{eqnarray}
		&&\psi(x,y)= x\ast N(y)+N(x)\ast y-N(x\ast y), \label{eqdft1}\\
		&&N(\psi(x,y))=N(x)\ast N(y).
	\end{eqnarray}
	These two identities allow to give the following definition.
	\begin{definition}
		Let $(A, \ast)$ be a Jacobi-Jordan algebra.
		A linear operator $N\in C^1(A,A)$ is called a Nijienhuis operator if
		\begin{eqnarray}
			N(u)\ast N(v)=N(u\ast_N v), \forall u,v\in A, \label{njj}
		\end{eqnarray}
		where
		\begin{eqnarray}
			u\ast_N v:=N(u)\ast v+u\ast N(v)-N(u\ast v), , \forall u,v\in A.\label{opn}
		\end{eqnarray}
	\end{definition}
	By straightforward computations, we check the following.
	\begin{proposition}\label{njajo}
		Let $(A, \ast)$ be a Jacobi-Jordan algebra and $N\in C^1(A,A)$ be a Nijienhuis operator of $(A, \ast)$. Then,
		$A_N:=(A,\ast_N)$ is a Jacobi-Jordan algebra and $N$ is a morphism of $A_N$ to $(A, \ast)$.
	\end{proposition}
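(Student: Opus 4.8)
The morphism statement needs no work: saying that $N\colon A_N\to(A,\ast)$ is a morphism means exactly $N(u\ast_N v)=N(u)\ast N(v)$, which is the defining identity (\ref{njj}) of a Nijenhuis operator. Commutativity of $\ast_N$ is equally quick from (\ref{opn}): since $\ast$ is commutative, $N(u)\ast v=v\ast N(u)$, $u\ast N(v)=N(v)\ast u$ and $N(u\ast v)=N(v\ast u)$, whence $u\ast_N v=v\ast_N u$. Thus the whole content lies in the Jacobi identity (\ref{JJi}) for $\ast_N$, and my plan is to prove it by a direct but carefully bookkept expansion that reduces everything to identities already available in $(A,\ast)$.

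First I would expand $(x\ast_N y)\ast_N z$ by applying the definition (\ref{opn}) twice, each time using (\ref{njj}) to rewrite $N(x\ast_N y)$ as $N(x)\ast N(y)$. This produces seven terms, so that $J_N(x,y,z):=\circlearrowleft_{(x,y,z)}(x\ast_N y)\ast_N z$ becomes a sum of twenty-one terms, which I would sort according to where the operator $N$ appears.

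The key observation is that the nine terms having no outer $N$ and carrying $N$ on two distinct variables regroup, using only commutativity of $\ast$, into the three Jacobians $J(N(x),N(y),z)+J(x,N(y),N(z))+J(N(x),y,N(z))$, each of which vanishes by (\ref{JJi}) in $(A,\ast)$. Among the remaining twelve terms: for the six of the form $-N\big((N(x)\ast y)\ast z\big)$ and cyclic, I would, by linearity of $N$, collect their inner products and complete these into Jacobians of $(A,\ast)$; by (\ref{JJi}) this reduces the six to three terms $N\big((x\ast y)\ast N(z)\big)$ and cyclic. Then, to the three mixed terms $-N(x\ast y)\ast N(z)$ and cyclic I would apply (\ref{njj}) once more, in the expanded form $N(u)\ast N(v)=N(N(u)\ast v)+N(u\ast N(v))-N(N(u\ast v))$ with $u=x\ast y$, $v=z$. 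The first two summands of each such expansion cancel exactly the three terms $N\big(N(x\ast y)\ast z\big)$ coming from the outer-$N$ part of the original expansion and the reduced six-term contribution $N\big((x\ast y)\ast N(z)\big)$, so that after complete cancellation the only residue is $\circlearrowleft_{(x,y,z)}N\big(N((x\ast y)\ast z)\big)=N\big(N(J(x,y,z))\big)$, which is $0$ by (\ref{JJi}). Hence $J_N\equiv 0$ and $(A,\ast_N)$ is a Jacobi-Jordan algebra.

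I expect the only genuine difficulty to be the bookkeeping: keeping the twenty-one terms straight, grouping them so that the partial Jacobians of $(A,\ast)$ are recognized and discarded, and inserting the Nijenhuis relation in precisely the right places so that the surviving terms cancel pairwise. No ingredient beyond (\ref{JJi}), (\ref{njj}) and commutativity of $\ast$ enters; the whole art is in ordering the reductions so that the computation terminates at the manifestly vanishing expression $N\big(N(J(x,y,z))\big)$.
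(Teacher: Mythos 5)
Your proposal is correct and follows essentially the same route as the paper's proof: the same seven-term expansion of $(x\ast_N y)\ast_N z$, the same crucial second application of (\ref{njj}) to the mixed terms $N(x\ast y)\ast N(z)$ (the paper writes it as $N((x\ast y)\ast_N z)$ and then expands, which is your "expanded form"), and the same final regrouping of everything into Jacobians of $(A,\ast)$ that vanish by (\ref{JJi}), leaving only $N(N(J(x,y,z)))=0$. The only difference is bookkeeping order — you discharge the Jacobians earlier, while the paper keeps all terms symbolic until one final regrouping — which is immaterial.
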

\begin{proof}
	Let $ x,y,z\in A $, denode by $ J_{N} $ the Jacobian of $ A_{N} $ and $ J $ the Jacobian of $ (A,\cdot) $ . Then we have $ x\ast_N y=N(x)\ast y+x\ast N(y)-N(x\ast y) $, $ N(x\ast_N y)=N(x)\ast N(y) $ and \\\\
	$ \begin{array}{lll}
		J_{N}(x,y,z)=\circlearrowleft_{(x,y,z)}(x\ast_Ny)\ast_Nz\\
		=\circlearrowleft_{(x,y,z)}\bigg(N(x\ast_Ny)\ast z+(x\ast_Ny)\ast N(z)-N((x\ast_Ny)\ast z)\bigg)\\
		=\circlearrowleft_{(x,y,z)}\bigg[\bigg(N(x)\ast N(y)\bigg)\ast z+\bigg(N(x)\ast y+x\ast N(y)-N(x\ast y)\bigg)\ast N(z)\\
		-N\bigg(\bigg(N(x)\ast y+x\ast N(y)-N(x\ast y)\bigg)\ast z\bigg)\bigg]\\	
		=	\circlearrowleft_{(x,y,z)}\bigg[\bigg(N(x)\ast N(y)\bigg)\ast z+\bigg(N(x)\ast y\bigg)\ast N(z)+\bigg(x\ast N(y)\bigg)\ast N(z)\\
			\end{array} $\\
	$ \begin{array}{lll}
		-\bigg(N(x\ast y)\ast N(z)\bigg)
		-N\bigg(\bigg(N(x)\ast y+x\ast N(y)-N(x\ast y)\bigg)\ast z\bigg)\bigg]\\
		=	
		\circlearrowleft_{(x,y,z)}\bigg[\bigg(N(x)\ast N(y)\bigg)\ast z+\bigg(N(x)\ast y\bigg)\ast N(z)+\bigg(x\ast N(y)\bigg)\ast N(z)\\
		-N\bigg((x\ast y)\ast_N z\bigg)
		-N\bigg(\bigg(N(x)\ast y+x\ast N(y)-N(x\ast y)\bigg)\ast z\bigg)\bigg]\\
		=
		\circlearrowleft_{(x,y,z)}\bigg[\bigg(N(x)\ast N(y)\bigg)\ast z+\bigg(N(x)\ast y\bigg)\ast N(z)+\bigg(x\ast N(y)\bigg)\ast N(z)\\
		-N\bigg\{(x\ast y)\ast_N z
		+\bigg(N(x)\ast y+x\ast N(y)-N(x\ast y)\bigg)\ast z\bigg\}\bigg]\\
		=	
		\circlearrowleft_{(x,y,z)}\bigg[\bigg(N(x)\ast N(y)\bigg)\ast z+\bigg(N(x)\ast y\bigg)\ast N(z)+\bigg(x\ast N(y)\bigg)\ast N(z)\\
		-N\bigg\{N(x\ast y)\ast z+(x\ast y)\ast N(z)-N\bigg((x\ast y)\ast z\bigg)
		+\bigg(N(x)\ast y+x\ast N(y)-N(x\ast y)\bigg)\ast z\bigg\}\bigg]\\
		=
		\circlearrowleft_{(x,y,z)}\bigg[\bigg(N(x)\ast N(y)\bigg)\ast z+\bigg(N(x)\ast y\bigg)\ast N(z)+\bigg(x\ast N(y)\bigg)\ast N(z)\\
		-N\bigg\{(x\ast y)\ast N(z)
		+\bigg(N(x)\ast y\bigg)\ast z+\bigg (x\ast N(y)\bigg)\ast z-N\bigg((x\ast y)\ast z\bigg)\bigg\}\bigg]\\
		=J\bigg(N(x),N(y),z\bigg)+J\bigg(N(x),y,N(z)\bigg)+J\bigg(x,N(y),N(z)\bigg)\\
		-N\bigg\{J\bigg(x,y,N(z)\bigg)+J\bigg(N(x),y,z\bigg)+J\bigg(x,N(y),z\bigg)-N\bigg(J\bigg(x,y,z\bigg)\bigg)\bigg\}=0.			\end{array} $\\\\
	We have also
	$ x\ast_N y=N(x)\ast y+x\ast N(y)-N(x\ast y )=N(y)\ast x+y\ast N(x)-N(y\ast x )=y\ast_N x$\\
	Therefore $A_N:=(A,\ast_N)$ is a Jacobi-Jordan algebra and $N$ is a morphism of $A_N$ to $(A, \ast)$.
\end{proof}
	\begin{theorem}
		Let $(A, \ast)$ be a Jacobi-Jordan algebra and $N\in C^1(A,A)$ be a Nijienhuis operator. Then, a deformation $\mu_t$ of $(A,\ast)$ can be obtained by putting
		\begin{eqnarray}
			\psi(u,v):=\delta^1 N(u,v)=u\ast_N v.\label{nabil}
		\end{eqnarray}
		Furthermore, this deformation is trivial.
	\end{theorem}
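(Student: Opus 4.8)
The plan is to treat the two assertions separately, drawing entirely on the characterizations already set up. First I would show that $\mu_t=\ast+t\psi$ is a Jacobi-Jordan algebra for every $t$, i.e.\ that $\psi$ generates a deformation; then I would exhibit an explicit equivalence witnessing triviality. Throughout I write $\psi(u,v)=u\ast_N v$ and use Proposition \ref{njajo} together with the Nijenhuis identity (\ref{njj}), so that essentially no fresh computation is required.

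For the deformation part, I would verify the three conditions (\ref{defor4b}), (\ref{defor5}) and (\ref{defor6}) that were shown equivalent to $\psi$ generating a linear deformation. Condition (\ref{defor4b}) is the symmetry $\psi(x,y)=\psi(y,x)$, which is exactly the commutativity of $\ast_N$ established in Proposition \ref{njajo}. Condition (\ref{defor5}), namely $\psi(\psi(x,y),z)+\psi(\psi(y,z),x)+\psi(\psi(z,x),y)=0$, is the Jacobian $J_N(x,y,z)$ of $A_N=(A,\ast_N)$, which vanishes because $A_N$ is a Jacobi-Jordan algebra by Proposition \ref{njajo}.

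The remaining condition (\ref{defor6}) is the only one calling for a different idea. Rather than expanding it by hand, I would use that, by (\ref{nabil}), $\psi=\delta^1 N$ is a coboundary for the adjoint representation $(A,L)$. Since in the zigzag complex the relevant differentials compose to zero (the Jacobi-Jordan analogue of the identity $d_{\rho_T}^n\circ\delta_{\rho_T}^{n-1}=0$ recalled earlier, which is what makes $H^2(A,A)=Z^2/B^2$ well defined), any coboundary is automatically closed; hence $d^2\psi=d^2\delta^1 N=0$, which is precisely (\ref{defor6}). Consequently $(A,\mu_t)$ is a Jacobi-Jordan algebra for all $t$.

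Finally, for triviality I would take the Nijenhuis operator $N$ itself and set $T_t=Id+tN$, then check the two conditions, (\ref{eqdft1}) and $N(\psi(x,y))=N(x)\ast N(y)$, that characterize a trivial deformation. The first is immediate from the definition (\ref{opn}): $\psi(x,y)=x\ast_N y=x\ast N(y)+N(x)\ast y-N(x\ast y)$. The second reads $N(\psi(x,y))=N(x\ast_N y)=N(x)\ast N(y)$, which is exactly the Nijenhuis identity (\ref{njj}). Thus $T_t$ is a Jacobi-Jordan algebra morphism from $(A,\mu_t)$ to $(A,\ast)$ for every $t$, and the deformation is trivial. The one step warranting care is the cocycle condition (\ref{defor6}): recognizing $\psi$ as an exact $2$-cochain and invoking $d^2\circ\delta^1=0$ avoids an otherwise lengthy direct verification.
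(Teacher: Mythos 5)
Your proposal is correct and follows essentially the same route as the paper: conditions (\ref{defor4b}) and (\ref{defor5}) via Proposition \ref{njajo}, the cocycle condition (\ref{defor6}) via $d^{2}\circ\delta^{1}=0$, and triviality of the deformation via the Nijenhuis identity (\ref{njj}). The only cosmetic difference is that for triviality you invoke the already-established characterization (\ref{eqdft1}) together with $N(\psi(x,y))=N(x)\ast N(y)$, whereas the paper re-expands $(Id+tN)\big(\mu_t(x,y)\big)-(Id+tN)(x)\ast(Id+tN)(y)$ directly; the ingredients are identical either way.
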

\begin{proof}
	By the definition of $ \psi $, it is clear that $ d^{2}\psi=0 $ since $d^{2}\circ \delta^1=0.$\\
	Therefore $ \psi $ is a $2$-cocycle of the regular representation of Jacobi-Jordan algebra $ (A,\ast). $ Next, $ \psi $ satifies (\ref{defor4b}) and (\ref{defor5}) since $ (A,\ast_N) $ is a Jacobi-Jordan algebra by Proposition \ref{njajo}. Finally, by straightforward computations, we have\\
	\\
	$ \begin{array}{lll}
		(Id+tN)\bigg(x\ast y+t\psi(x,y)\bigg)-(Id+tN)(x)\ast(Id+tN)(y)=x\ast y+tN(x\ast y)+t\psi(x,y)\\
		+t^{2}N(\psi(x,y))-x\ast y-t\bigg(N(x)\ast y+x\ast N(y)\bigg)
		-t^{2}N(x)\ast N(y)\\
		=tN(x\ast y)+t\psi(x,y)
		+t^{2}N(\psi(x,y))-t\bigg(N(x)\ast y+x\ast N(y)\bigg)\\
		-t^{2}N(\psi(x,y)) 
		\text{ (by \ref{nabil} and (\ref{njj})  )}\\
		=tN(x\ast y)+t\psi(x,y)
		-t\bigg(N(x)\ast y+x\ast N(y)\bigg)\\
		=tN(x\ast y)+t\bigg(N(x)\ast y+x\ast N(y)-N(x\ast y)\bigg)
		-t\bigg(N(x)\ast y+x\ast N(y)\bigg)\text{ (by \ref{nabil})}\\
			\end{array} $\\
	$ \begin{array}{lll}
		=t\bigg(N(x)\ast y+x\ast N(y)\bigg)
		-t\bigg(N(x)\ast y+x\ast N(y)\bigg)\\
		=0.
	\end{array} $ \\\\
	It follows that
	$$ (Id+tN)\big(\mu_t(x,y)\big)=(Id+tN)(x)\ast(Id+tN)(y). $$
	Hence , this deformation is trivial.
\end{proof}
	Now, let give the link between these two deformations.
	\begin{proposition}
		Let $(A,\ast)$ be a Jacobi-Jordan algebra and $ T:A \rightarrow A$ be a relative Rota-Baxter operator with respect  to a representation $(V,\rho).$ If a linear map $\mathcal{Z}:V\rightarrow A$ generates a linear deformation of $T$, then the bilinear map 
		$\psi_{\mathcal{Z}} :V^{2}\rightarrow V $ defined by $$\psi_{\mathcal{Z}}(u,v)=\rho(\mathcal{Z}u)v+\rho(\mathcal{Z}v)u, \forall u,v \in V,$$ generates a  linear deformation of the associated Jacobi-Jordan algebra $(V,\ast_{T}).$ 
	\end{proposition}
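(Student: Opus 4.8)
The plan is to avoid verifying the deformation conditions (\ref{defor4b}), (\ref{defor5}) and (\ref{defor6}) directly, and instead transport the entire deformation of $T$ through Proposition \ref{HJJpJJ}. The starting observation is that, by the very definition of a linear deformation, saying that $\mathcal{Z}$ generates a linear deformation of $T$ is the same as saying that $T_t:=T+t\mathcal{Z}$ is a relative Rota-Baxter operator of $(A,\ast)$ with respect to $(V,\rho)$ for every $t\in\mathbb{K}$.

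First I would apply Proposition \ref{HJJpJJ} to each operator $T_t$. Since $T_t$ is a relative Rota-Baxter operator for every $t$, Proposition \ref{HJJpJJ} endows $V$ with a Jacobi-Jordan algebra structure $(V,\ast_{T_t})$ for each $t$, whose product is $u\ast_{T_t}v:=\rho(T_t(u))v+\rho(T_t(v))u$. The key step is then to expand this product in the parameter $t$. Because $\rho:A\rightarrow gl(V)$ is linear and $T_t(u)=T(u)+t\mathcal{Z}(u)$, one has $\rho(T_t(u))=\rho(T(u))+t\rho(\mathcal{Z}(u))$, so that
\begin{eqnarray}
u\ast_{T_t}v&=&\rho(T(u))v+\rho(T(v))u+t\Big(\rho(\mathcal{Z}(u))v+\rho(\mathcal{Z}(v))u\Big)\nonumber\\
&=&u\ast_T v+t\,\psi_{\mathcal{Z}}(u,v).\nonumber
\end{eqnarray}
Hence the product $\ast_{T_t}$ coincides exactly with the bilinear operation $\mu_t(u,v):=u\ast_T v+t\,\psi_{\mathcal{Z}}(u,v)$.

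To conclude, since $(V,\ast_{T_t})=(V,\mu_t)$ is a Jacobi-Jordan algebra for every $t\in\mathbb{K}$ by the previous step, the map $\psi_{\mathcal{Z}}$ generates a linear deformation of the associated Jacobi-Jordan algebra $(V,\ast_T)$, which is precisely the assertion. I do not expect a genuine obstacle here: the only point requiring care is the linearity of $\rho$ used in the expansion above, which makes the linear-in-$t$ term equal to $\psi_{\mathcal{Z}}$ on the nose, and thereby reduces the statement to a single application of Proposition \ref{HJJpJJ}.

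For completeness I would note the more pedestrian alternative, namely checking (\ref{defor4b})--(\ref{defor6}) by hand. Commutativity (\ref{defor4b}) and the Jacobi identity (\ref{defor5}) follow because $\mathcal{Z}$ is itself a relative Rota-Baxter operator by (\ref{defor2}), so that $(V,\psi_{\mathcal{Z}})$ is a Jacobi-Jordan algebra by Proposition \ref{HJJpJJ}, while the cocycle condition (\ref{defor6}) encodes the derivation/compatibility identity (\ref{defor3}). This route works but is considerably more laborious than the one above, so I would present the deformation-transport argument as the actual proof.
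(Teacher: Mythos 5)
Your proposal is correct and is essentially the paper's own proof: the paper likewise notes that each $T_t=T+t\mathcal{Z}$ is a relative Rota-Baxter operator, so that $(V,\ast_{T_t})$ is a Jacobi-Jordan algebra, and then expands $u\ast_{T_t}v=\rho(T_tu)v+\rho(T_tv)u=u\ast_T v+t\psi_{\mathcal{Z}}(u,v)$ by linearity of $\rho$ to conclude. The only difference is presentational: you make the implicit appeal to Proposition \ref{HJJpJJ} explicit, which the paper leaves tacit.
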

	\begin{proof}
		Let denote by $\ast_{T_t}$ the corresponding Jacobi-Jordan algebra structure associated to the relative
		Rota-Baxter operator $T_t:=T + t\mathcal{Z}.$ Then we obtain
		\begin{eqnarray*}
			u\ast_{T_t} v&=&\rho(T_tu)v+\rho(T_tv)u\\
			&=&
			\rho(Tu)v+t\rho(\mathcal{Z}u)v+\rho(Tv)u+t\rho(\mathcal{Z}v)u\\
			&=&u\ast_T v+t\psi_{\mathcal{Z}}(u,v) 
			\mbox{\,\ $\forall u,v\in V$.}
		\end{eqnarray*}
		Hence, $\psi_{\mathcal{Z}}$ generates a linear deformation of $(V,\ast_{T}).$
	\end{proof}
	Finally, we obtain the following characterization of relative Rota-Baxter operators that can be esealy checked.
	\begin{proposition}
		Let $\mathcal{A}:=(A, \ast)$ be a Jacobi-Jordan algebra and $\mathcal{V}:=(V,\rho)$ be a representation of $(A, \ast).$
		A linear map $T: V\rightarrow A$ is an
		relative Rota-Baxter operator with respect  to
		$\mathcal{V}$ if and only if $N_T:=\left(
		\begin{array}{cc}
			0& T\\
			0& 0
		\end{array}
		\right)
		: A\oplus V \rightarrow A\oplus V$ is a Nijenhuis operator on
		the Jacobi-Jordan algebra $A\oplus V.$
	\end{proposition}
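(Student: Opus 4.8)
The plan is to unwind the Nijenhuis condition (\ref{njj}) for $N_T$ on the semi-direct product $A\ltimes V=(A\oplus V,\diamond)$, which is a Jacobi-Jordan algebra by Proposition \ref{spHJJ}, and to check that it collapses exactly to the relative Rota-Baxter identity (\ref{rbHJJ2}). The structural facts that make everything work are that $N_T$ reads off only the $V$-component and returns it inside the $A$-factor, namely $N_T(x+u)=T(u)$ for all $x\in A$, $u\in V$, and that $N_T\circ N_T=0$. First I would record these two observations, since they are what force most terms to drop.

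Next, fixing $X=x+u$ and $Y=y+v$ in $A\oplus V$, I would compute the left-hand side of (\ref{njj}). Since $N_T(X)=T(u)$ and $N_T(Y)=T(v)$ both lie in the $A$-factor (their $V$-components vanish), formula (\ref{SdpHJJ1}) gives $N_T(X)\diamond N_T(Y)=T(u)\ast T(v)$.

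The main computation is the right-hand side. Writing the deformed product as $X\diamond_{N_T}Y:=N_T(X)\diamond Y+X\diamond N_T(Y)-N_T(X\diamond Y)$ in analogy with (\ref{opn}), I would expand each of the three summands with (\ref{SdpHJJ1}): the first gives $T(u)\ast y+\rho(T(u))v$, the second gives $x\ast T(v)+\rho(T(v))u$, and the third gives $T(\rho(x)v+\rho(y)u)$, which sits entirely in the $A$-factor. Collecting components, the $V$-part of $X\diamond_{N_T}Y$ equals $\rho(T(u))v+\rho(T(v))u$. Because $N_T$ depends only on the $V$-part, applying it yields $N_T(X\diamond_{N_T}Y)=T(\rho(T(u))v+\rho(T(v))u)$, the $A$-parts being annihilated.

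Finally, equating the two sides, the Nijenhuis condition for $N_T$ reduces to $T(u)\ast T(v)=T(\rho(T(u))v+\rho(T(v))u)$ for all $u,v\in V$, which is precisely (\ref{rbHJJ2}). Since $x,y$ play no role and $u,v$ are arbitrary, this is an equivalence, proving both directions simultaneously. I do not expect any genuine obstacle: the only care required is the bookkeeping of $A$- versus $V$-components, and the relation $N_T\circ N_T=0$ guarantees there are no higher-order cross terms to track.
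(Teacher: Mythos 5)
Your proposal is correct and follows essentially the same route as the paper's own proof: both reduce the Nijenhuis identity for $N_T$ on the semi-direct product $(A\oplus V,\diamond)$ of Proposition \ref{spHJJ} to the relative Rota-Baxter identity (\ref{rbHJJ2}) by observing that $N_T$ reads only the $V$-component, so that applying $N_T$ to the deformed product kills all $A$-components and leaves exactly $T\big(\rho(T(u))v+\rho(T(v))u\big)$. Your write-up is in fact slightly cleaner, since the paper's final line contains a typo ($\mu(Tv)u$ in place of $\rho(Tv)u$) that your component-by-component bookkeeping avoids.
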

\begin{proof}
Let $ (x+u,y+v)\in (A\oplus V)^{2}$. Then, 
 we have
$N_{T}(x+u)=T(u)  $, $N_{T}(y+v)=T(v)  $ and
$ \begin{array}{lll}
N_{T}(X)\diamond N_{T}(Y)-N_{T}\bigg(N_{T}(X)\diamond Y+X \diamond N_{T}(Y)-N_{T}(X\diamond Y)\bigg)=0\iff
Tu\ast Tv\\
-N_{T}\bigg(Tu\ast y+\rho(Tu)v+x\ast Tv+\rho(Tv)u-T\bigg(\rho(x)v+\rho(y)u\bigg)\bigg)
=0\\
\iff Tu\ast Tv-T\bigg(\rho(Tu)v+\mu(Tv)u\bigg)=0. 
\end{array} $\\ Hence, the conclusion follows.
%Thus, A linear map $T: V\rightarrow A$ is an
%relative Rota-Baxter operator with respect  to
%$\mathcal{V}$ if and only if $N_T:=\left(
%\begin{array}{cc}
%0& T\\
%0& 0
%\end{array}
%\right)
%: A\oplus V \rightarrow A\oplus V$ is a Nijenhuis operator on
%the Jacobi-Jordan algebra $A\oplus V.$
\end{proof}
	\section{Cohomologies and linear deformations of a relative Rota-Baxter operators on left pre-Jacobi-Jordan algebras}
	In this section, we consider the notion of left (resp. right) pre-Jacobi-Jordan algebras first introduced in \cite{absb} as left-skew-symmetric (resp. right-skew-symmetric ) algebras. We then study  the relationships with Jacobi-Jordan algebras in terms of relative Rota-Baxter operators of Jacobi-Jordan algebras. 	Next, the cohomology theory of relative Rota-Baxter operators on pre-Jacobi-Jordan algebras is introduced. We use the cohomological approach to study linear and formal deformations of relative Rota-Baxter operators. In particular, the notion of Nijenhuis elements is introduced to characterize trivial linear deformations. 
	
	\subsection{Cohomologies of relative Rota-Baxter operators on pre-Jacobi-Jordan algebras}
	This subsection is devoted to the study of representations and 
	relative Rota-Baxter operators of pre-Jacobi-Jordan algebras.
\begin{definition}\label{D8}
	Let $ (V, \rho,\mu) $ be a representation of a left pre-Jacobi-Jordan algebra $ (A,\cdot) $. A linear map $ T : V\to A $ is called a relative Rota-Baxter operator  of $ A $ associated to $ (V; \rho,\mu) $ if it satisfies
	\begin{eqnarray}
		\label{14}
		T(u)\cdot T(v)=T\biggl(\rho(Tu)v+\mu(Tv)u \biggr),  \text{ for all } u,v\in V.
	\end{eqnarray}
	Note that Rota-Baxter operators on left pre-Jacobi-Jordan algebra of weight $ 0 $ are  relative Rota-Baxter operators  with respect to the regular representation.
\end{definition}	
\begin{example}
	We suppose that $ \mathbb{K}=\mathbb{R} $ . Consider the 2-dimensional left pre-Jacobi-Jordan algebra $ (A,\cdot) $ given with respect to a basis $ \{e_{1},e_{2}\} $ by $$ e_{1}\cdot e_{1}=e_{2}.$$
	Then $T=\begin{pmatrix} x&a\\ y&b \end{pmatrix}  $ is a relative Rota-Baxter operator on $ (A,\cdot) $  with respect to the regular representation if and only if  
	\begin{eqnarray}
		\label{55}
		T(e_{i})\cdot T(e_{j})=T\left(T(e_{i})\cdot e_{j}+e_{i}\cdot T(e_{j}) \right),\forall i,j=1,2.
	\end{eqnarray}
	From $ (\ref{55}) $, we obtain the following system $(S): 
		x^{2}-2xb=
		xa=
		a^{2}=
		xa-ab=0.$ \\
		
Hence, 	$ (S) \iff 
		(a=
		x=0,\ 
		b\in \mathbb{R},\ 
		y\in \mathbb{R})
	\text{ or }
		( a=0,\ 
		x=2b,\ 
		y\in \mathbb{R}).
	$

	Then, for all $ y,b\in\mathbb{R} $, $T_{y,b}=\begin{pmatrix} 0&0\\ y&b \end{pmatrix}  $ and $T'_{y,b}=\begin{pmatrix} 2b&0\\ y&b \end{pmatrix}  $ are relative Rota-Baxter operators on $ (A,\cdot) $  with respect to the regular representation.  
\end{example}
\begin{lemma}
Let $(A, \cdot)$ be a left pre-Jacobi-Jordan algebra and $(V,\rho,\mu)$ be a representation of $(A, \cdot).$ Then, $A\oplus V$ is a representation of $(A, \cdot)$ under the maps $\rho_{A\oplus V},\mu_{A\oplus V}: A\rightarrow gl(A\oplus V)$ defined by
\begin{eqnarray}
	&&\rho_{A\oplus V}(a)(b+v):=a\cdot b+\rho(a)v.\label{sum1}
\end{eqnarray}
\begin{eqnarray}
	&&\mu_{A\oplus V}(a)(b+v):=b\cdot a+\mu(a)v.\label{sum2}
\end{eqnarray}
\end{lemma}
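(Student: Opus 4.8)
The plan is to check directly the two defining identities of a representation of a left pre-Jacobi-Jordan algebra, namely (\ref{7}) and (\ref{8}) of Definition \ref{rpHJJcl}, for the pair $(\rho_{A\oplus V},\mu_{A\oplus V})$. The key observation is that, upon evaluating on a generic element $b+v\in A\oplus V$ via (\ref{sum1})--(\ref{sum2}), every term splits into its $A$-component (a product in $A$) and its $V$-component (an expression in $\rho$ and $\mu$). The $V$-component will in each case reduce to the corresponding identity for the representation $(V,\rho,\mu)$, while the $A$-component will be handled by the left pre-Jacobi-Jordan identity (\ref{HpJJi2}).

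First I would verify the analog of (\ref{7}), that is, $\rho_{A\oplus V}(x\ast y)=-\rho_{A\oplus V}(x)\rho_{A\oplus V}(y)-\rho_{A\oplus V}(y)\rho_{A\oplus V}(x)$, where $x\ast y=x\cdot y+y\cdot x$. Applying both sides to $b+v$ and using (\ref{sum1}), the left-hand side yields $(x\ast y)\cdot b+\rho(x\ast y)v$. For the $V$-component, $\rho(x\ast y)v=-\rho(x)\rho(y)v-\rho(y)\rho(x)v$ is exactly (\ref{7}); for the $A$-component, (\ref{HpJJi2}) gives $(x\ast y)\cdot b=-x\cdot(y\cdot b)-y\cdot(x\cdot b)$, which matches $-\rho_{A\oplus V}(x)\rho_{A\oplus V}(y)(b+v)-\rho_{A\oplus V}(y)\rho_{A\oplus V}(x)(b+v)$ once the composites are expanded through (\ref{sum1}).

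The main point is the analog of (\ref{8}), namely $\mu_{A\oplus V}(y)\mu_{A\oplus V}(x)+\mu_{A\oplus V}(x\cdot y)=-\mu_{A\oplus V}(y)\rho_{A\oplus V}(x)-\rho_{A\oplus V}(x)\mu_{A\oplus V}(y)$. Evaluating on $b+v$ with (\ref{sum1})--(\ref{sum2}), the $V$-component is precisely (\ref{8}) for $(V,\rho,\mu)$ and therefore cancels automatically. For the $A$-component, bringing all terms to one side produces $(b\cdot x)\cdot y+(x\cdot b)\cdot y+b\cdot(x\cdot y)+x\cdot(b\cdot y)$; collecting the first two terms into $(b\ast x)\cdot y$ reduces the required vanishing to the identity $(b\ast x)\cdot y=-b\cdot(x\cdot y)-x\cdot(b\cdot y)$, which is again (\ref{HpJJi2}). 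This last step is where the left skew-symmetry of the anti-associator is genuinely used, so it is the only cancellation that is not purely formal; I expect it to be the main obstacle. Once it is in place, both identities of Definition \ref{rpHJJcl} hold, and hence $(A\oplus V,\rho_{A\oplus V},\mu_{A\oplus V})$ is a representation of $(A,\cdot)$.
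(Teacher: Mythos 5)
Your proposal is correct and follows essentially the same route as the paper: evaluate both representation identities (\ref{7}) and (\ref{8}) on a generic element of $A\oplus V$, observe that the $V$-component reduces to the corresponding identity for $(V,\rho,\mu)$, and handle the $A$-component with the left skew-symmetry (\ref{HpJJi})/(\ref{HpJJi2}); your regrouping $(b\cdot x)\cdot y+(x\cdot b)\cdot y=(b\ast x)\cdot y$ is just a slightly different arrangement of the same cancellation the paper performs. No gap.
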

\begin{proof}
Let $ x,y,z\in A $ and $ v\in V $. We have: \\
$ \begin{array}{rcl}
	\rho_{A\oplus V}(x\ast y)(z+v)&&\overset{(\ref{sum1})}{=}(x\ast y)\cdot z+\rho(x\ast y)v\\
	&&=-x\cdot (y\cdot z)-y\cdot (x\cdot z)-\rho(x)\circ\rho (y)v-\rho(y)\circ\rho (x)v\\
	&&=-x\cdot (y\cdot z)-\rho(x)\circ\rho (y)v-y\cdot (x\cdot z)-\rho(y)\circ\rho (x)v\\
	\rho_{A\oplus V}(x\ast y)(z+v)&&=-\rho_{A\oplus V}(x)\circ \rho_{A\oplus V}(y)(z+v)-\rho_{A\oplus V}(y)\circ \rho_{A\oplus V}(x)(z+v)
\end{array} $.\\\\\\
and also,
 \begin{eqnarray}
 	&&	\mu_{A\oplus V}(x\cdot y)(z+v)\overset{(\ref{sum2})}{=}z\cdot(x\cdot y)+\mu(x\cdot y)v
=-(z\cdot x)\cdot y-(x\cdot z)\cdot y-x\cdot (z\cdot y)\nonumber\\
&&-\mu(y)\circ \mu(x)v-\mu(y)\circ \rho(x)v
	-\rho(x)\circ \mu(y)v
	\text{ (by (\ref{HpJJi}) and (\ref{8}) )}\nonumber\\
&&	=\underbrace{-(z\cdot x)\cdot y-\mu(y)\circ \rho(x)v}\underbrace{-x\cdot (z\cdot y)-\rho(x)\circ \mu(y)v}\underbrace{-(x\cdot z)\cdot y-\mu(y)\circ \rho(x)v}\nonumber\\
&&	=-\mu_{A\oplus V}(y)\circ \mu_{A\oplus V}(x)(z+v)-\rho_{A\oplus V}(x)\circ \mu_{A\oplus V}(y)(z+v)-\mu_{A\oplus V}(y)\circ \rho_{A\oplus V}(x)(z+v).\nonumber
\end{eqnarray}
Hence, the conclusions follows. 
\end{proof}
Using the previous lemma, we give
\begin{example}
	Let $(A, \cdot)$ be a left pre-Jacobi-Jordan algebra and $(V,\rho,\mu)$ be a representation of $(A, \cdot).$
	Consider the representation $A\oplus V$  of $(A, \cdot)$ given in the previous lemma and
	define the linear map $T: A\oplus V\rightarrow A, a+v\mapsto a.$ Then $T$ is up to a scalar coefficient, a relative Rota-Baxter operator of $A$ associated to the representation $(A\oplus V,\rho_{A\oplus V}, \mu_{A\oplus V}).$
\end{example}

A relative Rota-Baxter operator of a pre-Jacobi-Jordan algebra turns into a relative Rota-Baxter operator of its associated Jacobi-Jordan algebra as we can see in the following result.
\begin{proposition}
	If $T$ is a relative Rota-Baxter operator of a left pre-Jacobi-Jordan $(A,\cdot)$ with respect to a representation $(V, \rho,\mu)$
	then $T$ is a relative Rota-Baxter operator of its associated Jacobi-Jordan $(A,\ast)$ with respect  to the representation $(V, \rho+\mu).$
\end{proposition}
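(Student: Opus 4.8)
The plan is to verify the defining identity $(\ref{rbHJJ2})$ for $T$ regarded as a map between the associated Jacobi-Jordan algebra $(A,\ast)$ and the representation $(V,\rho+\mu)$. First I would observe that this statement is well-posed: by Proposition \ref{sumrl}, the sum $(V,\rho+\mu)$ is indeed a representation of the sub-adjacent Jacobi-Jordan algebra $(A,\ast)$, so it makes sense to ask whether $T$ is a relative Rota-Baxter operator with respect to it.

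The core of the argument is a direct expansion. Using the definition $(\ref{asHJJ})$ of the associated product, namely $x\ast y=x\cdot y+y\cdot x$, I would write
\[
T(u)\ast T(v)=T(u)\cdot T(v)+T(v)\cdot T(u).
\]
Then I would apply the pre-Jacobi-Jordan relative Rota-Baxter identity $(\ref{14})$ to \emph{each} of the two summands separately; the second one is obtained from the first by swapping the roles of $u$ and $v$. This yields
\[
T(u)\cdot T(v)=T\bigl(\rho(Tu)v+\mu(Tv)u\bigr),\qquad
T(v)\cdot T(u)=T\bigl(\rho(Tv)u+\mu(Tu)v\bigr).
\]

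Finally, invoking the linearity of $T$ to combine the two images into a single one, I would regroup the four terms according to whether they are acted on by $Tu$ or by $Tv$, recognizing $\rho(Tu)v+\mu(Tu)v=(\rho+\mu)(Tu)v$ and likewise for $Tv$. This produces exactly
\[
T(u)\ast T(v)=T\bigl((\rho+\mu)(Tu)v+(\rho+\mu)(Tv)u\bigr),
\]
which is identity $(\ref{rbHJJ2})$ for the representation $(\rho+\mu)$. The computation is essentially routine; the only point requiring care is the correct bookkeeping when applying $(\ref{14})$ to the $u$-$v$ symmetrized product, so that the $\mu$-terms land on the right arguments and assemble cleanly into $\rho+\mu$.
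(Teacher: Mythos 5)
Your proposal is correct and follows essentially the same route as the paper's own proof: expand $T(u)\ast T(v)$ via the sub-adjacent product, apply the pre-Jacobi-Jordan relative Rota-Baxter identity to each of the two summands (with $u$ and $v$ swapped in the second), and regroup by linearity of $T$ into $(\rho+\mu)(Tu)v+(\rho+\mu)(Tv)u$. Your additional remark on well-posedness via Proposition \ref{sumrl} is a sound (if implicit in the paper) touch, but the argument itself is identical.
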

\begin{proof}
	Suppose that $T$ is a relative Rota-Baxter operator of a left pre-Jacobi-Jordan $(A,\cdot)$ with respect to a representation $(V, \rho,\mu)$. \\
	Let $ u $ and $ v $ be two elements of $ V $. Since  $T$ is a relative Rota-Baxter  operator of a left pre-Jacobi-Jordan $(A,\cdot)$ with respect to a representation $(V, \rho,\mu)$ we have $ Tu, Tv\in A $ and  \\
	$ \begin{array}{lcr}
	Tu\ast Tv= Tu\cdot Tv+Tv\cdot Tu
	=T\biggl(\rho(Tu)v+\mu(Tv)u \biggr)+T\biggl(\rho(Tv)u+\mu(Tu)v \biggr) \\
	=T\biggl((\rho+\mu)(Tu)v+(\rho+\mu)(Tv)u \biggr).
	\end{array}\\
	 $
	 Hence, the conclusion follows.
%Thus, $T$ is a relative Rota-Baxter operator of its associated Jacobi-Jordan $(A,\ast)$ associated to the representation $(V, \rho+\mu).$ 	  
\end{proof}

\begin{proposition}
	A linear map $T : V\rightarrow A$ is a relative Rota-Baxter operator of a left pre-Jacobi-Jordan  algebra $(A, \cdot)$ with respect to a representation $(V,\rho,\mu)$  if and only if the graph of $T,$
	$$G_r(T):=\{(T(v), v), v\in  V\}$$
	is a subalgebra of the semi-direct product algebra $A\ltimes V.$
\end{proposition}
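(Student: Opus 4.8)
The plan is to follow the same argument used earlier for the Jacobi-Jordan case, since the semi-direct product $A\ltimes V=(A\oplus V,\circledast)$ appearing in the statement is precisely the left pre-Jacobi-Jordan algebra defined in (\ref{spjj}). The first thing I would note is that $G_r(T)=\{(T(v),v):v\in V\}$ is automatically a linear subspace of $A\oplus V$, because $T$ is linear; consequently, asserting that $G_r(T)$ is a \emph{subalgebra} of $A\ltimes V$ reduces to checking that it is closed under the product $\circledast$.

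Next I would evaluate $\circledast$ on two generic elements of the graph. Taking $x=T(u)$ and $y=T(v)$ in (\ref{spjj}), for arbitrary $u,v\in V$ I get
$$(T(u),u)\circledast(T(v),v)=\big(T(u)\cdot T(v),\ \rho(T(u))v+\mu(T(v))u\big).$$
The crucial observation is that the second component $\rho(T(u))v+\mu(T(v))u$ is exactly the vector to which $T$ is applied on the right-hand side of the defining identity (\ref{14}). Hence this product belongs to $G_r(T)$—that is, its $A$-component is the image under $T$ of its $V$-component—if and only if
$$T(u)\cdot T(v)=T\big(\rho(T(u))v+\mu(T(v))u\big).$$
Since $u,v$ range over all of $V$, the graph is closed under $\circledast$ precisely when this equation holds for all $u,v$, which is exactly condition (\ref{14}) of Definition \ref{D8} defining a relative Rota-Baxter operator.

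There is essentially no serious obstacle here: the whole statement collapses to the single evaluation of the semi-direct product on the graph, followed by matching its two components against the defining identity. The only points demanding care are to record at the outset that $G_r(T)$ is already a subspace (so that ``subalgebra'' means nothing more than closure under multiplication), and to keep track of which of $\rho$ and $\mu$ acts on which argument, so that the second component lines up correctly with the argument of $T$ in (\ref{14}).
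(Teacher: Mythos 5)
Your proof is correct and follows essentially the same route as the paper: compute $(T(u),u)\circledast(T(v),v)$ via (\ref{spjj}) and observe that membership of this product in $G_r(T)$ is exactly the identity (\ref{14}). The only addition is your explicit remark that $G_r(T)$ is automatically a subspace by linearity of $T$, which the paper leaves implicit but which is a worthwhile clarification.
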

\begin{proof}
	Let $u,v\in V.$ Then  We compute:
	\begin{eqnarray}
		(Tu,u)\circledast (Tv,v)=(Tu\cdot Tv, \rho(Tu)v+\mu(Tv)u).\nonumber
	\end{eqnarray}
	Hence 
	\begin{eqnarray}
		(Tu,u)\circledast (Tv,v)\in Gr(T)\Leftrightarrow Tu\cdot Tv=T(\rho(Tu)v+\mu(Tv)u).\nonumber
	\end{eqnarray}
\end{proof}
The following result shows that a relative Rota-Baxter operator can be lifted up the
Rota-Baxter operator.
\begin{proposition}
	Let $(A, \cdot)$ be a left pre-Jacobi-Jordan algebra, $(V,\rho,\mu)$ be a representation of $A$ and
	$T : V\rightarrow A$ be a linear map. Define 
	$\widehat{T}\in End(A\oplus V)$ by 
	$\widehat{T}(a+v):=T(v).$ Then T is a relative Rota-Baxter operator with respect to $(V,\rho,\mu)$
	if and only if $\widehat{T}$ is a Rota-Baxter operator on $A\oplus V.$
\end{proposition}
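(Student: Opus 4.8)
The plan is to unwind both sides of the weight-zero Rota-Baxter identity (\ref{Rota-Baxt}) for $\widehat T$ on the semi-direct product $A\ltimes V=(A\oplus V,\circledast)$, where $\circledast$ is the product (\ref{spjj}), and to show that this identity collapses exactly to the relative Rota-Baxter condition (\ref{14}). This mirrors the proof of the analogous proposition given above in the Jacobi-Jordan setting. The key structural observation, used repeatedly, is that $\widehat T$ reads off only the $V$-component of its argument and always lands in $A$: for any $x\in A$ and $w\in V$ one has $\widehat T(x+w)=T(w)\in A$, so the $A$-part of an argument is invisible to $\widehat T$.

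First I would fix $a,b\in A$ and $u,v\in V$ and compute the left-hand side. Since $\widehat T(a+u)=T(u)$ and $\widehat T(b+v)=T(v)$ both lie in $A$ (with zero $V$-component), the product (\ref{spjj}) gives
\begin{eqnarray}
\widehat T(a+u)\circledast\widehat T(b+v)=T(u)\cdot T(v).\nonumber
\end{eqnarray}

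Next I would compute the two mixed products on the right-hand side. Using (\ref{spjj}), one finds $\widehat T(a+u)\circledast(b+v)=T(u)\cdot b+\rho(T(u))v$ and $(a+u)\circledast\widehat T(b+v)=a\cdot T(v)+\mu(T(v))u$. Summing, the $A$-component is $T(u)\cdot b+a\cdot T(v)$ and the $V$-component is $\rho(T(u))v+\mu(T(v))u$. Applying $\widehat T$, the $A$-component is discarded and only the $V$-component survives, so the right-hand side of (\ref{Rota-Baxt}) with $\lambda=0$ equals $T\bigl(\rho(T(u))v+\mu(T(v))u\bigr)$.

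Comparing the two sides, the Rota-Baxter identity for $\widehat T$ reads $T(u)\cdot T(v)=T\bigl(\rho(Tu)v+\mu(Tv)u\bigr)$, which is precisely (\ref{14}). Since $a$ and $b$ drop out of the computation entirely, the identity for $\widehat T$ holds for all elements of $A\oplus V$ if and only if (\ref{14}) holds for all $u,v\in V$, yielding both implications at once. I do not expect a genuine obstacle here; the only point requiring care is the bookkeeping of the $A$- and $V$-components under $\circledast$, together with the fact that $\widehat T$ annihilates the $A$-part, which is exactly what isolates the weight-zero term as the relevant one.
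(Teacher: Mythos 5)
Your proposal is correct and follows essentially the same route as the paper's own proof: both compute $\widehat{T}(a+u)\circledast\widehat{T}(b+v)=T(u)\cdot T(v)$, then evaluate $\widehat{T}\bigl(\widehat{T}(a+u)\circledast(b+v)+(a+u)\circledast\widehat{T}(b+v)\bigr)=T\bigl(\rho(Tu)v+\mu(Tv)u\bigr)$ using the semi-direct product (\ref{spjj}), and observe that equality of the two expressions is exactly condition (\ref{14}). Your added remark that $a$ and $b$ drop out (so the equivalence holds for all of $A\oplus V$ iff (\ref{14}) holds for all $u,v\in V$) is implicit in the paper's argument and correctly stated.
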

\begin{proof}
	Let $a,b\in A$ and $u,v\in V.$ Then, we have
	\begin{eqnarray}
		\widehat{T}(a+u)\circledast\widehat{T}(b+v)=Tu\circledast Tv=Tu\cdot Tv\nonumber 
	\end{eqnarray}
	and
	\begin{eqnarray}
		&&\widehat{T}(\widehat{T}(a+u)\circledast(b+v)+(a+u)\circledast\widehat{T}(b+v))=\widehat{T}(Tu\cdot b+\rho(Tu)v+a\cdot Tv+\mu(Tv)u)\nonumber\\
		&&=T(\rho(Tu)v+\mu(Tv)u)\nonumber.
	\end{eqnarray}
	Hence,
	\begin{eqnarray}
		&&\widehat{T}(a+u)\circledast\widehat{T}(b+v)=\widehat{T}(\widehat{T}(a+u)\circledast(b+v)+(a+u)\circledast\widehat{T}(b+v))\nonumber
	\end{eqnarray}
	if and only if,
	\begin{eqnarray}
		&&Tu\cdot Tv=T(\rho(Tu)v+\mu(Tv)u).
		\nonumber
	\end{eqnarray}	
\end{proof}
\begin{proposition}\label{HJJpJJ}
	Let $(A,\ast)$ be a Jacobi-Jordan algebra and $(V, \rho)$ be a representation. If $T$ is
	a relative Rota-Baxter operator with respect to $\rho$, then $(V, \cdot)$ is a left pre-Jacobi-Jordan algebra, where
	\begin{eqnarray}
		u\cdot v:=\rho(T(u))v \mbox{ for $u,v\in V$.} \label{revasHJJ1}
	\end{eqnarray}
	Therefore there exists an associated Jacobi-Jordan algebra structure on $V$ given by Eq. (\ref{asHJJ}) and $T$
	is a homomorphism of Jacobi-Jordan algebras. Moreover, $T(V):=\{T(v)|v\in V\} \subset A$ is a Jacobi-Jordan
	subalgebra of $(A, \ast)$ and there is an induced left pre-Jacobi-Jordan algebra structure on $T(V)$ given
	by
	\begin{eqnarray}
		T(u)\bullet T(v):=T(u\ast v) \mbox{ for $u,v\in V$.} \label{revasHJJ2}
	\end{eqnarray}
	The corresponding associated Jacobi-Jordan algebra structure on $T(V)$ given by Eq. (\ref{asHJJ}) is just a
	Jacobi-Jordan subalgebra of $(A, \ast)$ and $T$ is a homomorphism of left pre-Jacobi-Jordan algebras.
\end{proposition}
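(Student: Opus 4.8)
The plan is to split the statement into its four assertions and dispatch them in order, each one reducing to the relative Rota-Baxter identity (\ref{rbHJJ2}) together with the representation identity (\ref{rHJJ2}). First I would verify that $\cdot$, with $u\cdot v:=\rho(T(u))v$, satisfies the left-skew-symmetry (\ref{HpJJi2}); by Proposition \ref{lpHJJHJJ} nothing more is needed to make $(V,\cdot)$ a left pre-Jacobi-Jordan algebra. Setting $u\ast v=u\cdot v+v\cdot u=\rho(Tu)v+\rho(Tv)u$, the decisive point is that (\ref{rbHJJ2}) yields $T(u\ast v)=T(u)\ast T(v)$, so that $(u\ast v)\cdot w=\rho\big(T(u)\ast T(v)\big)w$; feeding this into (\ref{rHJJ2}) gives $-\rho(Tu)\rho(Tv)w-\rho(Tv)\rho(Tu)w=-u\cdot(v\cdot w)-v\cdot(u\cdot w)$, which is exactly (\ref{HpJJi2}). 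This is the same computation already carried out for the Jacobi-Jordan analogue, so it can be reused verbatim. Proposition \ref{lpHJJHJJ} then supplies the associated Jacobi-Jordan product $u\ast v=u\cdot v+v\cdot u$ on $V$, and since the equality $T(u\ast v)=T(u)\ast T(v)$ is literally (\ref{rbHJJ2}), the operator $T$ is a morphism of Jacobi-Jordan algebras.

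That $T(V)$ is a Jacobi-Jordan subalgebra of $(A,\ast)$ is immediate: for $T(u),T(v)\in T(V)$, identity (\ref{rbHJJ2}) gives $T(u)\ast T(v)=T\big(\rho(Tu)v+\rho(Tv)u\big)\in T(V)$, and commutativity together with the Jacobi identity is inherited from $(A,\ast)$. The genuinely delicate part, and the main obstacle, is to make sense of the induced left pre-Jacobi-Jordan product $T(u)\bullet T(v):=T(u\cdot v)=T(\rho(Tu)v)$ when $T$ fails to be injective. The lemma I would isolate for this is that $\ker T$ is stable under every $\rho(T(u))$: if $T(w)=0$, then applying (\ref{rbHJJ2}) to the pair $(u,w)$ gives $0=T(u)\ast T(w)=T\big(\rho(Tu)w+\rho(Tw)u\big)=T(\rho(Tu)w)$, since $\rho(Tw)=\rho(0)=0$, so that $\rho(Tu)w\in\ker T$. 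Consequently $T(\rho(Tu)v)$ depends only on $T(u)$ and $T(v)$, which is exactly what well-definedness of $\bullet$ requires.

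With $\bullet$ well defined, $T\colon(V,\cdot)\to(T(V),\bullet)$ becomes a surjective algebra homomorphism, so the left-skew-symmetry identity (\ref{HpJJi}) descends to the homomorphic image; this makes $(T(V),\bullet)$ a left pre-Jacobi-Jordan algebra and shows $T$ is a morphism of such algebras. Finally, the associated Jacobi-Jordan product of $(T(V),\bullet)$ is $T(u)\bullet T(v)+T(v)\bullet T(u)=T\big(\rho(Tu)v+\rho(Tv)u\big)=T(u)\ast T(v)$ by (\ref{rbHJJ2}), i.e.\ precisely the restriction of $\ast$, so it coincides with the Jacobi-Jordan subalgebra structure found above. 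Apart from the $\ker T$-invariance used for well-definedness, every step is a direct substitution of (\ref{rbHJJ2}) and (\ref{rHJJ2}).
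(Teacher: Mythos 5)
Your proof is correct, and its core coincides with the paper's: the paper's entire written proof consists of verifying the left-skew-symmetry \eqref{HpJJi2} for $u\cdot v=\rho(T(u))v$, using $T(u\star v)=T(u)\ast T(v)$ (a consequence of \eqref{rbHJJ2}) together with the representation identity \eqref{rHJJ2}, and then declaring that ``the other conclusions follow immediately.'' Where you genuinely diverge is in treating those remaining conclusions, and in particular in isolating the one point that is \emph{not} immediate: when $T$ is not injective, the induced product on $T(V)$ must be shown to be well defined. Your key lemma --- that $\ker T$ is invariant under every $\rho(T(u))$, obtained by applying \eqref{rbHJJ2} to a pair $(u,w)$ with $T(w)=0$ and using $\rho(T(w))=\rho(0)=0$ --- is exactly the missing justification, and your deduction of well-definedness from it is sound; the descent of the identity \eqref{HpJJi} to the surjective image and the identification of the associated commutative product on $T(V)$ with the restriction of $\ast$ then follow by substitution, as you say. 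Note also that you have tacitly (and correctly) repaired a slip in the statement itself: Eq.~\eqref{revasHJJ2} as printed reads $T(u)\bullet T(v):=T(u\ast v)$, which with $\ast$ the symmetrized product would make $\bullet$ commutative and hence could not give a nontrivial left pre-Jacobi-Jordan structure; the intended definition is $T(u)\bullet T(v):=T(u\cdot v)$, which is the one you use. In short, your argument buys a complete and rigorous proof of all four assertions, whereas the paper's proof covers only the first; the price is the extra kernel-invariance lemma, which the paper never states.
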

\begin{proof}
	Let $u, v, w\in V$ and put  
	$u\star v=u\cdot v + v\cdot u.$ Note
	first that $T(u\star v) = T(u)\ast T(v).$ Then, we compute 
	(\ref{HpJJi2}) as follows
	\begin{eqnarray}
		&&(u\star v)\cdot w=\rho(T(u)\ast T(v))w-\rho(T u)\rho(Tv)w-\rho(T v)\rho(tu)w\nonumber\\
		&&=-u\cdot(v\cdot w)-v\cdot(u\cdot w).\nonumber
	\end{eqnarray}
	Therefore, $(V, \cdot)$ is a left pre-Jacobi-Jordan algebra. The other conclusions follow immediately.
\end{proof}
An obvious consequence of Proposition \ref{HJJpJJ} \,\ is the following construction of a left pre-Jacobi-Jordan algebra in terms of a Rota-Baxter operator (of weight zero) of a Jacobi-Jordan algebra.
\begin{corollary}
	Let $(A,\ast)$ be a Jacobi-Jordan algebra and $P$ be a Rota-Baxter operator (of weight
	zero) on $A.$ Then there is a left pre-Jacobi-Jordan algebra structure on $A$ given by
	\begin{eqnarray}
		x\cdot y:=P(x)\ast y \mbox{ for all $x,y\in A.$}
	\end{eqnarray}
\end{corollary}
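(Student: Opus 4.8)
The plan is to obtain the corollary as a direct specialization of Proposition~\ref{HJJpJJ} to the regular representation, so that essentially no new computation is required. First I would recall that $(A,L)$, with $L(x)y:=x\ast y$, is the regular representation of $(A,\ast)$ (it satisfies (\ref{rHJJ2}), as recorded earlier among the examples of representations). The decisive observation is that a Rota-Baxter operator $P$ of weight zero on $(A,\ast)$ is \emph{exactly} a relative Rota-Baxter operator of $(A,\ast)$ with respect to $(A,L)$. Indeed, writing the relative Rota-Baxter condition (\ref{rbHJJ2}) with $V=A$, $\rho=L$ and $T=P$ gives
\[
P(x)\ast P(y)=P\bigl(L(P(x))y+L(P(y))x\bigr)=P\bigl(P(x)\ast y+P(y)\ast x\bigr),
\]
and by commutativity of $\ast$ (so that $P(y)\ast x=x\ast P(y)$) this is precisely the weight-zero instance $\lambda=0$ of the Rota-Baxter identity (\ref{Rota-Baxt}).

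Having made this identification, I would then invoke Proposition~\ref{HJJpJJ} verbatim. That proposition states that whenever $T$ is a relative Rota-Baxter operator of a Jacobi-Jordan algebra with respect to a representation $(V,\rho)$, the product $u\cdot v:=\rho(T(u))v$ endows $V$ with a left pre-Jacobi-Jordan algebra structure. Taking $V=A$, $\rho=L$ and $T=P$, the induced product specializes to
\[
x\cdot y=L(P(x))y=P(x)\ast y,
\]
which is exactly the product in the statement. Hence $(A,\cdot)$ is a left pre-Jacobi-Jordan algebra, as claimed.

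There is no real obstacle here: all the content is already carried by Proposition~\ref{HJJpJJ}, and the only thing to verify is the (routine) matching of the two defining identities, which reduces to a single application of the commutativity of $\ast$. The only point deserving a word of care is to confirm that the regular representation $(A,L)$ genuinely satisfies (\ref{rHJJ2}) so that the hypotheses of Proposition~\ref{HJJpJJ} are met; this, however, is precisely the assertion that $(A,L)$ is a representation, established earlier in the text.
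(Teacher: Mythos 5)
Your proof is correct and follows exactly the route the paper intends: the paper's proof is simply ``Straightforward,'' with the corollary presented as an immediate consequence of Proposition~\ref{HJJpJJ}, specialized to the regular representation $(A,L)$ and $T=P$, using the observation (already recorded in the paper) that weight-zero Rota-Baxter operators are relative Rota-Baxter operators with respect to the regular representation. Your explicit verification that the two defining identities match via commutativity of $\ast$ just fills in the details the paper leaves implicit.
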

\begin{proof}
	Straightforward.
\end{proof}
%\begin{example}
%\end{example}
\begin{corollary}
	Let $(A,\ast)$ be a Jacobi-Jordan algebra. Then there exists a compatible left pre-Jacobi-
	Jordan algebra structure on $A$ if and only if there exists an invertible relative Rota-Baxter operator of $(A, \ast).$
\end{corollary}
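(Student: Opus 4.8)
The plan is to prove both implications by exhibiting explicit correspondences between compatible left pre-Jacobi-Jordan products and invertible relative Rota-Baxter operators, the key bridge being Proposition \ref{HJJpJJ}. Throughout, \emph{compatible} means that a left pre-Jacobi-Jordan product $\cdot$ on $A$ satisfies $x\cdot y+y\cdot x=x\ast y$ for all $x,y\in A$, i.e.\ its associated (sub-adjacent) Jacobi-Jordan algebra defined by Eq.~(\ref{asHJJ}) is precisely $(A,\ast)$.

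For the forward implication, suppose $(A,\cdot)$ is a compatible left pre-Jacobi-Jordan structure. First I would introduce the left multiplication $L:A\rightarrow gl(A)$, $L(x)y:=x\cdot y$, and observe that the defining axiom (\ref{HpJJi2}) reads exactly $(x\ast y)\cdot z=-x\cdot(y\cdot z)-y\cdot(x\cdot z)$, which is the operator identity $L(x\ast y)=-L(x)L(y)-L(y)L(x)$. Hence $(A,L)$ is a representation of $(A,\ast)$ in the sense of Eq.~(\ref{rHJJ2}). I then claim that the identity map $T:=\mathrm{Id}_A:A\rightarrow A$ is a relative Rota-Baxter operator with respect to $(A,L)$: indeed the defining relation (\ref{rbHJJ2}) becomes $x\ast y=L(x)y+L(y)x=x\cdot y+y\cdot x$, which is exactly compatibility. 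Since $\mathrm{Id}_A$ is invertible, this produces the desired invertible relative Rota-Baxter operator.

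For the converse, suppose $T:V\rightarrow A$ is an invertible relative Rota-Baxter operator with respect to a representation $(V,\rho)$. By Proposition \ref{HJJpJJ}, the product $u\cdot v:=\rho(T(u))v$ turns $V$ into a left pre-Jacobi-Jordan algebra whose associated Jacobi-Jordan product $\ast_V$, given by $u\ast_V v:=u\cdot v+v\cdot u$, satisfies $T(u\ast_V v)=T(u)\ast T(v)$; that is, $T$ is a homomorphism of Jacobi-Jordan algebras. Using invertibility of $T$, I would transport the pre-Jacobi-Jordan structure to $A$ by setting $x\cdot_A y:=T\big((T^{-1}x)\cdot(T^{-1}y)\big)$. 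Since $T$ is a linear isomorphism, $(A,\cdot_A)$ is again a left pre-Jacobi-Jordan algebra. It remains to verify compatibility: computing the associated product gives $x\cdot_A y+y\cdot_A x=T\big((T^{-1}x)\ast_V(T^{-1}y)\big)=T(T^{-1}x)\ast T(T^{-1}y)=x\ast y$, where the middle equality uses that $T$ is a Jacobi-Jordan homomorphism. Thus $\cdot_A$ is a compatible left pre-Jacobi-Jordan structure on $A$.

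Neither direction presents a serious obstacle; the proof is essentially a matter of unwinding definitions. The one point that requires care is the converse, where compatibility of the transported product must be checked, and this hinges precisely on the homomorphism property of $T$ recorded in Proposition \ref{HJJpJJ}; without it the associated product would not return $\ast$. I would flag this as the conceptual heart of the argument, even though the computation itself is short.
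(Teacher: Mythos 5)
Your proof is correct and takes essentially the same route as the paper: the forward direction uses the identity map as an invertible relative Rota-Baxter operator with respect to the left-multiplication representation of the compatible pre-Jacobi-Jordan product, and the converse transports the pre-Jacobi-Jordan structure of Proposition \ref{HJJpJJ} through $T$, yielding exactly the paper's formula $x\cdot y=T(\rho(x)T^{-1}(y))$ and the same compatibility check. Your write-up is in fact slightly more careful than the paper's (it verifies explicitly that $(A,L)$ is a representation and that transport along the isomorphism preserves the pre-Jacobi-Jordan identity), but the underlying argument is identical.
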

\begin{proof}
	Let $(A,\cdot)$ be a left pre-Jacobi-Jordan algebra and $(A,\star)$ be the associated Jacobi-Jordan
	algebra. Then the identity map $id : A\rightarrow A$ is an invertible relative Rota-Baxter operator of $(A,\star)$ with respect to $(A, ad).$
	
	Conversely, suppose that there exists an invertible relative Rota-Baxter operator $T$ of $(A, \ast)$ with respect 
	to a representation $(V, \rho),$ then by Proposition \ref{HJJpJJ}, there is a left pre-Jacobi-Jordan algebra
	structure on $T(V)=A$ given by
	$$T(u)\cdot T(v)=T(\rho(T(u))v),\mbox{ for all $u, v \in V.$}$$
	If we set $T(u)=x$ and $T(v)=y,$ then we obtain
	$$ x\cdot y=T(\rho(x)T^{-1}(y)), \mbox{ for all $x, y \in A.$}.$$
	It is a compatible left pre-Jacobi-Jordan algebra structure on $(A,\ast).$ Indeed,
	\begin{eqnarray}
		&&x\cdot y+y\cdot x=T(\rho(x)T^{-1}(y)+\rho(y)T^{-1}(x))\nonumber\\
		&&= T(T^{-1}(x))\ast T(T^{-1}(y))=x\ast y.\nonumber
	\end{eqnarray}
\end{proof}
%\begin{example}
%\end{example}
\begin{proposition}
	Let $T : V\rightarrow A$ be a relative Rota-Baxter operator on the Jacobi-Jordan algebra $( A, \ast)$ with respect to the representation $(V,\rho).$
	Let us define a map $\rho_T : V\rightarrow  gl(A)$ given by
	\begin{eqnarray}
		\rho_T(u)x:=T(u)\ast x-T(\rho(x)u) \mbox{ for all $(u,x)\in V\times A $.}\nonumber
	\end{eqnarray}
	Then, the triplet $(A,\rho_T)$ is a representation of the sub-adjacent Jacobi-Jordan algebra $V^C=(V,\star)$ associated with the left pre-Jacobi-Jordan algebra 
	$(V,\cdot)$ defined in Proposition \ref{HJJpJJ}.
\end{proposition}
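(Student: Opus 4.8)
The plan is to recognise that this statement is the pre-Jacobi-Jordan counterpart of the analogous proposition already established in Section~3, and that the two in fact coincide. Indeed, by Proposition~\ref{HJJpJJ} the left pre-Jacobi-Jordan product on $V$ is $u\cdot v=\rho(T(u))v$, so its sub-adjacent product is $u\star v=u\cdot v+v\cdot u=\rho(T(u))v+\rho(T(v))u$, which is exactly the product $\ast_T$ used earlier. Hence $V^C=(V,\star)$ is the same Jacobi-Jordan algebra as $V_T=(V,\ast_T)$, and since $\rho_T$ is given by the same formula, it suffices to verify the representation identity \eqref{rHJJ2} for $\rho_T$ over $(V,\star)$, namely $\rho_T(u\star v)=-\rho_T(u)\rho_T(v)-\rho_T(v)\rho_T(u)$.

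First I would record the morphism relation $T(u\star v)=T(u)\ast T(v)$, which follows from the defining identity \eqref{rbHJJ2}. Using this together with $\rho_T(u)x=T(u)\ast x-T(\rho(x)u)$, a direct expansion gives
\[
\rho_T(u\star v)x=(Tu\ast Tv)\ast x-T\big(\rho(x)\rho(Tu)v\big)-T\big(\rho(x)\rho(Tv)u\big).
\]
Next I would compute $\rho_T(u)\rho_T(v)x$ by inserting $\rho_T(v)x=Tv\ast x-T(\rho(x)v)$ into the first slot; here the term $Tu\ast T(\rho(x)v)$ is rewritten with \eqref{rbHJJ2} and $\rho(Tv\ast x)$ with \eqref{rHJJ2}. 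After cancellation this collapses to
\[
\rho_T(u)\rho_T(v)x=Tu\ast(Tv\ast x)-T\big(\rho(Tu)\rho(x)v\big)+T\big(\rho(Tv)\rho(x)u\big)+T\big(\rho(x)\rho(Tv)u\big).
\]

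Swapping $u$ and $v$ yields the companion expression for $\rho_T(v)\rho_T(u)x$; adding the two, the mixed terms $T(\rho(Tu)\rho(x)v)$ and $T(\rho(Tv)\rho(x)u)$ cancel pairwise. Comparing $-\rho_T(u)\rho_T(v)x-\rho_T(v)\rho_T(u)x$ with $\rho_T(u\star v)x$, every remaining $T$-term matches, and the required identity reduces to
\[
(Tu\ast Tv)\ast x=-Tu\ast(Tv\ast x)-Tv\ast(Tu\ast x),
\]
which is just the Jacobi-Jordan identity \eqref{JJi}, together with commutativity, applied to the triple $(Tu,Tv,x)$. This proves \eqref{rHJJ2} and hence that $(A,\rho_T)$ is a representation of $V^C$. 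The only delicate point is the bookkeeping in the expansion of $\rho_T(u)\rho_T(v)x$: one must check that the term $T(\rho(T(\rho(x)v))u)$ coming from the inner composition cancels exactly against the term generated when \eqref{rbHJJ2} is applied to $Tu\ast T(\rho(x)v)$; everything else is routine.
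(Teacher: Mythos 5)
Your proposal is correct and follows essentially the same route as the paper's own proof: the same expansion of $\rho_T(u\star v)x$ using $T(u\star v)=Tu\ast Tv$, the same collapse of $\rho_T(u)\rho_T(v)x$ via (\ref{rbHJJ2}) and (\ref{rHJJ2}) to the four-term expression (including the cancellation of $T\big(\rho(T(\rho(x)v))u\big)$ that you flag), the same swap-and-add cancellation of the mixed terms, and the same final appeal to the Jacobi identity (\ref{JJi}). Your preliminary identification of $(V,\star)$ with the algebra $(V,\ast_T)$ of Section 3 is exactly what the paper exploits implicitly, since its proof here repeats the Section 3 computation verbatim.
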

\begin{proof}
	Let $u,v\in V,\ x\in A.$ Recall that $u\star v=\rho(Tu)v+\rho(Tv)u$ and $T(u\star v)=T(u)\ast T(v).$ Then, by  straightforward  computaions, we have
	\begin{eqnarray}
		&&\rho_T(u\star v)x=(T(u)\ast T(v))\ast x-T\Big(\rho(x)\rho(Tu)v\Big)-T\Big(\rho(x)\rho(Tv)u\Big).\nonumber
	\end{eqnarray}
	Also, we compute
	\begin{eqnarray}
		&&\rho_T(u)\rho_T(v)x=Tu\ast(Tv\ast x)-Tu\ast T(\rho(x)v)-
		T\Big(\rho(Tv\ast x )u\Big)\nonumber\\
		&&+T\Big( \rho(T(\rho(x)v))u\Big)=
		Tu\ast(Tv\ast x)-T\Big(\rho(Tu)\rho(x)v+\rho(T(\rho(x)v))u \Big)\nonumber\\
		&&-
		T\Big(\rho(T v)\rho(x)u+\rho(x)\rho(Tv)u \Big)+T\Big( \rho(T(\rho(x)v))u\Big)
		\mbox{ (by (\ref{rbHJJ2}) and (\ref{rHJJ2})  ) }\nonumber\\
		&&=
		T u\ast(Tv\ast x)-T\Big(\rho(T u)\rho(x)v\Big)+
		T\Big(\rho(T v)\rho(x)u\Big)+T\Big(\rho(x)\rho(Tv)u \Big).\nonumber
	\end{eqnarray}
	Switching $u$ and $v$ in the above equation, we come to
	\begin{eqnarray}
		\rho_T(v)\rho_T(u)x=
		Tv\ast(Tu\ast x)-T\Big(\rho(T v)\rho(x)u\Big)+
		T\Big(\rho(T u)\rho(x)v\Big)+T\Big(\rho(x)\rho(Tu)v \Big).\nonumber
	\end{eqnarray}
	It follows by (\ref{JJi}) that
	\begin{eqnarray}
		-\rho_T(u)\rho_T(v)x-\rho_T(v)\rho_T(u)x=\rho_T(u\star v)x,\nonumber
	\end{eqnarray}
	i.e., (\ref{rHJJ2}) holds in $(A,\rho_T).$
\end{proof}	
\begin{proposition}	
	\label{l1}
	Let $ T $ be a relative Rota-Baxter operator on a left pre-Jacobi-Jordan algebra $ (A,\cdot) $ with respect to $ (V; \rho,\mu) $. Define
	\begin{eqnarray}
		\label{15}
		u \cdot _{T} v=\rho(Tu)v+\mu(Tv)u,\,\  \forall u,v\in V.
	\end{eqnarray}
	Then $ (V,\cdot _{T} ) $ is a left pre-Jacobi-Jordan algebra.
\end{proposition}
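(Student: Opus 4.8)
The plan is to verify directly the defining axiom of a left pre-Jacobi-Jordan algebra for the product $\cdot_{T}$ introduced in (\ref{15}). By definition it suffices to check that the anti-associator of $\cdot_{T}$ is left skew-symmetric, which by (\ref{HpJJi2}) amounts to the identity
\begin{eqnarray}
(u\ast_{T}v)\cdot_{T}w=-u\cdot_{T}(v\cdot_{T}w)-v\cdot_{T}(u\cdot_{T}w),\qquad \forall u,v,w\in V, \nonumber
\end{eqnarray}
where $u\ast_{T}v:=u\cdot_{T}v+v\cdot_{T}u$ is the associated symmetric product on $V$. The key preliminary observation is that the relative Rota-Baxter condition (\ref{14}) says precisely $T(u\cdot_{T}v)=Tu\cdot Tv$; summing this with its $u\leftrightarrow v$ counterpart gives $T(u\ast_{T}v)=Tu\cdot Tv+Tv\cdot Tu=Tu\ast Tv$, where $\ast$ denotes the sub-adjacent product (\ref{asHJJ}) on $A$.

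With this in hand, I would set $X:=Tu$, $Y:=Tv$, $Z:=Tw$ and expand the three terms using only the definition (\ref{15}), so that $a\cdot_{T}b=\rho(Ta)b+\mu(Tb)a$, together with the preliminary identities. The first term becomes
\begin{eqnarray}
(u\ast_{T}v)\cdot_{T}w=\rho(X\ast Y)w+\mu(Z)(u\ast_{T}v), \nonumber
\end{eqnarray}
and I then apply the representation law (\ref{7}) to rewrite $\rho(X\ast Y)=-\rho(X)\rho(Y)-\rho(Y)\rho(X)$. The two remaining terms $u\cdot_{T}(v\cdot_{T}w)$ and $v\cdot_{T}(u\cdot_{T}w)$ expand, after using $T(v\cdot_{T}w)=Y\cdot Z$ and $T(u\cdot_{T}w)=X\cdot Z$, into a combination of $\rho(X)\rho(Y)w$ and $\rho(Y)\rho(X)w$ together with $\mu$-terms acting on $u$ and on $v$.

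The next step is to sort the resulting sum according to the vector on which the operators act. The four $\rho$-$\rho$ terms evaluated on $w$ cancel in pairs against the expansion of $\rho(X\ast Y)w$ coming from (\ref{7}). The terms acting on $u$ collect to $\mu(Z)\mu(Y)u+\mu(Y\cdot Z)u+\mu(Z)\rho(Y)u+\rho(Y)\mu(Z)u$, which vanishes by (\ref{8}) applied with $(x,y)=(Y,Z)$; symmetrically, the terms acting on $v$ collect to $\mu(Z)\mu(X)v+\mu(X\cdot Z)v+\mu(Z)\rho(X)v+\rho(X)\mu(Z)v$, which vanishes by (\ref{8}) with $(x,y)=(X,Z)$. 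Adding everything up, each group cancels, which establishes the required identity and hence shows that $(V,\cdot_{T})$ is a left pre-Jacobi-Jordan algebra.

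The computation is entirely mechanical, so the main obstacle is organizational rather than conceptual: unlike the Jacobi-Jordan situation treated earlier, here $\rho$ and $\mu$ play genuinely asymmetric roles, and one must be careful that the operator $\mu(Z)$ lands on the whole of $u\ast_{T}v$ rather than on its individual factors, and that the two applications of (\ref{8}) are invoked with the correct substitutions. The only mildly delicate point is recognizing that the $u$-grouped and $v$-grouped residual terms each reproduce exactly one instance of (\ref{8}); once the bookkeeping is aligned, the cancellation is immediate.
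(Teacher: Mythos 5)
Your proof is correct and follows essentially the same route as the paper's: both verify the left skew-symmetry of the anti-associator of $\cdot_T$ (in your case via the equivalent form (\ref{HpJJi2})), use the Rota-Baxter identity (\ref{14}) in the form $T(u\cdot_T v)=Tu\cdot Tv$, and then split the expanded expression into exactly the same three blocks — the $\rho$-terms on $w$ killed by (\ref{7}), and the two groups of $\mu$-terms on $u$ and on $v$ killed by (\ref{8}) with the substitutions $(Tv,Tw)$ and $(Tu,Tw)$ respectively. No gaps; the bookkeeping you describe matches the paper's computation term for term.
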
	
\begin{proof} Denote by $Aasso_T$ the anti-associator with respect to $\cdot_T$  and
	let $u$, $v$ and $ w $  be three elements of $ V $. \text{ We have } :\\
	\\
	$ \begin{array}{lll}
		Aasso_{T}(u,v,w)+ Aasso_{T}(v,u,w)=(u \cdot _{T} v)\cdot _{T}w+u \cdot _{T} (v\cdot _{T}w)+(v \cdot _{T} u)\cdot _{T}w+v \cdot _{T} (u\cdot _{T}w)\\
		\overset{(\ref{15})}{=}
		\rho(T(u\cdot_T v))w+\mu(Tw)(u\cdot_T v)+\rho(Tu)(v\cdot_T w)+\mu(T(v\cdot_T w))u+
		
		\rho(T(v\cdot_T u))w\\
		+\mu(Tw)(v\cdot_T u)+\rho(Tv)(u\cdot_T w)+\mu(T(u\cdot_T w))v
		\\
		\overset{(\ref{14}),\ref{15}}{=}\rho(Tu\cdot Tv)w+\mu(Tw)\circ\rho(Tu)v+\mu(Tw)\circ\mu(Tv)u+\rho(Tu)\circ\rho(Tv)w\\
		+\rho(Tu)\circ\mu(Tw)v+\mu(Tv\cdot Tw)u+\rho(Tv\cdot Tu)w+\mu(Tw)\circ\rho(Tv)u+\mu(Tw)\circ\mu(Tu)v\\
		+\rho(Tv)\circ\rho(Tu)w+\rho(Tv)\circ\mu(Tw)u+\mu(Tu\cdot Tw)v\\
		=\Big(\rho(Tu\ast Tv)w+\rho(Tu)\circ\rho(Tv)w+\rho(Tv)\circ\rho(Tu)w\Big)+\Big(\mu(Tv\cdot Tw)u\\
		+\mu(Tw)\circ\mu(Tv)u+\mu(Tw)\circ\rho(Tv)u+\rho(Tv)\circ\mu(Tw)u\Big)+\Big(\mu(Tu\cdot Tw)v+\mu(Tw)\circ\mu(Tu)v\\
		+\mu(Tw)\circ\rho(Tu)v+\rho(Tu)\circ\mu(Tw)v\Big)
		\\
		\overset{(\ref{7}),(\ref{8})}{=}0.
	\end{array} $\\
	Therefore, $ (V,\cdot _{T} ) $ is a left pre-Jacobi-Jordan algebra.
\end{proof}
\begin{corollary}
	Let $ T $ be a relative Rota-Baxter operator on a left pre-Jacobi-Jordan algebra $ (A,\cdot) $ with respect to a representation $ (V; \rho,\mu) $. Then $ T $ is a morphism from the left pre-Jacobi-Jordan algebra $ (V,\cdot_{T}) $ to the initial left pre-Jacobi-Jordan algebra $ (A,\cdot) $.  	
\end{corollary}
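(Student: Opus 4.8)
The plan is to show directly that $T$ respects the two multiplications, that is, that $T(u\cdot_T v)=T(u)\cdot T(v)$ for all $u,v\in V$. By Proposition \ref{l1}, the source $(V,\cdot_T)$ is already a bona fide left pre-Jacobi-Jordan algebra, and the target $(A,\cdot)$ is the given one, so the only thing left to verify is the compatibility of $T$ with the products; no structural check is needed, which is why this is merely a corollary of Proposition \ref{l1}.

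First I would write out $T(u\cdot_T v)$ using the definition of the deformed product. By equation (\ref{15}) we have $u\cdot_T v=\rho(Tu)v+\mu(Tv)u$, hence
$$T(u\cdot_T v)=T\bigl(\rho(Tu)v+\mu(Tv)u\bigr).$$
Then I would invoke the defining identity of a relative Rota-Baxter operator, namely equation (\ref{14}), which states precisely that $T\bigl(\rho(Tu)v+\mu(Tv)u\bigr)=T(u)\cdot T(v)$. Combining the two displayed equalities yields $T(u\cdot_T v)=T(u)\cdot T(v)$ for all $u,v\in V$, which is exactly the morphism condition.

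There is essentially no obstacle here: the corollary is an immediate rephrasing of the relative Rota-Baxter condition (\ref{14}) in terms of the product $\cdot_T$ introduced in (\ref{15}). The only conceptual point worth stating explicitly is that Proposition \ref{l1} guarantees $(V,\cdot_T)$ is genuinely a left pre-Jacobi-Jordan algebra, so that the word \emph{morphism} is meaningful; the computation itself is a one-line substitution and requires no further hypotheses.
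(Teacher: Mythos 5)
Your proof is correct and follows exactly the route the paper intends: the paper's own proof reads ``It follows from Proposition \ref{l1} and (\ref{14}),'' and your argument simply spells out that one-line substitution, using (\ref{15}) to expand $T(u\cdot_T v)$ and (\ref{14}) to identify it with $T(u)\cdot T(v)$, with Proposition \ref{l1} supplying the algebra structure on $(V,\cdot_T)$. Nothing is missing.
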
	
\begin{proof}
	It follows from Proposition \ref{l1} and (\ref{14}).
\end{proof}
\begin{theorem}\label{Th1}
	Let $ T $ be a relative Rota-Baxter operator on a left pre-Jacobi-Jordan algebra $ (A,\cdot) $ with respect to a representation $ (V; \rho,\mu) $. Define 
	\begin{eqnarray}
		\label{29} 
		\rho_{T}(v)x=Tv\cdot x-T(\mu(x)v) \mbox{ $\forall (x,v)\in A\otimes V$},\\
		\label{30}
		\mu_{T}(v)x=x\cdot Tv-T(\rho(x)v) \mbox{ $\forall (x,v)\in A\otimes V.$}
	\end{eqnarray}
	Then $ (A; \rho_{T},\mu_{T}) $ is a representation of the left pre-Jacobi-Jordan algebra $ (V,\cdot_{T}) $. 
\end{theorem}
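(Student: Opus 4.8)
The plan is to verify directly the two defining identities of a representation of a left pre-Jacobi-Jordan algebra (Definition \ref{rpHJJcl}), now for the algebra $(V,\cdot_T)$ acting on $A$ through $\rho_T$ and $\mu_T$. Writing $u\ast_T v:=u\cdot_T v+v\cdot_T u$ for the sub-adjacent product of $(V,\cdot_T)$, which is a left pre-Jacobi-Jordan algebra by Proposition \ref{l1}, I must establish
\begin{eqnarray}
\rho_T(u\ast_T v)&=&-\rho_T(u)\rho_T(v)-\rho_T(v)\rho_T(u),\nonumber\\
\mu_T(v)\mu_T(u)+\mu_T(u\cdot_T v)&=&-\mu_T(v)\rho_T(u)-\rho_T(u)\mu_T(v).\nonumber
\end{eqnarray}
The three tools I would lean on are: the relative Rota-Baxter identity (\ref{14}); its immediate consequences $T(u\cdot_T v)=Tu\cdot Tv$ (from (\ref{14}) and (\ref{15})) and $T(u\ast_T v)=Tu\ast Tv$; and the two representation identities (\ref{7}), (\ref{8}) of $(V,\rho,\mu)$, together with the left skew-symmetry (\ref{HpJJi2}) of $(A,\cdot)$.

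For each identity I would fix arbitrary $x\in A$ and $u,v\in V$ and expand every term by substituting the definitions (\ref{29}), (\ref{30}). The decisive preliminary step is that whenever a product $T(\xi)\cdot Tw$ appears, I rewrite it by (\ref{14}) as $T\bigl(\rho(T\xi)w+\mu(Tw)\xi\bigr)$; in each composite $\rho_T(\cdot)\rho_T(\cdot)$, $\mu_T(\cdot)\mu_T(\cdot)$, $\mu_T(\cdot)\rho_T(\cdot)$ and $\rho_T(\cdot)\mu_T(\cdot)$ this produces a pair of nested terms of the form $\pm T\bigl(\rho(T(\cdots))\cdots\bigr)$ or $\pm T\bigl(\mu(T(\cdots))\cdots\bigr)$ that cancel. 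After this cleanup every expression splits cleanly into two groups: pure products in $(A,\cdot)$, and single applications of $T$ to elements of $V$ (which I collect under one $T$ by linearity).

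The pure-$A$ group collapses by left skew-symmetry: its terms combine into an expression of the form $(x\ast Tu)\cdot Tv$, which by (\ref{HpJJi2}) equals $-x\cdot(Tu\cdot Tv)-Tu\cdot(x\cdot Tv)$, exactly cancelling the remaining associator-type terms. The $T$-group is shown to have vanishing argument by sorting its summands according to which of $u,v$ is acted upon: for the second identity the summands acting on $v$ assemble into $\bigl(\rho(x\ast Tu)+\rho(x)\rho(Tu)+\rho(Tu)\rho(x)\bigr)v$, which is zero by (\ref{7}), while those acting on $u$ assemble into $\bigl(\mu(Tv)\mu(x)+\mu(x\cdot Tv)+\mu(Tv)\rho(x)+\rho(x)\mu(Tv)\bigr)u$, which is zero by (\ref{8}); the first identity is lighter, its pure-$A$ parts matching at once via (\ref{HpJJi2}) and its $T$-group vanishing by (\ref{8}) alone. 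I expect the main obstacle to be the second ($\mu_T$) identity: it carries roughly twice as many terms as the first, and the bookkeeping of the nested-$T$ cancellations must be carried out carefully to see that, once they cancel, the residue really does regroup into the $(\ref{7})$- and $(\ref{8})$-patterns above.
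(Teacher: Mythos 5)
Your proposal is correct and follows essentially the same route as the paper's own proof: direct expansion of the two representation identities for $(A;\rho_T,\mu_T)$, using the Rota--Baxter identity (\ref{14}) to rewrite each $T(\xi)\cdot Tw$ so that the nested $T$-terms cancel, then killing the pure-$A$ part by the anti-associator skew-symmetry (\ref{HpJJi})/(\ref{HpJJi2}) and the $T$-part by (\ref{7}) and (\ref{8}) exactly as you describe (with (\ref{8}) alone sufficing for the $\rho_T$ identity, and both (\ref{7}) and (\ref{8}) needed for the $\mu_T$ identity). No gaps to report.
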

\begin{proof}
	Ideed, for all $ u,v\in V $ and $ x\in A $, we have :\\
	$ \begin{array}{lll}
		\rho_{T}(u\cdot_{T} v+v\cdot_{T} u)x+\rho_{T}(u)\circ \rho_{T}(v)x+\rho_{T}(v)\circ \rho_{T}(u)x\stackrel{(\ref{29}),(\ref{15})}{=}(Tu\cdot Tv)\cdot x\\
		-T\biggl( \mu(x)\circ \rho(Tu)v+\mu(x)\circ \mu(Tv)u  \biggr) +(Tv\cdot Tu)\cdot x-T\biggl(\mu(x)\circ \rho(Tv)u+\mu(x)\circ \mu(Tu)v\biggr)\\
		Tu\cdot (Tv\cdot x)-T\biggl( \rho(Tu)\circ \mu(x)v+\mu(T(\mu(x)v))u+\mu(Tv\cdot x)u-\mu(T(\mu(x)v))u\biggr)\\
		Tv\cdot (Tu\cdot x)-T\biggl( \rho(Tv)\circ \mu(x)u+\mu(T(\mu(x)u))v+\mu(Tu\cdot x)v-\mu(T(\mu(x)u))v\biggr)\\
		=(Tu\cdot Tv)\cdot x+Tu\cdot (Tv\cdot x)+(Tv\cdot Tu)\cdot x+Tv\cdot (Tu\cdot x)-T\biggl( \mu(x)\circ \rho(Tu)v\\+\mu(x)\circ \mu(Tv)u  \biggr)
		-T\biggl( \mu(x)\circ \rho(Tv)u+\mu(x)\circ \mu(Tu)v  \biggr)-T\biggl( \rho(Tu)\circ \mu(x)v+\mu(Tv\cdot x)u\biggr)\\
		-T\biggl( \rho(Tv)\circ \mu(x)u+\mu(Tu\cdot x)v\biggr)\\
		\stackrel{(\ref{antiHas})}{=}Aasso(Tu,Tv,x)+Aasso(Tv,Tu,x)-T\biggl(\mu(Tv\cdot x)u+\mu(x)\circ \mu(Tv)u+\mu(x)\circ \rho(Tv)u\\
		+\rho(Tv)\circ \mu(x)u \biggr)-T\biggl(\mu(Tu\cdot x)v+\mu(x)\circ \mu(Tu)v+\mu(x)\circ \rho(Tu)v
		+\rho(Tu)\circ \mu(x)v\biggr)\\
		\stackrel{(\ref{HpJJi}),(\ref{8})}{=}0.
	\end{array} $\\
	Thus we deduce that $ \rho_{T}(u\cdot_{T} v+v\cdot_{T} u)x+\rho_{T}(u)\circ \rho_{T}(v)x+\rho_{T}(v)\circ \rho_{T}(u)x=0.
	$\\\\	
	We have also:\\\\
	$ \begin{array}{lll}
		\mu_{T}(u\cdot_{T}v)x+\mu_{T}(v)\circ \mu_{T}(u)x+\mu_{T}(v)\circ \rho_{T}(u)x+\rho_{T}(u)\circ \mu_{T}(v)x=x\cdot (Tu\cdot Tv)\\
		-T\biggl(\rho(x)\circ \rho(Tu)v+\rho(x)\circ\mu(Tv)u\biggr)+(x\cdot Tu)\cdot Tv-T\biggl( \rho(T(\rho(x)u))v+\mu(Tv)\circ\rho(x)u\\
		+\rho(x\cdot Tu)v-\rho(T(\rho(x)u))v\biggr)+(Tu\cdot x)\cdot Tv-T\biggl( \rho(T(\mu(x)u))v+\mu(Tv)\circ\mu(x)u
		\\
		+\rho(Tu\cdot x)v-\rho(T(\mu(x)u))v\biggr)
		
		+Tu\cdot(x\cdot Tv)-T\biggl(\rho(Tu)\circ\rho(x)v+\mu(T(\rho(x)v))u\\
		+\mu(x\cdot Tv)u -\mu(T(\rho(x)v))u\biggr)\\
		=(x\cdot Tu)\cdot Tv+x\cdot(Tu\cdot Tv)+(Tu\cdot x)\cdot Tv+Tu\cdot (x\cdot Tv)-T\biggl(\rho(x)\circ \rho(Tu)v\\
		+\rho(x)\circ\mu(Tv)u\biggr)-T\biggl( \mu(Tv)\circ\rho(x)u
		+\rho(x\cdot Tu)v\biggr)-T\biggl( \mu(Tv)\circ\mu(x)u
		+\rho(Tu\cdot x)v\biggr)\\
		-T\biggl(\rho(Tu)\circ\rho(x)v
		+\mu(x\cdot Tv)u\biggr)\\
		\stackrel{(\ref{antiHas})}{=}Aasso(x,Tu,Tv)
		+Aasso(Tu,x,Tv)-T\biggl(\rho(Tu\cdot x+x\cdot Tu)v+\rho(Tu)\circ \rho(x)v
		\\	+\rho(x)\circ \rho(Tu)v\biggr)
		-T\biggl(\mu(x\cdot Tv)u+\mu(Tv)\circ\mu(x)u+\mu(Tv)\circ\rho(x)u+\rho(x)\circ\mu(Tv)u\biggr)\\
		\stackrel{(\ref{HpJJi}),(\ref{7}),(\ref{8})}{=}0.
	\end{array} $\\
	Therefore, $ (A;\rho_{T},\mu_{T}) $ is a representation of the left pre-Jacobi-Jordan algebra $ (V,\cdot_{T}) $.
\end{proof}
	Let $ T $ be a relative Rota-Baxter operator on a left pre-Jacobi-Jordan algebra $ (A,\cdot) $ with respect to a representation $ (V; \rho,\mu) $. Consider the new pre-Jacobi-Jordan algebra $(V,\cdot_T)$ and its representation $ (A;\rho_T,\mu_T)$. Basing in the zigzag complex developped in \cite{aod} for pre-Jacobi-Jordan algebras, we obtain the following zigzag cochain complex   $ (C^{\bullet}(V,A), \mathbf{A}^{\bullet}(V,A),d^{\bullet},\delta^{\bullet}) $ of the algebra $(V,\cdot_T)$ with coefficients in the representation $ (A;\rho_T,\mu_T)$   where\\
for all $ n\in \mathbb{N},n\geq 1 $,  $$\mathbf{C}^{n}(V,A):=\{f:V^{\otimes n}\rightarrow A, f \text{ is linear}\} $$
and $$\mathbf{A}^{n}(V,A) \mbox{ given by}$$
$f\in \mathbf{A}^{n}(V,A)$ if only if $f\in Hom(\wedge^{n-1} V\otimes V, A)$ and 
\begin{eqnarray}
	&& f(u\ast_T v, u_1,\cdots, u_{n-1}, w\cdot_T u_{n})+f(v\ast_T w, u_1,\cdots, u_{n-1}, u\cdot_T u_{n})\nonumber\\
	&&+f(w\ast_T u, u_1,\cdots, u_{n-1}, v\cdot_T u_{n})=0  \mbox{ $\forall (u,v,w, u_1,\cdots, u_n)\in V^{\otimes (n+3)},$}\label{cA}
\end{eqnarray}
\begin{eqnarray}
	\mathbf{A}^{0}(V,A)=\mathbf{C}^{0}(V,A):=\{x\in A,\rho_T(u\cdot_T v)x+\rho_T(u)\circ\rho_T(v)x=0 
	,\forall u,v\in V\}.
\end{eqnarray}
The two sequences of differential maps are given by
linear maps $$  d_T^{n}: \mathbf{C}^{n}(V,A)\rightarrow \mathbf{C}^{n+1}(V,A), $$
$$\delta_T^n : \mathbf{A}^{n}(V,A)\rightarrow \mathbf{C}^{n+1}(V,A),$$
such that
	$\forall n\in\mathbb{N}\setminus\{0\}$, $d_T^n\circ\delta_{T}^{n-1}=0$ where 
\begin{eqnarray}
	&& d_T^n f(u_1, \cdots, u_{n+1})\nonumber\\
	&&=\sum\limits_{i=1}^{n}\rho_T(u_i)f(u_1,\cdots,\widehat{u_i},\cdots, u_{n+1})
	+\sum\limits_{i=1}^{n}\mu_T(u_{n+1})f(u_1,\cdots,\widehat{u_i},\cdots, u_n,u_i)\nonumber\\
	&&+\sum\limits_{i=1}^{n}f(u_1,\cdots,\widehat{u_i},\cdots, u_n,x_i\cdot_T u_{n+1})
	+\sum\limits_{1\leq i< j\leq n} f(u_i\ast_T u_j, u_1, \cdots, \widehat{u_i}, \cdots, \widehat{u_j}, \cdots, u_{n+1}),\nonumber
\end{eqnarray}
\begin{eqnarray}
	&& \delta_T^n f(u_1, \cdots, u_{n+1})\nonumber\\
	&&=\sum\limits_{i=1}^{n}\rho_T(u_i)f(u_1,\cdots,\widehat{u_i},\cdots, u_{n+1})
	+\sum\limits_{i=1}^{n}\mu_T(u_{n+1})f(u_1,\cdots,\widehat{u_i},\cdots, u_n,u_i)\nonumber\\
	&&-\sum\limits_{i=1}^{n}f(u_1,\cdots,\widehat{u_i},\cdots, u_n,u_i\cdot_T u_{n+1})
	-\sum\limits_{1\leq i< j\leq n} f(u_i\ast_T u_j, u_1, \cdots, \widehat{u_i}, \cdots, \widehat{u_j}, \cdots, u_{n+1}), \nonumber
\end{eqnarray}
with \begin{eqnarray}
	\delta_T^{0}x(u)=d_T^{0}x(u):=\rho_T(u)x+\mu_T(u)x,  \forall (x,u)\in A\otimes V.
	\label{delta0}
\end{eqnarray}
\subsection{Linear deformations of relative Rota-Baxter operators on pre-Jacobi-Jordan algebras }
\begin{definition}\label{D10}
	Let $ T $ be a relative Rota-Baxter operator on a left pre-Jacobi-Jordan algebra $ (A,\cdot) $ with respect to a representation $ (V, \rho,\mu) $ and $ J : V\to A $ be a linear map. If $ T_{t}=T+tJ $ is still a relative Rota-Baxter operator on a left pre-Jacobi-Jordan algebra $ (A,\cdot) $ with respect to $ (V, \rho,\mu) $ for all $ t\in \mathbb{K} $, we say that $ J $ generates a linear deformation  of the relative Rota-Baxter operator $ T $.  	
\end{definition}
Observe that $J$ generates  a lineair deformation $T_{t}=T+tJ$ of the relative Rota-Baxter operator $ T $ if and only if for all $ u,v\in V $,
\begin{eqnarray}
	\label{21}
	Ju\cdot Jv=J(\rho(Ju)v+\mu(Jv)u),
\end{eqnarray}
\begin{eqnarray}
	\label{22}
	Tu\cdot Jv+Ju\cdot Tv=T(\rho(Ju)v+\mu(Jv)u)+J(\rho(Tu)v+\mu(Tv)u).
\end{eqnarray}
Eq.(\ref{21}) means that $ J $ is a relative Rota-Baxter operator on a left pre-Jacobi-Jordan algebra $ (A,\cdot) $ with respect to $ (V, \rho,\mu) $\\
Eq.(\ref{22}) is equivalent to
$$\rho_T(u)J(v)+\mu_T(v)J(u)-J(u\cdot_T v)=0,$$
i.e., $J\in Ker(\delta_T^1).$ \\

The two types of linear deformations are related as follows.
\begin{proposition}\label{Pro9}
	If $ J $ generates a linear deformation of a relative Rota-Baxter operator $ T $ on a left pre-Jacobi-Jordan algebra $ (A,\cdot) $ with respect to a representation $(V; \rho,\mu) $, then the product $ \omega_{J} $  defined by
	\begin{eqnarray}
		\label{26}
		\omega_{J}(u,v)=\rho(Ju)v+\rho(Jv)u, \forall u,v\in V,
	\end{eqnarray}
	generates a linear deformation of the left pre-Jacobi-Jordan algebra $ (V,\cdot_{T}) $.  	
\end{proposition}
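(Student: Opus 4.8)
The plan is to reduce the statement to Proposition \ref{l1}, following the same strategy used for the sub-adjacent Jacobi-Jordan algebra in the previous section: realise the deformed product as the left pre-Jacobi-Jordan structure attached to the deformed operator $T_t=T+tJ$. Since $J$ generates a linear deformation of $T$, Definition \ref{D10} guarantees that $T_t:=T+tJ$ is again a relative Rota-Baxter operator on $(A,\cdot)$ with respect to $(V,\rho,\mu)$ for every $t\in\mathbb{K}$. Applying Proposition \ref{l1} with $T_t$ in place of $T$, each $T_t$ equips $V$ with a left pre-Jacobi-Jordan product $\cdot_{T_t}$, given as in (\ref{15}) by $u\cdot_{T_t}v=\rho(T_tu)v+\mu(T_tv)u$. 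In particular $(V,\cdot_{T_t})$ is a genuine left pre-Jacobi-Jordan algebra for every value of the parameter, so the left skew-symmetry of the anti-associator of $\cdot_{T_t}$ holds automatically and never has to be verified by hand.

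The second step is to expand $\cdot_{T_t}$ in the parameter. Writing $T_t=T+tJ$ and invoking the linearity of $\rho$, $\mu$ and $T$, the product $u\cdot_{T_t}v$ is affine in $t$: its constant term is $u\cdot_T v$ and its first-order coefficient is the bilinear map $\omega_J$ of (\ref{26}). Thus $u\cdot_{T_t}v=u\cdot_T v+t\,\omega_J(u,v)$ for all $u,v\in V$. Combined with the first step, this exhibits $(u,v)\mapsto u\cdot_T v+t\,\omega_J(u,v)$ as a left pre-Jacobi-Jordan structure on $V$ for every $t$, which is exactly the assertion that $\omega_J$ generates a linear deformation of $(V,\cdot_T)$.

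I expect essentially no analytic obstacle: the only delicate fact, namely that every member of the one-parameter family $\cdot_{T_t}$ satisfies the pre-Jacobi-Jordan identity, is supplied wholesale by Proposition \ref{l1} via the relative Rota-Baxter property of $T_t$, and the rest is the bookkeeping that $\cdot_{T_t}$ is affine in $t$. The single step worth stating with care is the identification of the coefficient of $t$ in the expansion of $\rho(T_tu)v+\mu(T_tv)u$ with the bilinear map $\omega_J$ fixed in (\ref{26}); once this matching is pinned down, the conclusion is immediate, and the whole argument is the exact pre-Jacobi-Jordan analogue of the corresponding statement proved earlier for the sub-adjacent Jacobi-Jordan algebra.
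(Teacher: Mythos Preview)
Your approach is identical to the paper's: both observe that $T_t=T+tJ$ is a relative Rota-Baxter operator for every $t$, invoke Proposition~\ref{l1} to obtain the left pre-Jacobi-Jordan structure $\cdot_{T_t}$, and then expand $u\cdot_{T_t}v=\rho(T_tu)v+\mu(T_tv)u$ linearly in $t$ to read off $\omega_J$ as the first-order coefficient. Since you explicitly flagged the coefficient-matching as the delicate step: carrying out the expansion gives $\rho(Ju)v+\mu(Jv)u$, not the $\rho(Ju)v+\rho(Jv)u$ printed in~(\ref{26}); this is a typo in the statement (the paper's own proof writes exactly your expansion and silently identifies it with $\omega_J$), and your argument goes through with the intended formula.
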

\begin{proof}
	Suppose that $J$ generates a linear deformation $T_t$ of the relative Rota-Baxter operator $T$  and denote by $ \cdot_{t} $ the corresponding left pre-Jacobi-Jordan algebra structure associated to $T_{t} $. Then we have
	$$
	u\cdot_{t}v=\rho(T_{t}u)v+\mu(T_{t}v)u=u\cdot_{T}v+t\omega_{J}(u,v),\forall u,v\in V.$$
	It follows that $\omega_{J} $ generates a linear deformation of $ (V,\cdot_{T}) $. 
\end{proof}
\begin{definition}\label{D9}
	Let $ T $ and $ T' $ be two relative Rota-Baxter operators on a left pre-Jacobi-Jordan algebra $ (A,\cdot) $ with respect to a representation $ (V, \rho,\mu) $. A morphism from $ T' $ to $ T $ consits of a left pre-Jacobi-Jordan algebras morphism $\phi_{A} : A\to A $ and a linear map $ \phi_{V} : V\to V $ such that:   
	\begin{eqnarray}
		\label{36}
		T \circ \phi_{V}=\phi_{A}\circ T',
	\end{eqnarray}
	\begin{eqnarray}
		\label{37}
		\phi_{V}\circ\rho(x)=\rho(\phi_{A}(x))\circ\phi_{V} \mbox{ $\forall x\in A$},
	\end{eqnarray}
	\begin{eqnarray}
		\label{38}
		\phi_{V}\circ\mu(x)=\mu(\phi_{A}(x))\circ\phi_{V} \mbox{ $\forall x\in A.$}
	\end{eqnarray}	
	
	In particular, if both $ \phi_A$ and $ \phi_{V}$ are inversible, $ (\phi_{A},\phi_{V}) $ is called an isomorphism from $ T' $ to $ T $.
\end{definition}
\begin{proposition}\label{Pro8}
	Let $ T $ and $ T' $ be two relative Rota-Baxter operators on a left pre-Jacobi-Jordan algebra $ (A,\cdot) $ with respect to a representation $ (V; \rho,\mu) $ and $ (\phi_{A},\phi_{V}) $ a morphism (resp. an isomorphism) from $ T' $ to $ T $. Then $ \phi_{V} $ is a morphism (resp. an isomorphism) of left pre-Jacobi-Jordan algebra $ (V,\cdot_{T'}) $ to $ (V,\cdot_{T})$.
\end{proposition}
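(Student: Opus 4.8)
The plan is to verify directly the single morphism identity $\phi_V(u\cdot_{T'}v)=\phi_V(u)\cdot_T\phi_V(v)$ for all $u,v\in V$, which is exactly what it means for $\phi_V$ to be a morphism of the left pre-Jacobi-Jordan algebras $(V,\cdot_{T'})$ and $(V,\cdot_T)$. This is the pre-Jacobi-Jordan analogue of Proposition \ref{Pro8j}, so I expect the argument to mirror it closely; the only new feature is that the deformed product $\cdot_T$ from \eqref{15} involves both structure maps $\rho$ and $\mu$, so both intertwining relations \eqref{37} and \eqref{38} will be needed rather than just the single relation that sufficed in the Jacobi-Jordan case.

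First I would expand the left-hand side using the definition \eqref{15} of $\cdot_{T'}$ together with the linearity of $\phi_V$, writing $\phi_V(u\cdot_{T'}v)=\phi_V(\rho(T'u)v)+\phi_V(\mu(T'v)u)$. Then I would push $\phi_V$ through each structure map separately: relation \eqref{37} converts $\phi_V\circ\rho(T'u)$ into $\rho(\phi_A(T'u))\circ\phi_V$, while relation \eqref{38} converts $\phi_V\circ\mu(T'v)$ into $\mu(\phi_A(T'v))\circ\phi_V$. This produces the expression $\rho(\phi_A(T'u))\phi_V(v)+\mu(\phi_A(T'v))\phi_V(u)$.

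Next I would invoke the compatibility \eqref{36}, namely $\phi_A\circ T'=T\circ\phi_V$, to replace $\phi_A(T'u)$ by $T(\phi_V(u))$ and $\phi_A(T'v)$ by $T(\phi_V(v))$. The expression then becomes $\rho(T(\phi_V(u)))\phi_V(v)+\mu(T(\phi_V(v)))\phi_V(u)$, which is precisely $\phi_V(u)\cdot_T\phi_V(v)$ by \eqref{15}. This closes the chain of equalities and shows that $\phi_V$ is a morphism of left pre-Jacobi-Jordan algebras. For the parenthetical isomorphism statement, I would simply observe that when $\phi_A$ and $\phi_V$ are both invertible, the morphism $\phi_V$ is in particular bijective and therefore an isomorphism.

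I expect no real obstacle here: every step is a direct substitution, and the whole verification is one short chain of equalities. The sole point requiring care — and the only substantive difference from Proposition \ref{Pro8j} — is to remember to apply \emph{both} \eqref{37} and \eqref{38}, since the $\rho$-part and the $\mu$-part of $\cdot_{T'}$ are transported to $\cdot_T$ by different intertwining relations.
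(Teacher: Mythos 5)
Your proposal is correct and follows exactly the same route as the paper's proof: expand $\phi_V(u\cdot_{T'}v)$ via the definition \eqref{15} of $\cdot_{T'}$, intertwine $\phi_V$ past $\rho$ and $\mu$ using \eqref{37} and \eqref{38}, and then apply \eqref{36} to replace $\phi_A(T'u)$, $\phi_A(T'v)$ by $T(\phi_V(u))$, $T(\phi_V(v))$, yielding $\phi_V(u)\cdot_T\phi_V(v)$. Your added remark that invertibility of $\phi_V$ gives the isomorphism case is a sensible (if implicit in the paper) closing observation.
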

\begin{proof}
	Let $ u,v\in V $, then by (\ref{36})-(\ref{38}),  we have:\\
	$ \begin{array}{lll}
		\phi_{V} (u\cdot_{T'} v)=\phi_{V}(\rho(T'u)v+\mu(T'v)u)=\phi_{V}(\rho(T'u)v)+\phi_{V}(\mu(T'v)u)\\
		=\rho(\phi_A(T'u))\phi_{V}(v)+\mu(\phi_A(T'v))\phi_{V}(u)=\rho(T(\phi_{V}(u)))\phi_{V}(v)+\mu(T(\phi_{V}(v)))\phi_{V}(u)\\
		=\phi_{V}(u)\cdot_{T}\phi_{V}(v).
	\end{array} $\\
	Then $ \phi_{V} $ is a morphism of left pre-Jacobi-Jordan algebra $ (V,\cdot_{T'}) $ to $ (V,\cdot_{T}) $ .
\end{proof}
\begin{definition}\label{D12}
	Let $ T $ be a relative Rota-Baxter operator on a left pre-Jacobi-Jordan algebra $ (A,\cdot) $ with respect to a representation $(V;\rho,\mu) $. Two linear deformations $ T^{1}_{t}=T+tJ_{1} $ and $ T^{2}_{t}=T+tJ_{2} $ of $T$ are said to be equivalent if there exists $ x\in C^0(V,A) $ such that $ (Id_{A}+tL_{x}+tR_x,Id_{V}+t\rho(x)+t\mu(x)) $ is a morphism from $ T_{t}^{2} $ to $ T_{t}^{1} $. In particular, a linear deformation $ T_{t}=T+tJ $ of a relative Rota-Baxter operator $ T $ is said to be trivial if there exists $ x\in C^0(V,A) $ such that $ (Id_{A}+tL_{x}+tR_x,Id_{V}+t\rho(x)+t\mu(x)) $ is a morphism from $ T_{t} $ to $ T $.  	
\end{definition}
Suppose that there exists $ x\in C^0(V,A) $ such that $ (Id_{A}+tL_{x}+tR_x,Id_{V}+t\rho(x)+t\mu(x)) $ is a morphism from $T_{t}^{2} $ to $ T_{t}^{1} $. Then,
 $ Id_{A}+tL_{x}+tR_x $ is a left pre-Jacobi-Jordan algebras morphism of $ (A,\cdot) $ and (\ref{36})-(\ref{38}) hold.\\  
$(Id_{A}+tL_{x}+tR_x)(y\cdot z)=(Id_{A}+tL_{x}+tR_x)(y)\cdot(Id_{A}+tL_{x}+tR_x)(z), \forall y,z\in A $,\\
if and only if $ x $ satisfies
\begin{eqnarray}
	&&(x\cdot y)\cdot (x\cdot z)+(x\cdot y)\cdot (z\cdot x)\nonumber\\
	&&(y\cdot x)\cdot (x\cdot z)+(y\cdot x)\cdot (z\cdot x)=0, \, \forall y, z\in A,\label{39}\\
	&&(x\cdot y)\cdot z+(y\cdot x)\cdot z+ y\cdot(x\cdot z)+y\cdot(z\cdot x)\nonumber\\
	&&=x\cdot(y\cdot z)+(y\cdot z)\cdot x,\,\ \forall y, z\in A.\label{40}
\end{eqnarray}
Then by Eq.(\ref{36}), we get \\\\
$(T+tJ_{1})\circ(Id_{V}+t\rho(x)+t\mu(x))(v)=(Id_{A}+tL_{x}+tR_x)\circ(T+tJ_{2})(v),\hspace*{0.5cm}\forall v\in V$,\\\\
which holds if and only if  $ x $ satisfies
\begin{eqnarray}
&&J_{2}-J_{1}=T\circ\rho(x)+T\circ\mu(x)-L_{x}\circ T-R_x\circ T=-\partial_T(x), \label{41}\\
&&J_{1}\circ\rho(x)+J_1\circ\mu(x)=L_{x}\circ J_{2}+R_x\circ J_2. \label{42}
\end{eqnarray}
Also by Eq.(\ref{37}), we obtain \\\\
$ (Id_{V}+t\rho(x)+t\mu(x))\circ\rho(y)=\rho(y+tL_{x}(y)+tR_x(y))\circ(Id_{V}+t\rho(x)+t\mu(x)), $\\\\
which holds if and only if $ x $ satisfies
\begin{eqnarray}
&&\rho(x\cdot y)\circ\rho(x)+\rho(y\cdot x)\circ\rho(x)+\rho(x\cdot y)\circ\mu(x)+\rho(y\cdot x)\circ\mu(x)=0, \,\ \forall y\in A,\label{43}\\
&&\rho(x\cdot y)+\rho(y\cdot x)+\rho(y)\circ \rho(x)+\rho(y)\circ \mu(x)=\rho(x)\circ \rho(y)+\mu(x)\circ \rho(y),\,\ \forall y\in A.\label{44}
\end{eqnarray}
Finally, Eq.(\ref{38}) gives \\\\
$ (Id_{V}+t\rho(x)+t\mu(x))\circ\mu(y)=\mu(y+tL_{x}(y)+tR_x(y))\circ(Id_{V}+t\rho(x)+t\mu(x)), \hspace*{0.6cm}\forall y\in A $,\\\\
which holds if and only if  $ x $ satisfies
\begin{eqnarray}
	&&\mu(x\cdot y)\circ\mu(x)+\mu(y\cdot x)\circ\mu(x)+\mu(x\cdot y)\circ\rho(x)+\mu(y\cdot x)\circ\rho(x)=0, \,\ \forall y\in A,\label{45}\\
	&&\mu(x\cdot y)+\mu(y\cdot x)+\mu(y)\circ \mu(x)+\mu(y)\circ \rho(x)=\mu(x)\circ \mu(y)+\rho(x)\circ \mu(y),\,\ \forall y\in A.\label{46}
\end{eqnarray}
Note that Eq. (\ref{41}) means that $$J_{2}-J_{1}=-\partial_T(x).$$ Thus, we have the following result:
\begin{theorem}
Let $T$ be a relative Rota-Baxter operator on  a letf pre-Jacobi-Jordan algebra $(A,\cdot)$ with
respect to a representation $(V,\rho,\mu)$. If two linear deformations $T_{t}^1 = T +tJ_1$ and $T_{t}^2= T +tJ_2$ are equivalent, then $J_1$ and $J_2$ are in the same cohomology class of $H^1 (V,A)$.	
\end{theorem}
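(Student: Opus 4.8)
The plan is to read off the result directly from the linear-term analysis of the equivalence condition already carried out above, after first confirming that each $J_i$ genuinely defines a class in $H^1(V,A)$.

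First I would recall from Definition \ref{D10} and the remark immediately following it that, since $J_1$ and $J_2$ each generate a linear deformation of $T$, both satisfy Eq. (\ref{22}). As noted there, this is exactly the condition $\rho_T(u)J_i(v)+\mu_T(v)J_i(u)-J_i(u\cdot_T v)=0$, that is, $J_i\in\mathrm{Ker}(\delta_T^1)=Z^1(V,A)$ for $i=1,2$. Hence both deformation-generating maps are $1$-cocycles.

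Next I would invoke the equivalence hypothesis: by Definition \ref{D12} there is some $x\in C^0(V,A)$ for which $(Id_A+tL_x+tR_x,\,Id_V+t\rho(x)+t\mu(x))$ is a morphism from $T_t^2$ to $T_t^1$. Expanding the morphism relation (\ref{36}) and collecting the coefficient of $t$ yields precisely Eq. (\ref{41}),
\begin{equation*}
J_2-J_1=T\circ\rho(x)+T\circ\mu(x)-L_x\circ T-R_x\circ T=-\partial_T(x).
\end{equation*}
It then remains only to identify the right-hand side as an honest coboundary, i.e. to check that $\partial_T(x)=\delta_T^0(x)$. Evaluating at $u\in V$ and using the definitions (\ref{29})--(\ref{30}) of $\rho_T$ and $\mu_T$ together with $L_x(Tu)=x\cdot Tu$ and $R_x(Tu)=Tu\cdot x$, I would compute
\begin{equation*}
\partial_T(x)(u)=Tu\cdot x+x\cdot Tu-T(\rho(x)u)-T(\mu(x)u)=\rho_T(u)x+\mu_T(u)x,
\end{equation*}
which is exactly $\delta_T^0(x)(u)$ by (\ref{delta0}). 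Therefore $J_2-J_1=-\delta_T^0(x)\in B^1(V,A)$.

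Combining the two steps, $J_1$ and $J_2$ are cocycles differing by the coboundary $\delta_T^0(x)$, so they represent the same class in $H^1(V,A)$, as claimed. No step is a genuine obstacle here, since all the analytic content already sits in the displayed deformation and equivalence computations preceding the theorem; the only thing requiring care is the purely formal bookkeeping identity $\partial_T=\delta_T^0$, whose verification is the short calculation indicated above.
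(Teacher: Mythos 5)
Your proposal is correct and follows essentially the same route as the paper: the paper's proof consists precisely of the displayed linearization of the morphism condition (\ref{36}) yielding Eq.~(\ref{41}), $J_2-J_1=-\partial_T(x)$, together with the earlier observation that a deformation generator lies in $\mathrm{Ker}(\delta_T^1)$. Your only addition is the explicit bookkeeping check that $\partial_T(x)=\delta_T^0(x)$ via (\ref{29})--(\ref{30}) and (\ref{delta0}), which the paper leaves implicit in its notation; this is a welcome clarification but not a different argument.
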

\begin{definition}
	Let $T$ be a relative Rota-Baxter operator on a pre-Jacobi-Jordan algebra $(A,\cdot )$ with 
	respect to a representation $(V, \rho,\mu ).$ An element $x\in C^0(V,A)$ is called a Nijenhuis element associated to $T$ if $x$ satisfies (\ref{39}), (\ref{40}), (\ref{43}), (\ref{44}), (\ref{45}), (\ref{46}) and the equation
	\begin{eqnarray}
	(L_x+R_x)\circ T\circ (\rho(x)+\mu(x))-(L_x+R_x)\circ (L_x+R_x)\circ T=0.
	\label{O1}
	\end{eqnarray}
Denote by $N_{ij}(T )$ the set of Nijenhuis elements associated to a relative Rota-Baxter operator $T.$
\end{definition}
The following lemma is very useful.
\begin{lemma}\label{LemT2}
 Let $T$ be a relative Rota-Baxter operator on a left pre-Jacobi-Jordan algebra $(A,\cdot)$ with
 respect to a representation $(V,\rho,\mu)$. 
 Let $\phi_A: A\rightarrow A$ be a pre-Jacobi-Jordan algebras isomorphism and $\phi_V: V\rightarrow V$
  an isomorphism of vector spaces such that Eqs.
  (\ref{37})-(\ref{38}) hold. Then, 
  $	\varphi=\varphi_A^{-1}\circ T\circ\varphi_V$ 
is a relative Rota-Baxter operator on the on the left pre-Jacobi-Jordan algebra $(A,\cdot)$ with
respect to a representation $(V,\rho,\mu)$.
\end{lemma}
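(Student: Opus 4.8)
The plan is to mirror the argument of Lemma~\ref{LemT1}, adapting it to the left pre-Jacobi-Jordan setting where the relative Rota-Baxter identity (\ref{14}) carries both $\rho$ and $\mu$ rather than $\rho$ alone. First I would record the elementary equivalence
\[
\varphi=\varphi_A^{-1}\circ T\circ\varphi_V\iff \varphi_A\circ \varphi=T\circ \varphi_V,
\]
which holds because $\varphi_A$ is invertible. I would also note at the outset that since $\varphi_A$ is a pre-Jacobi-Jordan algebra isomorphism, its inverse $\varphi_A^{-1}$ is again a morphism of $(A,\cdot)$; this is what lets me pull $\varphi_A^{-1}$ out of a product.

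The core of the proof is a direct computation of $\varphi(u)\cdot\varphi(v)$ for $u,v\in V$. Unfolding the definition of $\varphi$ and using that $\varphi_A^{-1}$ is a morphism, I would write $\varphi(u)\cdot\varphi(v)=\varphi_A^{-1}\bigl(T(\varphi_V(u))\cdot T(\varphi_V(v))\bigr)$ and then apply the defining identity (\ref{14}) to obtain
\[
\varphi(u)\cdot\varphi(v)=(\varphi_A^{-1}\circ T)\Bigl(\rho\bigl(T(\varphi_V(u))\bigr)\varphi_V(v)+\mu\bigl(T(\varphi_V(v))\bigr)\varphi_V(u)\Bigr).
\]
Next I would rewrite $T\circ\varphi_V=\varphi_A\circ\varphi$ inside both arguments of $\rho$ and $\mu$, and then push $\varphi_V$ across $\rho$ and $\mu$ using the intertwining relations (\ref{37}) and (\ref{38}) respectively. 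This is exactly where the two hypotheses enter, the $\mu$-intertwining (\ref{38}) being the new ingredient absent from Lemma~\ref{LemT1}. Factoring out $\varphi_V$ and then $\varphi_A^{-1}\circ T$ yields
\[
\varphi(u)\cdot\varphi(v)=\varphi\Bigl(\rho(\varphi(u))v+\mu(\varphi(v))u\Bigr),
\]
which is precisely (\ref{14}) for $\varphi$, so $\varphi$ is a relative Rota-Baxter operator with respect to $(V,\rho,\mu)$.

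I do not expect a genuine obstacle: the argument is a bookkeeping exercise strictly parallel to Lemma~\ref{LemT1}. The only points requiring care are to apply the correct intertwining relation to each of the $\rho$- and $\mu$-terms separately, and to justify pulling $\varphi_A^{-1}$ out of the product $T(\varphi_V(u))\cdot T(\varphi_V(v))$ via the fact that the inverse of a pre-Jacobi-Jordan isomorphism is a morphism. Everything else is substitution along the equivalence $\varphi_A\circ\varphi=T\circ\varphi_V$.
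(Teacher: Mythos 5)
Your proposal is correct and follows essentially the same route as the paper's proof: the same equivalence $\varphi_A\circ\varphi=T\circ\varphi_V$, the same unfolding of $\varphi(u)\cdot\varphi(v)$ via the morphism property of $\varphi_A^{-1}$ and identity (\ref{14}), and the same use of the intertwining relations (\ref{37})--(\ref{38}) to push $\varphi_V$ across $\rho$ and $\mu$ before refactoring. Nothing further is needed.
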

\begin{proof}  First, observe that
	\begin{equation}
		\varphi=\varphi_A^{-1}\circ T\circ\varphi_V\iff \varphi_A\circ \varphi=T\circ \varphi_V. 
		\label{R2}
	\end{equation}
Next, let $ u,v\in V$, then we obtain\\
	$ \begin{array}{lll}
	\varphi(u)\cdot\varphi(v)&=&\bigg(\varphi_A^{-1}\circ T\circ\varphi_V\bigg)(u)\cdot \bigg(\varphi_A^{-1}\circ T\circ\varphi_V\bigg)(v)  \mbox{( by the definition of $ \varphi$)} \\
	&=&\varphi_A^{-1}\bigg(T(\varphi_{V}(u))\bigg)\cdot\varphi_A^{-1}\bigg(T(\varphi_{V}(v))\bigg)\\
	&=&\varphi_A^{-1}\bigg(T(\varphi_{V}(u))\cdot T(\varphi_{V}(v))\bigg)(\text{ since $ \varphi_A^{-1} $ is a morphism})\\
	&=&\varphi_A^{-1}\bigg(T\bigg(\rho(T(\varphi_{V}(u)))\varphi_{V}(v)+\mu(T(\varphi_{V}(v)))\varphi_{V}(u)\bigg)\bigg)\mbox{ ( by $ (\ref{14})$ )}\\
	&=&(\varphi_A^{-1}\circ T)\bigg(\rho(T(\varphi_{V}(u)))\varphi_{V}(v)+\mu(T(\varphi_{V}(v)))\varphi_{V}(u)\bigg)\\
	&=&(\varphi_A^{-1}\circ T)\bigg(\rho((T\circ\varphi_{V})(u))\varphi_{V}(v)+\mu((T\circ\varphi_{V})(v))\varphi_{V}(u)\bigg)\\
	&=&(\varphi_A^{-1}\circ T)\bigg(\rho((\varphi_A\circ\varphi)(u))\varphi_{V}(v)+\mu((\varphi_A\circ\varphi)(v))\varphi_{V}(u)\bigg) \mbox{( by (\ref{R2}) )}\\
\end{array} $\\
$ \begin{array}{lll}
	&=&(\varphi_A^{-1}\circ T)\bigg(\rho((\varphi_A(\varphi(u)))\varphi_{V}(v)+\mu((\varphi_A(\varphi(v)))\varphi_{V}(u)\bigg) \\
	&=&(\varphi_A^{-1}\circ T)\bigg(\varphi_{V}\circ\rho(\varphi(u))v+\varphi_{V}\circ\mu(\varphi(v))u\bigg)\mbox{ ( by (\ref{37})-(\ref{38}) )} \\
	&=&(\varphi_A^{-1}\circ T\circ \varphi_{V})\bigg(\rho(\varphi(u))v+\mu(\varphi(v))u\bigg)\\
	&=&\varphi\bigg(\rho(\varphi(u))v+\mu(\varphi(v))u\bigg).
	\end{array} $ \\
	Therefore, $	\varphi=\varphi_A^{-1}\circ T\circ\varphi_V$ 
	is a relative Rota-Baxter operator  on the pre-Jacobi-Jordan algebra $(A,\cdot)$ with
	respect to a representation $(V;\rho,\mu)$. 
\end{proof}
As Jacobi-Jordan algebras case, By Eqs. (\ref{39})-(\ref{46}), it is obvious that a trivial linear deformation gives rise to a Nijenhuis
element. Conversely, a Nijenhuis element can also generate a trivial linear deformation as the
following theorem shows.
\begin{theorem}\label{thJPP}
	Let $T$ be a relative Rota-Baxter operator on a left pre-Jacobi-Jordan algebra $(A,\cdot)$ with
	respect to a representation $(V,\rho,\mu).$ Then for any $x\in N_{ij}(T ),$ $T_t=T + tJ$ with 
	$J=-\partial_T(x)$	is a
	trivial linear deformation of the relative Rota-Baxter operator $T$.
\end{theorem}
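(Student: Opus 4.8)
The plan is to mirror the proof of Theorem \ref{thJP} from the Jacobi-Jordan setting, replacing the single left-multiplication operator $L_x$ by the combined operator $L_x+R_x$ and the single map $\rho(x)$ by $\rho(x)+\mu(x)$, which is exactly what the two-operator structure of the pre-Jacobi-Jordan cohomology dictates. First I would put the candidate deformation generator into operator form: starting from $J=-\partial_T(x)$ and unfolding $\partial_T$ via (\ref{delta0}), (\ref{29}) and (\ref{30}), one recovers the expression in (\ref{41}),
\begin{eqnarray*}
J=T\circ\rho(x)+T\circ\mu(x)-L_x\circ T-R_x\circ T=T\circ(\rho(x)+\mu(x))-(L_x+R_x)\circ T.
\end{eqnarray*}

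Next I would feed this into the Nijenhuis equation (\ref{O1}). Composing $J$ on the left with $L_x+R_x$ and substituting the above, the two terms of (\ref{O1}) cancel exactly, giving $(L_x+R_x)\circ J=0$. From $T_t=T+tJ$ this at once yields $(L_x+R_x)\circ T_t=(L_x+R_x)\circ T$, and a short rearrangement of $T_t-T=tJ$ then produces the key identity
\begin{eqnarray*}
\bigl(Id_A+t(L_x+R_x)\bigr)\circ T_t=T\circ\bigl(Id_V+t(\rho(x)+\mu(x))\bigr).
\end{eqnarray*}

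Then I would verify the three morphism conditions of Definition \ref{D9}. Using (\ref{39}) and (\ref{40}) one checks that $Id_A+t(L_x+R_x)$ is a morphism of $(A,\cdot)$ by expanding $(Id_A+t(L_x+R_x))(y\cdot z)$ and matching it termwise against the product of the images; then (\ref{43})--(\ref{44}) supply the $\rho$-intertwining (\ref{37}) for $Id_V+t(\rho(x)+\mu(x))$, and (\ref{45})--(\ref{46}) supply the $\mu$-intertwining (\ref{38}). For $t$ sufficiently small, $Id_A+t(L_x+R_x)$ and $Id_V+t(\rho(x)+\mu(x))$ are invertible, so the displayed identity rewrites as $T_t=(Id_A+t(L_x+R_x))^{-1}\circ T\circ(Id_V+t(\rho(x)+\mu(x)))$. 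Applying Lemma \ref{LemT2} shows $T_t$ is again a relative Rota-Baxter operator; equivalently $J$ satisfies (\ref{21}) and (\ref{22}), and by construction $(Id_A+t(L_x+R_x),Id_V+t(\rho(x)+\mu(x)))$ is a morphism from $T_t$ to $T$, so the deformation is trivial.

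The main obstacle I expect is the termwise verification that $Id_A+t(L_x+R_x)$ is an algebra morphism: because $\cdot$ is noncommutative here, expanding the coefficients of $t$ and $t^2$ produces more cross terms than in the commutative Jacobi-Jordan case, and one must check that precisely the combinations appearing in (\ref{39}) (the $t^2$ part) and (\ref{40}) (the $t^1$ part) arise, with both the $x\cdot(-)$ and $(-)\cdot x$ contributions correctly bookkept. The analogous care is needed for the $\rho$- and $\mu$-intertwining: within each pair, (\ref{43}) and (\ref{45}) encode the $t^2$ coefficients while (\ref{44}) and (\ref{46}) encode the $t^1$ coefficients. Once this bookkeeping is carried out, the remainder of the argument is formal and parallels Theorem \ref{thJP} verbatim.
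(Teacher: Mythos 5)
Your proposal is correct and follows essentially the same route as the paper's own proof: deriving the operator form of $J$ from (\ref{delta0}), (\ref{29})--(\ref{30}), using the Nijenhuis condition (\ref{O1}) to get $(L_x+R_x)\circ J=0$, rearranging to the intertwining identity $(Id_A+tL_x+tR_x)\circ T_t=T\circ(Id_V+t\rho(x)+t\mu(x))$, verifying the morphism conditions via (\ref{39})--(\ref{46}), and invoking Lemma \ref{LemT2} for small $t$ to conclude triviality.
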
	
\begin{proof}
Let $ T $ be a Rota-Baxter relative operator on pre-Jacobi-Jordan algebra $ (A,\cdot) $ with respect to a representation $(V;\rho,\mu)$.
Pick $ x\in N_{ij}(T) $ and $ v\in V $ then, we have :\\
$\begin{array}{lll}
Jv&&=-\partial_{T}(x)v\\
&&=-\rho_{T}(v)x-\mu_{T}(v)x \text{( by (\ref{delta0}) ) }
\end{array} $\\
i.e. by (\ref{29})-(\ref{30}), we obtain  \begin{eqnarray}
J=T\circ\rho(x)+T\circ\mu(x)-L_{x}\circ T-R_{x}\circ T.
\label{JJ1}
\end{eqnarray}
Therefore\\\\
$ \begin{array}{lll}
 L_{x}\circ J+R_{x}\circ J&=&L_{x}\circ T\circ\rho(x)+L_{x}\circ T\circ\mu(x)-L_{x}\circ L_{x}\circ T-L_{x}\circ R_{x}\circ T\\
 &+&R_{x}\circ T\circ \rho(x)+R_{x}\circ T\circ\mu(x)-R_{x}\circ L_{x}\circ T-R_{x}\circ R_{x}\circ T\\
 &=&(L_{x}+R_{x})\circ T\circ(\rho(x)+\mu(x))-(L_{x}+R_{x})\circ(R_{x}+L_{x})\circ T\\
\end{array} $\\
i.e.,  by (\ref{O1}), the following holds
\begin{eqnarray}
L_{x}\circ J+R_{x}\circ J=0.
\label{JJ2}
\end{eqnarray}
From the relation $ T_{t}=T+tJ $ we compute \\\\
$ \begin{array}{lll}
L_{x}\circ T_{t}+R_{x}\circ T_{t}&=&L_{x}\circ T+R_{x}\circ T+t(L_{x}\circ J+R_{x}\circ J),
\end{array} $\\\\
i.e., by  (\ref{JJ2}), we get
\begin{eqnarray}
L_{x}\circ T_{t}+R_{x}\circ T_{t}=L_{x}\circ T+R_{x}\circ T
\label{JJ3}
\end{eqnarray}
By $ T_{t}=T+tJ $, we have also\\\\
$ \begin{array}{lll}
T_{t}-T&=&tJ\\
&=&t\bigg(T\circ\rho(x)+T\circ\mu(x)-L_{x}\circ T-R_{x}\circ T\bigg)\text{ (by (\ref{JJ1})  )}\\
&=&t\bigg(T\circ\rho(x)+T\circ\mu(x)-L_{x}\circ T_{t}-R_{x}\circ T_{t}\bigg)\text{ (by (\ref{JJ3}) )}\\
T_{t}+tL_{x}\circ T_{t}+tR_{x}\circ T_{t}&=&T+tT\circ\rho(x)+tT\circ\mu(x)
\end{array} $\\\\
and therefore, the following holds
\begin{eqnarray}
\bigg(Id_{A}+tL_{x}+tR_{x}\bigg)\circ T_{t}=T\circ \bigg(Id_{V}+t\rho(x)+t\mu(x)\bigg).
\label{JJ4}
\end{eqnarray}
Now, let $ y,z\in A $ and $ t\in\mathbb{K} $. Then, by  (\ref{39}) and (\ref{40}), we have\\
\\
$ \begin{array}{lll}
y\cdot z+tx\cdot (y\cdot z)+t(y\cdot z)\cdot x=y\cdot z+ty\cdot (x\cdot z)+ty\cdot(z\cdot x)+t(x\cdot y)\cdot z\\
+t^{2}(x\cdot y)\cdot(x\cdot z)+t^{2}(x\cdot y)\cdot(z\cdot x)+t(y\cdot x)\cdot z+t^{2}(y\cdot x)\cdot(x\cdot z)+t^{2}(y\cdot x)\cdot(z\cdot x)
\end{array} $\\\\
i.e.,
\begin{eqnarray}
\bigg(Id_{A}+tL_{x}+tR_{x}\bigg)(y\cdot z)=\bigg(Id_{A}+tL_{x}+tR_{x}\bigg)(y)\cdot \bigg(Id_{A}+tL_{x}+tR_{x}\bigg)(z).
\end{eqnarray}
Next, by (\ref{43}) and (\ref{44}), we have\\\\
$ \begin{array}{ll}
\rho(y)+t\rho(x)\circ \rho(y)+t\mu(x)\circ \rho(y)=\rho(y)+t\rho(y)\circ \rho(x)+t\rho(y)\circ\mu(x)+t\rho(x\cdot y)\\
+t^{2}\rho(x\cdot y)\circ \rho(x)+t^{2}\rho(x\cdot y)\circ \mu(x)+t\rho(y\cdot x)+t^{2}\rho(y\cdot x)\circ \rho(x)+t^{2}\rho(y\cdot x)\circ \mu(x)
\end{array}\\\\ $
and then,  
\begin{eqnarray}
\big(Id_{V}+t\rho(x)+t\mu(x)\big)\circ \rho(y)=\rho\big(y+tL_{x}(y)+tR_{x}(y)\big)\circ\big(Id_{V}+t\rho(x)+t\mu(x)\big).
\end{eqnarray}
Also, by (\ref{45}) and (\ref{46}), we compute\\\\
$\begin{array}{lll}
\mu(y)+t\rho(x)\circ \mu(y)+t\mu(x)\circ \mu(y)=\mu(y)+t\mu(y)\circ \rho(x)+t\mu(y)\circ \mu(x)+t\mu(x\cdot y)\\
+t^{2}\mu(x\cdot y)\circ \rho(x)
+t^{2}\mu(x\cdot y)\circ \mu(x)+t\mu(y\cdot x)+t^{2}\mu(y\cdot x)\circ\rho(x)+t^{2}\mu(y\cdot x)\circ\mu(x)
\end{array}\\\\  $ 
i.e.,
\begin{eqnarray}
\big(Id_{V}+t\rho(x)+t\mu(x)\big)\circ \mu (y)=\mu\big(y+tL_{x}y+tR_{x}y\big)\circ\big(Id_{V}+t\rho(x)+t\mu(x)\big).
\end{eqnarray}
For $ t $ sufficiently small, we see that $ Id_{A}+tL_{x}+tR_{x} $ is a pre- Jacobi-Jordan algebras isomorphism and $ Id_{V}+t\rho(x)+t\mu(x) $
is an isomorphism of vector spaces. Thus, by $ (\ref{JJ4}) $ we have\\
$$ T_{t}=\bigg(Id_{A}+tL_{x}+tR_{x}\bigg)^{-1}\circ T\circ \bigg(Id_{V}+t\rho(x)+t\mu(x)\bigg).$$ 
By Lemma \ref{LemT2}, we deduce that $ T_{t} $ is a relative Rota-Baxter operator on the pre-Jacobi-Jordan algebra
$ (A,\cdot) $ with respect to the representation $ (V;\rho,\mu) $, for t sufficiently small. Hence, $ J $ given by Eq. \ref{JJ1} satisfies the conditions (\ref{21}) and (\ref{22}). It follows that, for any $x\in N_{ij}(T ),$ $T_t=T + tJ$ with 
$J=-\partial_{T}(x)$ is a
trivial linear deformation of the relative Rota-Baxter operator $T$.
\end{proof}

\section{Nijenhuis operators and relative Rota-Baxter operators}
\begin{definition}\cite{aod}\label{D13}
	Let $ (A,\cdot) $ be a left pre-Jacobi-Jordan algebra. A linear map $ N : A\to A $ is said to be a Nijenhuis operator if 
	\begin{eqnarray}
		\label{9}
		N(x)\cdot N(y)=N\bigg(N(x)\cdot y+x\cdot N(y)-N(x\cdot y)\bigg),&\forall x,y\in A.
	\end{eqnarray}
\end{definition}
\begin{remark}
	Note that a Rota-Baxter operator of weight -1 on left pre-Jacobi-Jordan algebra $ A $ is exactly an Nijenhuis operator.
\end{remark}
\begin{lemma} \label{l3}	
	Let $ (A,\cdot) $ be a left  pre-Jacobi-Jordan algebra and $ (V;\rho,\mu) $ be a representation of $A$. A linear map $ T : V\to A $ is a relative Rota-Baxter operator on $ (A,\cdot) $ with respect  to $ (V;\rho,\mu) $ if and only if for any $ \lambda \in \mathbb{K} $, the linear map $ N_{T}=\begin{pmatrix} 0&T\\ 0&-\lambda Id_{V} \end{pmatrix} :  A\oplus V\to A\oplus V $ is a Nijenhuis operator on the semi-direct  $ (A\oplus V,\circledast) $ of  $ (A,\cdot) $ by $ (V;\rho,\mu) $ , where $ \circledast $ is given by $ (\ref{spjj})$.	
\end{lemma}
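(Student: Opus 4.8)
The plan is to unwind the Nijenhuis identity (\ref{9}) for $N_T$ on the semi-direct product $(A\oplus V,\circledast)$ and to show that, for every value of $\lambda$, it collapses exactly to the relative Rota-Baxter identity (\ref{14}). First I would record the explicit action of $N_T$: writing a generic element of $A\oplus V$ as $x+u$ with $x\in A$ and $u\in V$, the matrix gives $N_T(x+u)=T(u)-\lambda u$, where $T(u)\in A$ and $-\lambda u\in V$. The crucial structural remark is that $N_T$ depends only on the $V$-component of its argument and, in particular, annihilates every element of $A$; this observation will absorb most of the bookkeeping.

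Next, fixing $X=x+u$ and $Y=y+v$, I would expand both sides of (\ref{9}) using the product (\ref{spjj}). For the left-hand side, $N_T(X)\circledast N_T(Y)=(T(u)-\lambda u)\circledast(T(v)-\lambda v)$ unfolds to $T(u)\cdot T(v)-\lambda\bigl(\rho(T(u))v+\mu(T(v))u\bigr)$. For the right-hand side I would first assemble the inner argument
$$W:=N_T(X)\circledast Y+X\circledast N_T(Y)-N_T(X\circledast Y),$$
and split it into its $A$- and $V$-parts. The key computation is that, after the $\lambda$-proportional terms involving $\mu(y)u$ and $\rho(x)v$ cancel, the $V$-component of $W$ simplifies precisely to $\rho(T(u))v+\mu(T(v))u$, while its $A$-component is some element of $A$ that we need not track. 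Since $N_T$ kills the $A$-part, applying it yields $N_T(W)=T\bigl(\rho(T(u))v+\mu(T(v))u\bigr)-\lambda\bigl(\rho(T(u))v+\mu(T(v))u\bigr)$.

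Comparing the two sides, the identical summands $-\lambda\bigl(\rho(T(u))v+\mu(T(v))u\bigr)$ cancel, so that (\ref{9}) for $N_T$ is equivalent to $T(u)\cdot T(v)=T\bigl(\rho(T(u))v+\mu(T(v))u\bigr)$ for all $u,v\in V$, which is exactly (\ref{14})—and this holds independently of $\lambda$ and of the $A$-components $x,y$. Reading the chain of equivalences in both directions establishes the lemma. I expect the main obstacle to be purely organizational rather than conceptual: correctly separating each intermediate expression into its $A$- and $V$-parts under $\circledast$, and verifying that all $\lambda$-dependent and all $x,y$-dependent contributions cancel, leaving only the relative Rota-Baxter condition. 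The cleanest way to control this is to exploit, from the outset, that $N_T$ sees only the $V$-slot, which removes the $A$-parts of $W$ from the final comparison and explains why the auxiliary parameter $\lambda$ plays no role.
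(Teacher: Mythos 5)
Your proof is correct and takes essentially the same route as the paper's: expand both sides of the Nijenhuis identity (\ref{9}) for $N_T$ under the semi-direct product $\circledast$ of (\ref{spjj}), observe that all $\lambda$-dependent terms in the $V$-component of the inner argument cancel (leaving exactly $\rho(Tu)v+\mu(Tv)u$), and conclude that the identity collapses to (\ref{14}) independently of $\lambda$, $x$ and $y$. Your remark that $N_T$ annihilates $A$-components, so the $A$-part of the inner argument need not be tracked, is only a mild streamlining of the same computation the paper carries out explicitly.
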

\begin{proof}
	Let $ X=(x,u) $ and $ Y=(y,v) $ be two elements of $ A\oplus V $. We have\\
	$ \begin{array}{lll}
		N_{T}(X)\circledast N_{T}(Y)-N_{T}\bigg(N_{T}(X)\circledast Y+X \circledast N_{T}(Y)-N_{T}(X\circledast Y)\bigg)=0\\
		\iff \bigg(Tu\cdot Tv,-\lambda\rho(Tu)v-\lambda\mu(Tv)u\bigg)-N_{T}\bigg(x\cdot Tv+Tu\cdot y-T(\rho(x)v+\mu(y)u), \rho(Tu)v\\
		+\mu(Tv)u\bigg)
		=0\\
		\iff\bigg(Tu\cdot Tv,-\lambda\rho(Tu)v-\lambda\mu(Tv)u\bigg)-\bigg(T\bigg(\rho(Tu)v+\mu(Tv)u\bigg),-\lambda\rho(Tu)v\\
		-\lambda\mu(Tv)u\bigg)=0.
	\end{array} $\\
	Therefore,  For any $ \lambda \in \mathbb{K} $, $ T $ is a relative Rota-Baxter operator on $ A $ with respect  to $ (V;\rho,\mu) $ if and only if the linear map $ N_{T}$ is a Nijenhuis operator on the semidirect product left pre-Jacobi-Jordan algebra $ (A\oplus V,\circledast)$.  
\end{proof}
\begin{proposition}\label{Pro12} Let $ (V;\rho,\mu) $ be a representation of a left pre-Jacobi-Jordan algebra $ (A,\cdot) $. Let $ T : V\to A $ be a linear map. Then the following statements are equivalent: 
	\begin{enumerate}
		\item [(i)]$ T $ is a relative Rota-Baxter operator on $ (A,\cdot) $ with respect to $(V;\rho,\mu)$;
		\item [(ii)]$  N_{T}=\begin{pmatrix} 0&T\\ 0&0 \end{pmatrix}  $ is a Nijenhuis operator on the left pre-Jacobi-Jordan algebra $ (A\oplus V,\circledast)$;
		\item [(iii)]$  N_{T}=\begin{pmatrix} 0&T\\ 0&Id_{V}\end{pmatrix}  $ is a Nijenhuis operator on the left pre-Jacobi-Jordan algebra\\
		$ (A\oplus V,\circledast)$.
	\end{enumerate} 
\end{proposition}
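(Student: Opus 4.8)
The plan is to obtain this proposition as an immediate specialization of Lemma \ref{l3}. That lemma already establishes, for \emph{every} scalar $\lambda\in\mathbb{K}$, the equivalence between statement (i) and the assertion that $\begin{pmatrix} 0 & T \\ 0 & -\lambda\,Id_V \end{pmatrix}$ is a Nijenhuis operator on the semi-direct product $(A\oplus V,\circledast)$. Hence all the computational content has effectively been done, and the work reduces to selecting the two relevant values of $\lambda$ and matching matrices.

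First I would take $\lambda=0$ in Lemma \ref{l3}. The lower-right block $-\lambda\,Id_V$ then vanishes, so the block-triangular operator becomes exactly the $N_T=\begin{pmatrix} 0 & T \\ 0 & 0 \end{pmatrix}$ appearing in statement (ii); Lemma \ref{l3} thus yields the equivalence (i) $\Leftrightarrow$ (ii). Next I would take $\lambda=-1$, so that $-\lambda\,Id_V=Id_V$ and the operator is precisely the $N_T=\begin{pmatrix} 0 & T \\ 0 & Id_V \end{pmatrix}$ of statement (iii); Lemma \ref{l3} then gives (i) $\Leftrightarrow$ (iii). Combining these two equivalences shows that (i), (ii), and (iii) are pairwise equivalent, which is the claim.

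There is essentially no obstacle here. The only point worth stating explicitly is that the Nijenhuis condition (\ref{9}) for a block-triangular operator whose sole off-diagonal entry is $T$ unwinds, on the semi-direct product $(A\oplus V,\circledast)$, to the relative Rota-Baxter identity (\ref{14})---and this unwinding was carried out \emph{uniformly in $\lambda$} in the proof of Lemma \ref{l3}. Consequently the present proposition requires no new calculation; it is a matter of recording that the matrices in (ii) and (iii) are the $\lambda=0$ and $\lambda=-1$ instances of the family treated by the lemma. If one preferred a self-contained argument, one could instead recompute $N_T(X)\circledast N_T(Y)-N_T\!\big(N_T(X)\circledast Y+X\circledast N_T(Y)-N_T(X\circledast Y)\big)$ directly for each of the two matrices, but this merely reproduces the lemma's computation and I would avoid the redundancy.
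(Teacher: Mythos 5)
Your proposal is correct and is exactly the paper's argument: the paper also proves Proposition \ref{Pro12} by citing Lemma \ref{l3}, whose computation is uniform in $\lambda$, so that $\lambda=0$ gives (ii) and $\lambda=-1$ gives (iii). You merely spell out the choice of $\lambda$, which the paper leaves implicit.
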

\begin{proof}
	It follows from Lemma \ref{l3}.
\end{proof}
\section{Compatible relative Rota-Baxter operators on pre-jacobi-jardan algebra}
\begin{definition}Let $ (A,\cdot) $ be a left pre-Jacobi-Jordan algebra and $ (V;\rho,\mu) $ be a representation of $A$. Let $ T_{1} $, $ T_{2}:V\to A $ be two relative Rota-Baxter operators with respect to $ (V;\rho,\mu) $. If for all $ k_{1} $, $ k_{2}\in \mathbb{K}\setminus\{0\},$  $k_{1}T_{1}+k_{2}T_{2}  $ is still a relative Rota-Baxter operator with respect to $ (V;\rho,\mu) $, then $ T_{1} $ and $ T_{2} $ are called compatible and we denote $ T_{1} \sim T_{2}  $.     
\end{definition}
\begin{proposition}
	Let $ T_{1} $, $ T_{2}:V\to A $ be two relative Rota-Baxter operators on a left pre-Jacobi-Jordan algebra $ (A,\cdot) $ with respect to a representation $ (V;\rho,\mu) $. Then $ T_{1} $ and $ T_{2} $ are compatible if and only if the following equation holds 
	\begin{eqnarray}
		&& T_{1}(u)\cdot T_{2}(v)+T_{2}(u)\cdot T_{1}(v)\nonumber\\
		&&=T_{1}\bigg(\rho(T_{2}u)v+\mu(T_{2}u)v\bigg)+T_{2}\bigg(\rho(T_{1}u)v+\mu(T_{1}u)v\bigg), \forall u,v\in V. \label{34} 
	\end{eqnarray} 
\end{proposition}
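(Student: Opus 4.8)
The plan is to substitute the operator $T=k_1T_1+k_2T_2$ directly into the defining relation (\ref{14}) and to collect the resulting terms according to the powers of the scalars $k_1,k_2$. First I would expand the left-hand side of (\ref{14}) for $T$ using bilinearity of the product $\cdot$, obtaining
\[
T(u)\cdot T(v) = k_1^2\,(T_1u\cdot T_1v) + k_1k_2\big(T_1u\cdot T_2v + T_2u\cdot T_1v\big) + k_2^2\,(T_2u\cdot T_2v),
\]
and expand the right-hand side using linearity of $\rho$, $\mu$, $T_1$ and $T_2$:
\begin{align*}
T\big(\rho(Tu)v+\mu(Tv)u\big) &= k_1^2\,T_1\big(\rho(T_1u)v+\mu(T_1v)u\big) + k_2^2\,T_2\big(\rho(T_2u)v+\mu(T_2v)u\big)\\
&\quad + k_1k_2\Big(T_1\big(\rho(T_2u)v+\mu(T_2v)u\big) + T_2\big(\rho(T_1u)v+\mu(T_1v)u\big)\Big).
\end{align*}

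Next I would invoke that $T_1$ and $T_2$ are individually relative Rota-Baxter operators. By (\ref{14}) applied to $T_1$, the $k_1^2$-blocks on the two sides coincide, namely $T_1u\cdot T_1v = T_1(\rho(T_1u)v+\mu(T_1v)u)$; symmetrically, by (\ref{14}) applied to $T_2$, the $k_2^2$-blocks coincide. Consequently $k_1T_1+k_2T_2$ satisfies (\ref{14}) if and only if the two coefficients of $k_1k_2$ agree, that is
\[
k_1k_2\big(T_1u\cdot T_2v + T_2u\cdot T_1v\big) = k_1k_2\Big(T_1\big(\rho(T_2u)v+\mu(T_2v)u\big) + T_2\big(\rho(T_1u)v+\mu(T_1v)u\big)\Big).
\]
Since $k_1,k_2\in\mathbb{K}\setminus\{0\}$, the scalar $k_1k_2$ is invertible: cancelling it yields exactly (\ref{34}), which proves the ``only if'' direction, while conversely multiplying (\ref{34}) by $k_1k_2$ restores the cross-term equality for every admissible pair $(k_1,k_2)$, so that $k_1T_1+k_2T_2$ satisfies (\ref{14}). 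Thus both implications follow from the single polarization identity.

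The computation is purely bilinear bookkeeping, so I do not expect a genuine obstacle. The only point deserving care is the correct pairing of the off-diagonal terms: on the left the mixed products are $T_1u\cdot T_2v$ and $T_2u\cdot T_1v$, and these must be matched on the right against $T_1$ applied to the $T_2$-weighted argument $\rho(T_2u)v+\mu(T_2v)u$ and $T_2$ applied to the $T_1$-weighted argument $\rho(T_1u)v+\mu(T_1v)u$ respectively. It is precisely the quadratic-in-the-scalars structure of (\ref{14}) that isolates this compatibility into the single condition (\ref{34}), the diagonal $k_1^2$- and $k_2^2$-contributions being automatically absorbed by the individual Rota-Baxter relations.
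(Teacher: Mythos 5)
Your proof is correct and follows essentially the same route as the paper: expand the Rota-Baxter identity for $k_1T_1+k_2T_2$ bilinearly, absorb the $k_1^2$- and $k_2^2$-blocks via the individual Rota-Baxter relations for $T_1$ and $T_2$, and cancel the nonzero factor $k_1k_2$ to isolate the cross-term condition. Note that, like the paper's own proof, your computation actually yields the condition with arguments $\rho(T_2u)v+\mu(T_2v)u$ and $\rho(T_1u)v+\mu(T_1v)u$, so the form $\mu(T_2u)v$, $\mu(T_1u)v$ appearing in the statement of the proposition is a typographical slip rather than a discrepancy in your argument.
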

\begin{proof} Let $ k_{1},k_{2}\in \mathbb{K}\setminus\{0\} $ and set $ T=k_{1}T_{1}+k_{2}T_{2} $.
	Then, for all  $ u,v\in V $, we have:\\
	$ \begin{array}{lll}
		T_{1}\sim T_{2}\iff T(u)\cdot T(v)=T\bigg(\rho(Tu)v+\mu(Tu)v\bigg)\\
		\iff \bigg(k_{1}T_{1}(u)+k_{2}T_{2}(u)\bigg)\cdot \bigg(k_{1}T_{1}(v)+k_{2}T_{2}(v)\bigg)=T\bigg(k_{1}\rho(T_{1}u)v+k_{2}\rho(T_{2}u)v\\
		+k_{1}\mu(T_{1}u)v+k_{2}\mu(T_{2}u)v\bigg)\\
			\end{array} $\\
	$ \begin{array}{lll}
		\iff k^{2}_{1}T_{1}(u)\cdot T_{1}(v)+k_{1}k_{2}T_{1}(u)\cdot T_{2}(v)+k_{1}k_{2}T_{2}(u)\cdot T_{1}(v)+k^{2}_{2}T_{2}(u)\cdot T_{2}(v)=\\
		T\bigg(k_{1}\bigg(\rho(T_{1}u)v+\mu(T_{1}v)u\bigg)+k_{2}\bigg(\rho(T_{2}u)v+\mu(T_{2}v)u\bigg) \bigg)\\
		\iff k^{2}_{1}T_{1}(u)\cdot T_{1}(v)+k_{1}k_{2}T_{1}(u)\cdot T_{2}(v)+k_{1}k_{2}T_{2}(u)\cdot T_{1}(v)+k^{2}_{2}T_{2}(u)\cdot T_{2}(v)=\\
		k_{1}T_{1}\bigg(k_{1}\bigg(\rho(T_{1}u)v+\mu(T_{1}v)u\bigg)+k_{2}\bigg(\rho(T_{2}u)v+\mu(T_{2}v)u\bigg) \bigg)\\
		+k_{2}T_{2}\bigg(k_{1}\bigg(\rho(T_{1}u)v+\mu(T_{1}v)u\bigg)+k_{2}\bigg(\rho(T_{2}u)v+\mu(T_{2}v)u\bigg) \bigg)\\
		\iff k^{2}_{1}T_{1}(u)\cdot T_{1}(v)+k_{1}k_{2}T_{1}(u)\cdot T_{2}(v)+k_{1}k_{2}T_{2}(u)\cdot T_{1}(v)+k^{2}_{2}T_{2}(u)\cdot T_{2}(v)=\\
		k_{1}^{2}T_{1}\bigg(\rho(T_{1}u)v+\mu(T_{1}v)u\bigg)
		
		+k_{1}k_{2}T_{1}\bigg(\rho(T_{2}u)v+\mu(T_{2}v)u\bigg)\\
		+k_{2}k_{1}T_{2}\bigg(\rho(T_{1}u)v+\mu(T_{1}v)u\bigg)
		+k_{2}k_{1}T_{2}\bigg(\rho(T_{2}u)v+\mu(T_{2}v)u\bigg)\\
		\iff k_{1}k_{2}\bigg(T_{1}(u)\cdot T_{2}(v)+T_{2}(u)\cdot T_{1}(v)\bigg)=\\
		k_{2}k_{1}\bigg(T_{2}\bigg(\rho(T_{1}u)v+\mu(T_{1}v)u\bigg)+T_{1}\bigg(\rho(T_{2}u)v+\mu(T_{2}v)u\bigg)\bigg)
		 \nonumber\\
		( \text{ since } T_{1} \text{ and }T_{2} \text{ are relative  Rota-Baxter operators} )\\
		\iff T_{1}(u)\cdot T_{2}(v)+T_{2}(u)\cdot T_{1}(v)=
		T_{2}\bigg(\rho(T_{1}u)v+\mu(T_{1}v)u\bigg)+T_{1}\bigg(\rho(T_{2}u)v+\mu(T_{2}v)u\bigg).
	\end{array} $\\
\end{proof}
\begin{remark}
	Eq.(\ref{22}) means that $ J $ and $ T $ are compatible. 
\end{remark}
\begin{proposition}\label{Pro3}
	Let  $ T :V\to A $ be a relative Rota-Baxter operator on a left pre-Jacobi-Jordan algebra $ (A,\cdot) $ with respect to a representation $ (V;\rho,\mu) $ and $ N: A\to A $ be a Nijenhuis operator  on $ (A,\cdot) $. Then $ NT $ is a relative Rota-Baxter operator on a left pre-Jacobi-Jordan algebra $ (A,\cdot) $ with respect to the representation $ (V;\rho,\mu) $ if and only if the following equation holds
	\begin{eqnarray}
		\label{35}
		\begin{array}{lll}
			&&N\bigg((NT)u\cdot T(v)+T(u)\cdot (NT)(v) \bigg)\\
			&&=N\bigg(T\bigg(\rho(NTu)v+\mu(NTv)u\bigg)\bigg)
			+N\bigg(NT\bigg(\rho(Tu)v+\mu(Tv)u\bigg)\bigg),\,\ 
			\forall  u,v\in V.
		\end{array}
	\end{eqnarray}
\end{proposition}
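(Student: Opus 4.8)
The plan is to unwind the defining identity for $NT$ being a relative Rota-Baxter operator and show that it collapses directly to (\ref{35}), using nothing more than the Nijenhuis identity for $N$ and the relative Rota-Baxter identity for $T$. By Definition \ref{D8}, the composition $NT=N\circ T$ is a relative Rota-Baxter operator with respect to $(V;\rho,\mu)$ precisely when, for all $u,v\in V$,
$$(NT)(u)\cdot (NT)(v) = (NT)\big(\rho((NT)u)v + \mu((NT)v)u\big).$$
First I would rewrite the left-hand side $N(Tu)\cdot N(Tv)$ by applying the Nijenhuis identity (\ref{9}) with $x=Tu$ and $y=Tv$, which yields
$$N(Tu)\cdot N(Tv) = N\big(N(Tu)\cdot Tv + Tu\cdot N(Tv) - N(Tu\cdot Tv)\big).$$

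Next I would invoke the hypothesis that $T$ is itself a relative Rota-Baxter operator, so that by (\ref{14}) one has $Tu\cdot Tv = T\big(\rho(Tu)v + \mu(Tv)u\big)$. Substituting this into the term $N(Tu\cdot Tv)$ and using the linearity of $N$ together with the notation $NT=N\circ T$, the left-hand side becomes
$$N\big((NT)u\cdot Tv + Tu\cdot (NT)v\big) - (NT)\big(\rho(Tu)v + \mu(Tv)u\big).$$
On the right-hand side, I would simply expand $(NT)\big(\rho((NT)u)v + \mu((NT)v)u\big) = N\big(T(\rho(NTu)v + \mu(NTv)u)\big)$, observing that $\rho((NT)u)=\rho(NTu)$ and similarly for $\mu$.

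Equating these two expressions and transposing the Nijenhuis correction term $(NT)\big(\rho(Tu)v + \mu(Tv)u\big) = N\big(NT(\rho(Tu)v + \mu(Tv)u)\big)$ to the other side produces exactly (\ref{35}); since each step is a reversible equality, reading the chain backwards yields the converse implication. I expect no serious obstacle here, as the argument is a single substitution once the Nijenhuis identity has been applied to $N(Tu)\cdot N(Tv)$. The only point demanding care is the clean bookkeeping of the three $N$-images and the key observation that the term $N(Tu\cdot Tv)$ collapses through the relative Rota-Baxter identity for $T$, which is precisely what makes the Nijenhuis correction term reproduce the second summand on the right-hand side of (\ref{35}).
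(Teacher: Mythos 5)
Your proposal is correct in substance and follows essentially the same route as the paper's own proof: expand $N(Tu)\cdot N(Tv)$ with the Nijenhuis identity (\ref{9}), use linearity of $N$, rewrite the correction term through the relative Rota-Baxter identity (\ref{14}) for $T$, and transpose; every step is an equivalence, so both directions come for free. One bookkeeping slip should be fixed: the expanded correction term is $N\big(N(Tu\cdot Tv)\big)=N\Big(NT\big(\rho(Tu)v+\mu(Tv)u\big)\Big)$, i.e.\ it carries \emph{two} applications of $N$, so your displayed left-hand side should end in $-\,N\Big(NT\big(\rho(Tu)v+\mu(Tv)u\big)\Big)$ rather than $-\,(NT)\big(\rho(Tu)v+\mu(Tv)u\big)$, and the inline equation $(NT)\big(\rho(Tu)v+\mu(Tv)u\big)=N\big(NT(\rho(Tu)v+\mu(Tv)u)\big)$ you invoke when transposing is false as written (it conflates $N\circ T$ with $N^{2}\circ T$). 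With that missing $N$ restored, the transposition yields (\ref{35}) exactly as you intend, matching the paper's argument line for line.
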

\begin{proof}
	Let $ u,v\in V $. We have\\
	$ \begin{array}{lll}
		\text{$NT$ is  a relative Rota-Baxter operator }\iff NT(u)\cdot NT(v)=NT\bigg(\rho(NT(u))v+\mu(NT(v))u\bigg)\\
		\iff N\bigg(NT(u)\cdot T(v)+T(u)\cdot NT(v)-N(Tu\cdot Tv)\bigg)\\
		=N\bigg(T\bigg(\rho(NT(u))v+\mu(NT(v))u\bigg)\bigg)
		
		\mbox{ ( since $N$ is a Nijenhuis operator )}\\
		\iff N\bigg(NT(u)\cdot T(v)+T(u)\cdot NT(v)\bigg)-N^{2}(Tu\cdot Tv)=N\bigg(T\bigg(\rho(NT(u))v+\mu(NT(v))u\bigg)\bigg)\\
		\iff N\bigg(NT(u)\cdot T(v)+T(u)\cdot NT(v)\bigg)-N\bigg(NT\bigg(\rho(Tu)v+\mu(Tv)u\bigg)\bigg)=\\
			\end{array} $\\
	$ \begin{array}{lll}
		N\bigg(T\bigg(\rho(NT(u))v+\mu(NT(v))u\bigg)\bigg)
		\mbox{  ( since $T$ is a relative Rota-Baxter operator )}
		\\
		\iff(\ref{35}).
	\end{array} $\\
\end{proof}
\begin{corollary}
	Let  $ T :V\to A $ be a relative Rota-Baxter operator on a left pre-Jacobi-Jordan algebra $ (A,\cdot) $ with respect to a representation $ (V;\rho,\mu) $ and $ N: A\to A $ be a Nijenhuis operator  on $ (A,\cdot) $.
	If $ N $ is inversible, then $ T $ and $ NT $ are compatible.
\end{corollary}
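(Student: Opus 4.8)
The plan is to derive the statement directly from Proposition \ref{Pro3} by using the invertibility of $N$ to strip off an outermost factor of $N$ from the defining identity (\ref{35}). First I would recall that, by Proposition \ref{Pro3}, the composite $NT$ is a relative Rota-Baxter operator on $(A,\cdot)$ with respect to $(V;\rho,\mu)$ precisely when (\ref{35}) holds, and I would observe that every summand on both sides of (\ref{35}) carries an outermost application of $N$. Grouping the two terms on the right-hand side, (\ref{35}) reads
\begin{multline*}
N\Big((NT)u\cdot T(v)+T(u)\cdot(NT)(v)\Big)\\
=N\Big(T(\rho(NTu)v+\mu(NTv)u)+NT(\rho(Tu)v+\mu(Tv)u)\Big).
\end{multline*}

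Next, using that $N$ is invertible, I would apply $N^{-1}$ to both sides and cancel the outer $N$, reducing (\ref{35}) to the equivalent identity
\begin{multline*}
(NT)u\cdot T(v)+T(u)\cdot(NT)(v)\\
=T(\rho(NTu)v+\mu(NTv)u)+NT(\rho(Tu)v+\mu(Tv)u).
\end{multline*}
The key point is that this is exactly the compatibility identity (\ref{34}) specialized to the pair $T_{1}=T$ and $T_{2}=NT$: after reordering the (commutative) sum on the left, and reading the right-hand side term by term, the two equations coincide. Hence, once $N$ is invertible, the condition that $NT$ be a relative Rota-Baxter operator and the condition that $T$ and $NT$ be compatible are governed by one and the same equation, and I would invoke the characterization of compatibility via (\ref{34}) to conclude that $T$ and $NT$ are compatible.

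I expect the essential content, and the only place the hypothesis is used, to be the single cancellation of the invertible $N$; everything else is the bookkeeping of checking that the cancelled form of (\ref{35}) matches (\ref{34}) under the substitution $T_{1}=T$, $T_{2}=NT$, taking care that the $\rho$- and $\mu$-contributions align. The main (and rather modest) subtlety is logical rather than computational: compatibility in the sense of the definition already presupposes that $NT$ is itself a relative Rota-Baxter operator, which is precisely (\ref{35}); so what invertibility of $N$ genuinely provides is the equivalence between "$NT$ is a relative Rota-Baxter operator" and the compatibility relation (\ref{34}), from which the desired conclusion is immediate.
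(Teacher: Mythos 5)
Your proposal is correct and is essentially the paper's own argument: the paper's entire proof is the one-liner ``It follows from Proposition \ref{Pro3}'', and what you spell out --- applying $N^{-1}$ to both sides of (\ref{35}) and recognizing the resulting identity as the compatibility condition (\ref{34}) specialized to $T_{1}=T$, $T_{2}=NT$ --- is precisely the intended content of that one-liner. Your closing observation, that compatibility already presupposes $NT$ is itself a relative Rota-Baxter operator (so that invertibility of $N$ supplies the equivalence of this presupposition with (\ref{34}), whence compatibility is immediate), is if anything more careful than the paper, which passes over this logical point in silence.
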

\begin{proof}
	It follows from Proposition \ref{Pro3}.
\end{proof}
A pair of compatible relative Rota-Baxter operators can also give rise to a Nijenhuis operator under some conditions.
\begin{proposition}
	Let  $ T_{1},T_{2} :V\to A $ be relative Rota-Baxter operators on a left pre-Jacobi-Jordan algebra $ (A,\cdot) $ with respect to a representation $ (V;\rho,\mu) $ and $ N: A\to A $ be a linear map.
	Suppose that $ T_{2} $ is invertible and $T_{1}=N\circ T_{2}  $.
	If $ T_{1} $ and $ T_{2} $ are compatible, then $ N $ is a Nijenhuis operator on the left pre-Jacobi-Jordan algebra $ (A,\cdot). $
\end{proposition}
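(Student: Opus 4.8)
The plan is to exploit the invertibility of $T_2$ to parametrize arbitrary elements of $A$ by elements of $V$. Since $T_2:V\to A$ is invertible and $T_1=N\circ T_2$, we have $N=T_1\circ T_2^{-1}$; in particular $N(T_2(w))=T_1(w)$ for every $w\in V$. Given $x,y\in A$, I would set $u:=T_2^{-1}(x)$ and $v:=T_2^{-1}(y)$, so that $x=T_2(u)$, $y=T_2(v)$, $N(x)=T_1(u)$ and $N(y)=T_1(v)$. The goal is then to verify the Nijenhuis identity (\ref{9}) for this pair $x,y$.

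First I would compute the left-hand side $N(x)\cdot N(y)=T_1(u)\cdot T_1(v)$ and apply the relative Rota-Baxter identity (\ref{14}) for $T_1$ to obtain $T_1(u)\cdot T_1(v)=T_1\bigl(\rho(T_1u)v+\mu(T_1v)u\bigr)$. Next I would handle the three terms on the right-hand side of (\ref{9}). For $N(x\cdot y)$, I would use the relative Rota-Baxter identity (\ref{14}) for $T_2$ to write $x\cdot y=T_2(u)\cdot T_2(v)=T_2\bigl(\rho(T_2u)v+\mu(T_2v)u\bigr)$, and then apply $N\circ T_2=T_1$ to get $N(x\cdot y)=T_1\bigl(\rho(T_2u)v+\mu(T_2v)u\bigr)$. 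For the remaining two terms I observe that $N(x)\cdot y+x\cdot N(y)=T_1(u)\cdot T_2(v)+T_2(u)\cdot T_1(v)$, which is exactly the left-hand side of the compatibility relation (\ref{34}); applying compatibility rewrites this sum as $T_1\bigl(\rho(T_2u)v+\mu(T_2v)u\bigr)+T_2\bigl(\rho(T_1u)v+\mu(T_1v)u\bigr)$.

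Combining these computations, the argument $N(x)\cdot y+x\cdot N(y)-N(x\cdot y)$ telescopes: the two copies of $T_1\bigl(\rho(T_2u)v+\mu(T_2v)u\bigr)$ cancel, leaving just $T_2\bigl(\rho(T_1u)v+\mu(T_1v)u\bigr)$. Applying $N$ and using $N\circ T_2=T_1$ once more yields $T_1\bigl(\rho(T_1u)v+\mu(T_1v)u\bigr)$, which coincides with the left-hand side computed in the first step. Hence (\ref{9}) holds for all $x,y\in A$, and $N$ is a Nijenhuis operator on $(A,\cdot)$.

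I expect no serious obstacle here: the whole argument is a substitution driven entirely by the invertibility of $T_2$ together with the defining relation $N\circ T_2=T_1$. The single point requiring care is the bookkeeping of the $\rho$ and $\mu$ arguments, and in particular using the compatibility relation in the corrected form $T_1(u)\cdot T_2(v)+T_2(u)\cdot T_1(v)=T_1\bigl(\rho(T_2u)v+\mu(T_2v)u\bigr)+T_2\bigl(\rho(T_1u)v+\mu(T_1v)u\bigr)$ as derived at the end of the proof of (\ref{34}), rather than the mistyped display, so that the cancellation of the $T_1$-terms takes place exactly as intended.
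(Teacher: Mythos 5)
Your proof is correct and follows essentially the same route as the paper's: parametrize $x=T_2(u)$, $y=T_2(v)$ via the invertibility of $T_2$, use the compatibility relation together with the relative Rota-Baxter identities for $T_1$ and $T_2$ to reduce $N(x)\cdot y+x\cdot N(y)-N(x\cdot y)$ to $T_2\bigl(\rho(T_1u)v+\mu(T_1v)u\bigr)$, then apply $N\circ T_2=T_1$ and the identity for $T_1$ to recover $N(x)\cdot N(y)$. Your remark about using the compatibility relation in the corrected form (as derived at the end of the paper's proof of that proposition, not the mistyped display (\ref{34})) is exactly what the paper's own argument implicitly does.
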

\begin{proof}
	Let $ x,y\in A $, then there exist $ u,v\in V $ such that $ T_{2}(u)=x $ and $ T_{2}(v)=y $.\\
	$ \begin{array}{lll}
		T_{1}  \text{ and } T_{2} \text{ are compatible}\iff N\circ T_{2}  \text{ and } T_{2} \text{ are compatible}\\
		\iff NT_{2}(u)\cdot T_{2}(v)+T_{2}(u)\cdot NT_{2}(v)=NT_{2}\bigg(\rho(T_{2}u)v+\mu(T_{2}v)u\bigg)\\
		+T_{2}\bigg(\rho(NT_{2}u)v+\mu(NT_{2}v)u\bigg)\\
		\iff N(x)\cdot y+x\cdot N(y)=N\bigg(T_{2}(u)\cdot T_{2}(v)\bigg)+T_{2}\bigg(\rho(T_{1}u)v+\mu(T_{1}v)u\bigg)\\
		\iff N(x)\cdot y+x\cdot N(y)=N(x\cdot y)+T_{2}\bigg(\rho(T_{1}u)v+\mu(T_{1}v)u\bigg)\\
		\iff  N(x)\cdot y+x\cdot N(y)-N(x\cdot y)=T_{2}\bigg(\rho(T_{1}u)v+\mu(T_{1}v)u\bigg)\\
		\implies N\bigg( N(x)\cdot y+x\cdot N(y)-N(x\cdot y)\bigg)=NT_{2}\bigg(\rho(T_{1}u)v+\mu(T_{1}v)u\bigg)\\
		\implies N\bigg( N(x)\cdot y+x\cdot N(y)-N(x\cdot y)\bigg)=T_{1}\bigg(\rho(T_{1}u)v+\mu(T_{1}v)u\bigg)\\
		\implies N\bigg( N(x)\cdot y+x\cdot N(y)-N(x\cdot y)\bigg)=T_{1}(u)\cdot T_{1}(v)\\
		\implies N\bigg( N(x)\cdot y+x\cdot N(y)-N(x\cdot y)\bigg)=NT_{2}(u)\cdot NT_{2}(v)\\
		\implies N\bigg( N(x)\cdot y+x\cdot N(y)-N(x\cdot y)\bigg)=N(x)\cdot N(y)\\
		\implies \text{N is a Nijenhuis operator on the left pre-Jacobi-Jordan.}
	\end{array} $ \\
\end{proof}

	%\vspace*{1cm}
	%Sylvain Attan\\
%	D\'epartement de Math\'ematiques, Universit\'{e} d'Abomey-Calavi
	%01 BP 4521, Cotonou 01, B\'enin.\\ Email: syltane2010@yahoo.fr\\
	%	\\
	%Nabil ORO DJIBRIL\\	
	%	Institut de Mathématiques et de Sciences Physiques, Université d'Abomey-Calavi,
	%	01 BP 613-Oganla, Porto-Novo, Bénin\\
	%	Email: 66073833orodjibrilnabil@gmail.com
\end{document}